\begin{document}

\newcommand{\INVISIBLE}[1]{}


\newtheorem{thm}{Theorem}[section]

\newtheorem{lem}[thm]{Lemma}
\newtheorem{cor}[thm]{Corollary}
\newtheorem{prp}[thm]{Proposition}

\theoremstyle{remark}
\newtheorem{rmk}[thm]{Remark}
\newtheorem{ex}[thm]{Example}

\theoremstyle{definition}
\newtheorem{dfn}[thm]{Definition}
\newtheorem{notation}[thm]{Notation}
\newtheorem{question}[thm]{Question}
\newtheorem{convention}[thm]{Convention}

\newcommand{\aro}{\longrightarrow}
\newcommand{\arou}[1]{\stackrel{#1}{\longrightarrow}}
\newcommand{\linto}{\lhook\joinrel\longrightarrow}
\newcommand{\LR}{\Leftrightarrow}
\newcommand{\RA}{\Longrightarrow}

\newcommand{\mm}[1]{\mathrm{#1}}
\newcommand{\bm}[1]{\boldsymbol{#1}}
\newcommand{\bb}[1]{\mathbf{#1}}

\newcommand{\bA}{\boldsymbol A}
\newcommand{\bB}{\boldsymbol B}
\newcommand{\bC}{\boldsymbol C}
\newcommand{\bD}{\boldsymbol D}
\newcommand{\bE}{\boldsymbol E}
\newcommand{\bF}{\boldsymbol F}
\newcommand{\bG}{\boldsymbol G}
\newcommand{\bH}{\boldsymbol H}
\newcommand{\bI}{\boldsymbol I}
\newcommand{\bJ}{\boldsymbol J}
\newcommand{\bK}{\boldsymbol K}
\newcommand{\bL}{\boldsymbol L}
\newcommand{\bM}{\boldsymbol M}
\newcommand{\bN}{\boldsymbol N}
\newcommand{\bO}{\boldsymbol O}
\newcommand{\bP}{\boldsymbol P}
\newcommand{\bY}{\boldsymbol Y}
\newcommand{\bS}{\boldsymbol S}
\newcommand{\bX}{\boldsymbol X}
\newcommand{\bZ}{\boldsymbol Z}

\newcommand{\ccc}[1]{\mathcal{#1}}
\newcommand{\cc}[1]{\mathscr{#1}}

\newcommand{\ca}{\cc{A}}

\newcommand{\cb}{\cc{B}}

\newcommand{\cC}{\cc{C}}

\newcommand{\cd}{\cc{D}}

\newcommand{\ce}{\cc{E}}

\newcommand{\cf}{\cc{F}}

\newcommand{\cg}{\cc{G}}

\newcommand{\ch}{\cc{H}}

\newcommand{\ci}{\cc{I}}

\newcommand{\cj}{\cc{J}}

\newcommand{\ck}{\cc{K}}

\newcommand{\cl}{\cc{L}}

\newcommand{\cm}{\cc{M}}

\newcommand{\cn}{\cc{N}}

\newcommand{\co}{\cc{O}}

\newcommand{\cp}{\cc{P}}

\newcommand{\cq}{\cc{Q}}

\newcommand{\cR}{\cc{R}}

\newcommand{\cs}{\cc{S}}

\newcommand{\ct}{\cc{T}}

\newcommand{\cu}{\cc{U}}

\newcommand{\cv}{\cc{V}}

\newcommand{\cy}{\cc{Y}}

\newcommand{\cw}{\cc{W}}

\newcommand{\cz}{\cc{Z}}

\newcommand{\cx}{\cc{X}}

\newcommand{\g}[1]{\mathfrak{#1}}

\newcommand{\af}{\mathds{A}}
\newcommand{\PP}{\mathds{P}}

\newcommand{\GL}{\mathrm{GL}}
\newcommand{\PGL}{\mathrm{PGL}}
\newcommand{\SL}{\mathrm{SL}}
\newcommand{\NN}{\mathds{N}}
\newcommand{\ZZ}{\mathds{Z}}
\newcommand{\CC}{\mathds{C}}
\newcommand{\QQ}{\mathds{Q}}
\newcommand{\RR}{\mathds{R}}
\newcommand{\FF}{\mathds{F}}
\newcommand{\DD}{\mathbf{D}}
\newcommand{\VV}{\mathds{V}}
\newcommand{\HH}{\mathds{H}}
\newcommand{\MM}{\mathds{M}}
\newcommand{\OO}{\mathds{O}}
\newcommand{\LL}{\mathds L}
\newcommand{\BB}{\mathds B}
\newcommand{\kk}{\mathds k}
\newcommand{\bs}{\mathbf S}
\newcommand{\GG}{\mathds G}

\newcommand{\al}{\alpha}

\newcommand{\be}{\beta}

\newcommand{\ga}{\gamma}
\newcommand{\Ga}{\Gamma}

\newcommand{\om}{\omega}
\newcommand{\Om}{\Omega}

\newcommand{\vte}{\vartheta}
\newcommand{\te}{\theta}
\newcommand{\Te}{\Theta}

\newcommand{\ph}{\varphi}
\newcommand{\Ph}{\Phi}

\newcommand{\ps}{\psi}
\newcommand{\Ps}{\Psi}

\newcommand{\ep}{\varepsilon}

\newcommand{\vr}{\varrho}

\newcommand{\de}{\delta}
\newcommand{\De}{\Delta}

\newcommand{\la}{\lambda}
\newcommand{\La}{\Lambda}

\newcommand{\ka}{\kappa}

\newcommand{\si}{\sigma}
\newcommand{\Si}{\Sigma}

\newcommand{\ze}{\zeta}

\newcommand{\red}{{\rm red}}
\newcommand{\fr}[2]{\frac{#1}{#2}}
\newcommand{\vs}{\vspace{0.3cm}}
\newcommand{\na}{\nabla}
\newcommand{\pd}{\partial}
\newcommand{\po}{\cdot}
\newcommand{\met}[2]{\left\langle #1, #2 \right\rangle}
\newcommand{\ev}{{\rm ev}}

\newcommand{\pos}[2]{#1\llbracket#2\rrbracket}

\newcommand{\cpos}[2]{#1\langle#2\rangle}

\newcommand{\id}{\mathrm{id}}

\newcommand{\unity}{\mathds 1}

\newcommand{\ti}{\times}
\newcommand{\tiu}[1]{\underset{#1}{\times}}

\newcommand{\ot}{\otimes}
\newcommand{\otu}[1]{\underset{#1}{\otimes}}

\newcommand{\wh}{\widehat}
\newcommand{\wt}{\widetilde}
\newcommand{\ov}[1]{\overline{#1}}
\newcommand{\un}[1]{\underline{#1}}

\newcommand{\op}{\oplus}

\newcommand{\lid}{\varinjlim}
\newcommand{\lip}{\varprojlim}

\newcommand{\rep}[2]{\mm{Rep}_{#1}(#2)}
\newcommand{\repo}[2]{\mm{Rep}^\circ_{#1}(#2)}
\newcommand{\modules}[1]{#1\text{-}\mathbf{mod}}
\newcommand{\Modules}[1]{#1\text{-}\mathbf{Mod}}
\newcommand{\dmod}[1]{\mathcal{D}(#1)\text{-}{\bf mod}}

\newcommand{\hh}[3]{\mm{Hom}_{#1}\left(#2,#3\right)}
\newcommand{\ee}[2]{\mm{End}_{#1}(#2)}

\newcommand{\spc}{\mathrm{Spec}\,}
\newcommand{\spf}{\mathrm{Spf}\,}

\newcommand{\ega}[3]{[EGA $\textsc{#1}_{#2}$, #3]}

\newcommand{\asts}{\vspace{.2cm}\begin{center}***\end{center}\vspace{.2cm}}

\title[Finite torsors]{Finite torsors on projective schemes defined over a discrete valuation ring
}
\date{\today}

\author[P. H. Hai]{Ph\`ung H\^o Hai}

\address{Institute of Mathematics, Vietnam Academy of Science and Technology, Hanoi, 
Vietnam}

\email{phung@math.ac.vn}

\author[J. P. dos Santos]{Jo\~ao Pedro  dos Santos}

\address{Institut de Math\'ematiques de Jussieu -- Paris Rive Gauche, 4 place Jussieu, 
Case 247, 75252 Paris Cedex 5, France}

\email{joao\_pedro.dos\_santos@yahoo.com}

\subjclass[2010]{14F10,	14L15 }

\begin{abstract}
Given a Henselian and Japanese discrete valuation ring $A$ and a flat and projective $A$-scheme $X$, we follow the approach of \cite{biswas-dos_santos11} to introduce  a {\it full subcategory} of coherent modules on $X$ which is then shown to be Tannakian. We then prove that, under normality of the generic fibre, the associated affine and flat group is pro-finite in a strong sense (so that its ring of functions is a Mittag-Leffler $A$-module) and that it classifies finite torsors $Q\to X$. This establishes an analogy to Nori's theory of the essentially finite fundamental group.  In addition, we compare our theory with the ones recently developed by Mehta-Subramanian and Antei-Emsalem-Gasbarri. Using the comparison with the former, we show that any quasi-finite torsor $Q\to X$ has a reduction of structure group to a finite one.  
\end{abstract}

\maketitle





\section{Introduction}

Let $A$ be a discrete valuation ring and $X$ a projective flat $A$-scheme carrying an $A$-point $x_0$. In recent times, a certain number of mathematicians has proposed    constructions of an affine and flat group scheme $\Pi(X,x_0)$ over $A$ with the distinctive property that morphisms to finite and flat group schemes $\Pi(X,x_0)\to G$ should canonically correspond to pointed $G$-torsors over $X$. See \cite{gasbarri03}, \cite{antei-emsalem-gasbarri18} and \cite{mehta-subramanian13}. 
These theories are of course the analogues of  Nori's   \cite{nori}---developed in the case of a base {\it field}---and as such, might be treated by similar mechanisms. Let us recall that Nori's theory can be developed through
\begin{enumerate}\item[({\bf SS})] a Tannakian category of semi-stable vector bundles on $X$ \cite{nori}, through
\item[({\bf F})] the construction of fibre products of torsors   \cite[Chapter 2]{nori82}, 
or through 
\item[({\bf T})] a Tannakian category of vector bundles which are trivialized by proper and surjective morphisms   \cite{biswas-dos_santos11}.
\end{enumerate} The works \cite{gasbarri03} and \cite{antei-emsalem-gasbarri18} adopt point of view $({\bf F})$, whereas \cite{mehta-subramanian13} opts for a variant of ({\bf SS}). In the present paper we focus   on  ({\bf T}) to construct the affine and flat group scheme $\Pi(X,x_0)$.
The advantage  of this approach is that we can realise $\rep {A}{\Pi(X,x_0)}$ as a {\it full} abelian subcategory of $\bb{coh}(X)$ so that, not only we can construct the affine group scheme   parametrising torsors, but we can naturally regard its category of representations in geometric terms. This facet is missing in \cite{antei-emsalem-gasbarri18} (since it is not understood what coherent modules are obtained by twisting representations via the fundamental torsor) as well as in \cite{mehta-subramanian13} (since the authors there focus solely on representations on free $A$-modules).  
On the downside, it is true that if no condition on the singularities of $X$  is imposed,  
then our approach ends up containing {\it too much} geometric information in the sense that it may account for torsors which are not necessarily finite over $X$; this is well understood in the case of non-normal schemes  \cite[Example 6]{biswas-dos_santos12}.  

Let us now review the remaining sections of the paper. Section \ref{subsidiary_material} serves to gather some simple facts from   algebraic  geometry (Lemmas \ref{03.07.2018--1}, \ref{03.07.2018--2} and \ref{03.05.2018--1}), to fix   notation for a class of morphisms which is used allover in the paper (see Definition \ref{26.06.2018--1}) and to   put forward  criteria allowing to decide when a morphism of group schemes is faithfully flat (Lemma \ref{29.11.2018--1} and Lemma \ref{21.03.2019--1}). 

It is in Section \ref{14.04.2018--3} where our theory starts to take form. In it, we introduce the central notion of modules which are {\it relatively trivial} (Definition \ref{20.02.2019--3}) and the category of modules which are relatively trivialized by a proper and surjective morphism (Definition \ref{05.09.2018--3}). The first useful properties of the category of such modules are also developed in Section \ref{14.04.2018--3}:  
we show how to find more convenient trivializations (see Lemma \ref{20.02.2019--4}) and how to make the first steps towards controlling kernels and cokernels of morphisms between relatively trivial modules (Proposition \ref{05.04.2018--3}). 

In Section \ref{04.07.2018--1}, the work initiated in Section \ref{14.04.2018--3} is taken further ahead and we show that---under mild assumptions on the base scheme $X$---the category of modules which become relatively trivial after being pulled back along a suitable  morphism $\ph:Y\to X$, call it $\g T_\ph$,  is in fact abelian, see Theorem \ref{06.04.2018--1}. In addition, concentrating on the objects in $\g T_\ph$ which can be ``dominated'' by locally free ones and using the fibre at an $A$-point $x_0$ of $X$, we   construct a neutral Tannakian category (terminology is that of \cite[Definition 1.2.5]{duong-hai18}) over $A$. We consequently obtain an affine and flat group scheme $\Pi(X,\ph,x_0)$ whose category of representations is naturally a full subcategory of $\bb{coh}(X)$, see Definition \ref{14.01.2019--2}. 

If $K$ stands for the field of fractions of $A$ and $X\ot_AK$ is normal, Section \ref{11.04.2019--4} explains how to obtain information about $\Pi(X,\ph,x_0)\ot_AK$ from the category of essentially finite vector bundles (as defined \cite{nori}) on $X\ot_AK$. See Theorem \ref{11.04.2019--2} and Corollary \ref{16.04.2019--1}. 

Section \ref{14.03.2019--1} studies   $\g T_\ph$   by means of an idea introduced in \cite{hai-dos_santos18}. In  this work, we found a property of representations of a flat and affine group scheme $G$ over a complete $A$, called prudence, which allows us to verify when the $A$-module of functions  $A[G]$ is   free. This is particularly pleasing for the present theory since, on the geometric side, prudence is roughly Grothendieck's algebraization. This  fact   allows us to show that the full Galois groups constructed from $\g T_\ph$  are in many cases {\it finite} group schemes, see Corollary \ref{25.06.2018--2}. 
In doing so, we provide an argument to substantiate a claim made in Lemma 3.1 and Theorem 4.1 of \cite{mehta-subramanian13}. (See also our summary of Section \ref{06.03.2019--1} below.)

Section \ref{RFT}  recalls the reduced fibre theorem and puts it   in a way which can  conveniently be employed in the rest of the work, see Theorem \ref{reduced_fibre_theorem}.   
Note that, this powerful theorem comes with the (mild) hypothesis that a certain generic fibre should be geometrically reduced; since we wish to profit  from this result even in the absence of such an assumption, we benefit from this accessory section to explain how to get rid of schemes which fail to be geometrically reduced, see Lemma \ref{25.10.2018--1}. It is perhaps worth pointing out that it is at this point that  additional  properties of   $A$---that it be Henselian and Japanese---start being required.

Recall that in Section \ref{04.07.2018--1} we associated to a   morphism $\ph:Y\to X$ (satisfying certain assumptions) tensor categories $\g T_\ph$ which, once polished, become neutral Tannakian ones. 
In Section \ref{FGS} we put these categories together by employing the results of Section \ref{RFT}, this is made precise by   Theorem \ref{05.11.2018--1}. We are then able to construct  a   neutral Tannakian category $\g T_X$  which is a full subcategory of $\bb{coh}(X)$, see Definition \ref{30.10.2018--1}. The group scheme associated to $\g T_X$ via the fibre functor $\bullet|_{x_0}:\g T_X\to\modules A$, call it $\Pi(X,x_0)$,   has the distinctive property that its category of representations $\rep A{\Pi(X,x_0)}$ is equivalent to the full subcategory $\g T_X$  of $\bb{coh}(X)$. Under the assumption of normality, we are then able to show that $\Pi(X,x_0)$ is pro-quasi-finite (in the sense of Definition \ref{25.02.2019--1}), see Theorem \ref{15.03.2019--1}. If $A$ is in addition complete,  the results of Section \ref{14.03.2019--1} apply directly and we obtain that  $\Pi(X,x_0)$ is {\it strictly} pro-finite (Theorem \ref{15.03.2019--1}). Finally, an indirect argument  allows us to deduce from this that $\Pi(X,x_0)$ is {\it strictly} pro-finite even if $A$ fails to be complete: see Theorem \ref{25.03.2019--1}. As a consequence, the ring of functions of $\Pi(X,x_0)$ is a Mittag-Leffler $A$-module (Corollary \ref{16.04.2019--2}). 

In Section \ref{20.03.2019--1} we make a brief digression to exhibit   examples and applications of the preceding theory to the case of invertible sheaves, see Proposition \ref{11.01.2019--1}. These examples were a constant source of guidance while elaborating our rhetoric and we believe they shall be useful to the reader.

Section \ref{15.03.2019--2} sets out to give conditions for a finite group scheme---appearing as the structure group of a torsor over $X$---to be a quotient of $\Pi(X,x_0)$, see Proposition \ref{29.11.2018--2} and Corollary \ref{27.03.2019--1}.

In Section \ref{06.03.2019--1} we review parts of  \cite{mehta-subramanian13}. Let us explain the reason for offering such a revision before summarising the contents of Section \ref{06.03.2019--1}. 
If $X_0$ stands for the special fibre of $X$, one of the main points of \cite{mehta-subramanian13} is  to use a technique in \cite{deninger-werner05}, see pages 574 to 576 there, to show, speaking colloquially, how to trivialise a vector bundle  $E\in\bb{VB}(X_0)$ on the special  fibre of a {\it flat} $A$-scheme $Y$ endowed with a finite morphism to $X$, see the {\it proof} of  Lemma 3.1       in \cite{mehta-subramanian13}. In our opinion, \cite{mehta-subramanian13} only offers a sketch of how to implement this  beautiful idea: in the proof of \cite[Lemma 3.1]{mehta-subramanian13} the necessary ``base-change'' to an extension of $A$ is not mentioned, neither is  \cite[Theorem 17]{deninger-werner05}. In addition, as far as we can see, the proof of Theorem 4.1 in \cite{mehta-subramanian13}---which should extend Lemma 3.1 of op. cit.---offers solely a construction of a smooth curve inside  a projective and smooth $A$-scheme. Given all these considerations, we set out to put the method of Deninger and Werner in a more robust form. In doing so we are able to 
circumvent Liu's theorem on the existence of semi-stable models of relative curves (a crucial point in \cite{deninger-werner05} and hence in \cite[Lemma 3.1]{mehta-subramanian13})
by means of the  reduced fibre theorem (Theorem \ref{reduced_fibre_theorem}). The advantage is that the latter result holds for more general schemes than just curves. This allows us to show 
Theorem \ref{14.05.2018--1} saying that an $F$-trivial  vector bundle $E$ (see the introduction of Section \ref{06.03.2019--1} for the definition) on $X_0$ becomes relatively trivial (Definition \ref{20.02.2019--3}) after   pull back by a finite morphism $Y\to X$ from a flat $A$-scheme. In the same vein, we offer Theorem \ref{MS_theorem} explaining how to trivialize any  vector bundle  $\ce\in\bb{VB}(X)$ with essentially finite fibres by  a finite morphism $Y\to X$ from a flat $A$-scheme.  (It should be noted that the proof of this result is technically intricate, but the     main idea comes straight from Theorem \ref{14.05.2018--1}.) The section then ends with a direct comparison between the category $\g T_X$ and the one studied by \cite{mehta-subramanian13}, see Corollary \ref{23.11.2018--2}, where it is shown that vector bundles on $\g T_X$ agree with those introduced in op. cit. 

Section \ref{14.01.2019--3} gives further applications to the theory of torsors and shows that torsors under quasi-finite group schemes actually come from finite ones, see Theorem \ref{25.06.2018--1}. Another salient point developed in this section is the comparison to the theory of \cite{antei-emsalem-gasbarri18}: 
By demonstrating that $\Pi(X,x_0)$ ``classifies'' pointed quasi-finite torsors over $X$ (Theorem \ref{13.03.2019--2}) we prove that $\Pi(X,x_0)$ agrees with the group denoted $\pi_1(X,x_0)^{\rm qf}$ in op.cit., see Theorem \ref{13.03.2019--1}.

\section*{Notations and conventions}\label{notations_conventions}

\subsection*{{On the base ring.}}

\begin{enumerate}[(1)]
\item We let $A$ be a discrete valuation ring with uniformizer $\pi$, field of fractions $K$ and residual field $k$. 
The quotient ring $A/(\pi^{n+1})$ is denoted by $A_n$. 
\item Given an object $W$ over $A$ (a scheme, a module, etc) and an $A$-algebra $B$,  we find useful to write $W_B$ instead of $W\ot_AB$. If context prevents any misunderstanding, we also employ $W_n$ instead of $W_{A_n}$.

\item The characteristic of a discrete valuation ring is the couple $(r,p)$, where $r$ is the characteristic of the field of fractions and $p$ that of the residue field. 
\end{enumerate}

\subsection*{ On general algebraic geometry}
\begin{enumerate}[(1)]
\item A {\it vector bundle} over a scheme is a locally free sheaf of finite rank. A vector bundle is said to be trivial if it is isomorphic to a direct sum of a copies of the structure sheaf (in particular the rank is constant).

\item Let $R$ be a noetherian ring and $Y$ a proper $R$-scheme. We shall say that $Y$ is {\it $H^0$-flat over $R$} if it is flat and cohomologically flat \cite[p.206]{bosch-lutkebohmert-raynaud90} in degree zero over $\spc R$. In this work we shall employ constantly that if $Y$ is $R$-flat, then (a)  $H^0$-flatness amounts to exactness of $M\mapsto H^0(\co_Y\ot_AM)$ \cite[III.12.5 and III.12.6]{hartshorne77} and 
(b) if $Y$ the fibres of $Y$      are geometrically reduced \ega{IV}{2}{4.6.2, p.68}, then $Y$ is $H^0$-flat over $R$ \ega{III}{2}{7.8.6, p.206}.

\item If $R$, respectively $T$, is an $\FF_p$-algebra, respectively $\FF_p$-scheme, we write $F_R:R\to R$, respectively $F_T:T\to T$, to denote the {\it Frobenius morphism}. If $T$ is in addition a scheme over a perfect field, we adopt the notations of \cite[Part I, 9.1]{jantzen87} with the exception that we write ${\rm Fr}^s:M^{(t)}\to M^{(s+t)}$ while Jantzen uses $F_{M^{(t)}}^s:M^{(t)}\to M^{(t+s)}$.

\item If $R$ is a discrete valuation ring and $y:\spc R\to Y$ is an $R$-point of a scheme $Y$, we shall write $y_{\rm gen}$ for the image of the generic point of $\spc R$ in $Y$.

\item Let $X\to S$ be a morphisms of schemes possessing in addition a section $x:S\to X$. Given morphisms  $\ph:X'\to X$ and $f:S'\to S$,  any morphism of $S$-schemes $x':S'\to X'$ such that $\ph x'=x  f$ shall be called an $S'$-point of $X'$ above $x$. (That is, for the sake of economy we choose not to make base-changes to $S'$.)

\end{enumerate}

\subsection*{ On group schemes}
\begin{enumerate}[(1)]
\item
To avoid repetitions,  ``\emph{group scheme}'' is a synonym for  ``\emph{affine group scheme}.'' If $G$ is a group scheme over a ring $R$,  and $R'$ is an $R$-algebra, we write $R'[G]$ instead of $\co(R'\ot_RG)$.  
\item The category of flat group schemes over a ring $R$ is denoted by $(\mathbf{FGSch}/R)$. 

\item Let $G$ be a flat group scheme over the noetherian ring $R$. When dealing with representations of  $G$ we follow the conventions of \cite[Part I, Ch. 2]{jantzen87} with the exception that the word ``representation'' is reserved for $G$-modules which are of finite type over $R$.  The category of representations is denoted by $\rep RG$. The full subcategory of  representations which underlie  locally free $R$-modules is denoted by ${\rm Rep}_R^\circ(G)$. 
\item The {\it right-regular}, respectively {\it left-regular}, $G$-module \cite[2.7]{jantzen87} shall be denoted by $R[G]_{\rm right}$, respectively $R[G]_{\rm left}$. 
\item If $f:G\to H$ is an arrow of $(\bb{FGSch}/R)$, we let $f^\#:\rep RH\to\rep RG$ be the restriction functor. 
\end{enumerate}

\subsection*{  On torsors }
\begin{enumerate}[(1)]
 
\item Let $R$ be a ring, $X$ an $R$-scheme, 
$G$ and $H$  group schemes over $R$,   $P
\to X$ a $G$-torsor and $Q\to X$ an $H$-torsor. A {\it generalized morphism} from $P$ to $Q$ is a couple $(f,\rho)$ consisting of an arrow of $X$-schemes $f:P\to Q$ and a morphism of group schemes $\rho:G\to H$ such that, for points with value on arbitrary $R$-algebras, we have  $f(yg)=f(y)\rho(g)$. In this case, we say that the generalized morphism {\it covers} the morphism $\rho$. If $G=H$, then a morphism of torsors from $P$ to $Q$ is simply a $G$-equivariant morphism of $X$-schemes or a generalized morphism covering the identity. 

\item If $R$ is a ring,  $\rho:G\to H$ is an arrow of $(\bb{FGSch}/R)$ and  $P\to X$ is a $G$-torsor, we let $P\ti^GH$ or $P\ti^\rho H$ be the associated $H$-torsor (see \cite[III.4.3.2, p.368]{demazure-gabriel70} or \cite[Part I, 5.14]{jantzen87}). 

\item If $G$ is a flat group scheme over a ring $R$, $P\to X$ is a $G$-torsor and $M$ is a representation of $G$, then we let $\te_P(M)$ stand for the coherent sheaf constructed by twisting $P$ by $M$, see \cite[Part 1, 5.8--9]{jantzen87}, where it is denoted by $\cl_{P/G}(M)$.
\end{enumerate}

\subsection*{Miscellaneous}
\begin{enumerate}[(1)]
\item All tensor categories and functors are to be taken in the sense of \cite[\S1]{deligne-milne82}. Let $(\g C,\ot)$ be a rigid tensor category \cite[Definition 1.7]{deligne-milne82}. If $\g C$ is additive, then, for  $a=(a_1,\ldots,a_m)$ and $b=(b_1,\ldots,b_m)$ in $\NN^m$, and $E\in\g C$, we write $\bb T^{a,b}E$ to denote the object $\bigoplus_iE^{\ot a_i}\ot \check{E}^{\ot b_i}$. If $\g C$ is in addition abelian, we let $\langle E\,;\,\g C\rangle_\ot$ stand for the full subcategory of $\g C$ whose objects are subquotients of some  $\bb T^{a,b}E$.  

\item If $X$ is a proper and reduced scheme over a field, we let $\bb{EF}(X)$
 denote the category of essentially finite vector bundles on $X$. See \cite[Definition, p.37-8]{nori}.  
\end{enumerate}

\subsection*{Acknowledgements}The research of PHH is supported by the Vietnam National Foundation for Science and Technology Development  (Nafosted) grant number  101.04-2016.19. Parts of his research on this work were carried out during several visits to France supported by the project “International Associated Laboratory FORMATH VIETNAM” of VAST, grant number QTFR01.04/18-19.,  the Institute Henri Poincar\'e, Paris, and  a ``Poste Rouge'' grant from the CNRS, France.
JPdS thanks A. Ducros for pointing out \cite{temkin10} and for useful discussions, Q. Liu for replying conscientiously to his e-mails concerning cohomological flatness, and I. Biswas as well as M. Romagny for answering some of his questions. 


\section{Subsidiary material}\label{subsidiary_material}

We collect here some simple facts which are useful in developing our arguments.  

\begin{lem}\label{03.07.2018--1}Let $Y$ be a proper, reduced and connected scheme over the field $K$. If $Y$ has a $K$-point then $K=H^0(\co_Y)$.
\end{lem}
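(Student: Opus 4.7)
The plan is to show that $R := H^0(\mathcal{O}_Y)$ is a finite extension field of $K$ and then use the $K$-point to force this extension to be trivial.

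First I would observe that properness of $Y$ over $K$ gives that $R$ is a finite-dimensional $K$-algebra (this is part (a) of the coherence theorem for proper morphisms, applied to $\mathcal{O}_Y$). Next, since $Y$ is reduced, for any open $U \subset Y$ the ring $\mathcal{O}_Y(U)$ is reduced; in particular $R = \mathcal{O}_Y(Y)$ is a reduced ring. Moreover, idempotents of $R$ correspond bijectively to open-closed subsets of $Y$; since $Y$ is connected, $R$ has only the trivial idempotents $0$ and $1$, i.e.\ $R$ is indecomposable as a ring.

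Now I would invoke the structure theory of finite-dimensional $K$-algebras: a finite-dimensional (hence Artinian) $K$-algebra decomposes uniquely as a finite product of local Artinian $K$-algebras, the factors being indexed by the primitive idempotents. Since $R$ is indecomposable, $R$ is itself a local Artinian $K$-algebra. Being also reduced, the maximal ideal of $R$ must be zero, so $R$ is a field, which is a finite field extension $L$ of $K$.

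Finally, I would use the $K$-point $x_0 \in Y(K)$. Pulling back global sections along $x_0 : \mathrm{Spec}\, K \to Y$ yields a $K$-algebra homomorphism $\mathrm{ev}_{x_0} : L = H^0(\mathcal{O}_Y) \to K$. Since $L$ is a field, this map is injective; since it is a $K$-algebra map and its image contains $K$, it is surjective onto $K$. Therefore $L = K$, which is the desired equality $H^0(\mathcal{O}_Y) = K$. The only delicate point is the combination of \emph{reduced} and \emph{connected} to conclude that the finite $K$-algebra $R$ is actually a field; everything else is a direct application of properness and of the evaluation at the given $K$-point.
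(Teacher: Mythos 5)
Your proof is correct and follows essentially the same route as the paper's: properness gives finiteness of $H^0(\co_Y)$ over $K$, connectedness forces the Artinian decomposition to have a single local factor, reducedness upgrades that local Artin algebra to a field, and the $K$-point then pins this field down to $K$ itself. You simply spell out the idempotent/indecomposability step in more detail than the paper, which states the local decomposition directly.
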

\begin{proof}We know that $H^0(\co_Y)=R_1\ti\cdots\ti R_m$, where each $R_i$ is a local Artin algebra. Since $Y$ is connected, we must have $m=1$. So   $H^0(\co_Y)$ is a local Artin algebra. Since $Y$ is reduced, $H^0(\co_Y)$ is reduced, so $H^0(\co_Y)$ is a finite field extension of $K$. The existence of a $K$-rational point determines a morphism of $K$-algebras $H^0(\co_Y)\to K$, and this forces $H^0(\co_Y)$ to be $K$.
\end{proof}

\begin{lem}\label{03.07.2018--2}Let $Y$ be a flat and  proper $A$-scheme. If $H^0(\co_{Y\ot_AK})=K$, then $H^0(\co_Y)=A$. In particular, if 
$Y\ot_AK$ is reduced and connected, and has a $K$-point,  then $H^0(\co_Y)=A$. 
\end{lem}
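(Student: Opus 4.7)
The plan is to realise $H^0(\co_Y)$ as a finite $A$-subalgebra of $K$, and then invoke the fact that $A$ is integrally closed in $K$ to force equality with $A$.

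First, apply the finiteness theorem for proper morphisms to the structural map $Y\to\spc A$: since $A$ is noetherian and $Y$ is proper, the $A$-module $H^0(\co_Y)$ is finitely generated, hence integral over $A$.

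Next, I would verify $A$-torsion-freeness of $H^0(\co_Y)$. Because $Y$ is flat over $A$, multiplication by the uniformizer $\pi$ is injective on every stalk $\co_{Y,y}$, so it is an injective morphism of sheaves $\co_Y\to\co_Y$. Taking global sections (a left exact functor) shows that $\pi$ is a non-zero-divisor on $H^0(\co_Y)$; since $A$ is a DVR, this already means that $H^0(\co_Y)$ is $A$-torsion-free. The canonical map $H^0(\co_Y)\to H^0(\co_Y)\ot_AK$ is therefore injective. By flat base change along $\spc K\to\spc A$ for the proper morphism $Y\to\spc A$, its target identifies with $H^0(\co_{Y_K})$, which by hypothesis equals $K$. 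So $H^0(\co_Y)$ is realised as an $A$-subalgebra of $K$.

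Now the punchline: $H^0(\co_Y)$ is integral over $A$ and $A$ is integrally closed in $K$, whence $H^0(\co_Y)\subseteq A$. The structure morphism $A\to H^0(\co_Y)$ is itself injective, since its composition with the embedding $H^0(\co_Y)\hookrightarrow K$ is the usual inclusion $A\hookrightarrow K$; therefore $A\subseteq H^0(\co_Y)$ as well, and equality follows. For the final clause of the statement it suffices to apply Lemma \ref{03.07.2018--1} to $Y_K$---which is reduced, connected, and has a $K$-point by assumption---to obtain $H^0(\co_{Y_K})=K$, thereby reducing to the main case.

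I do not anticipate serious obstacles. The only mildly delicate point is the torsion-freeness of $H^0(\co_Y)$: flatness of $Y$ over $A$ is a stalkwise condition and global sections do not preserve flatness in general, but because $A$ is a DVR the relevant flatness reduces to the non-vanishing of $\pi$ on stalks, which transfers to global sections via left-exactness.
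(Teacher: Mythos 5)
Your proof is correct and follows essentially the same path as the paper's: finiteness of $H^0(\co_Y)$ over $A$, flat base change to identify $H^0(\co_Y)\ot_AK$ with $K$, and normality of $A$ to pinch $A\subset H^0(\co_Y)\subset K$ into an equality. You merely make explicit the torsion-freeness step that the paper leaves implicit when writing $H^0(\co_Y)\subset K$.
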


\begin{proof}We know that $H^0(\co_Y)$ is a finite $A$-algebra and that $H^0(\co_Y)\ot K=K$ by flat base-change \cite[III.9.3, p. 255]{hartshorne77} together with the assumption. Then, $A\subset H^0(\co_Y)\subset K$. Since $A$  is normal, we must have $A=H^0(\co_Y)$. The verification of the last claim follows immediately from Lemma \ref{03.07.2018--1}.
\end{proof}

\begin{lem}\label{03.05.2018--1}Let  $Y$ be a proper and $H^0$-flat $A$-scheme. Let $\cf$ be a vector bundle on $Y$ whose base-changes $\cf_K$ and $\cf_k$ are trivial. Then $\cf$ is trivial. 
\end{lem}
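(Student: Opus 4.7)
The plan is to build an isomorphism $\phi : \co_Y^r \to \cf$ by lifting the given trivialization of $\cf_k$ to global sections, and then to check that it is everywhere invertible via a determinant argument.

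Set $B := H^0(Y, \co_Y)$. The $H^0$-flatness assumption forces $B$ to be finite flat (hence free) over $A$ of some rank $n$, with base-change isomorphisms $B \otimes_A k \cong H^0(Y_k, \co_{Y_k})$ and $B \otimes_A K \cong H^0(Y_K, \co_{Y_K})$. The hypothesis that $\cf_K$ and $\cf_k$ are trivial of rank $r$ then gives $h^0(Y_K, \cf_K) = rn = h^0(Y_k, \cf_k)$. By the theorem of cohomology and base change (constancy of $h^0$ on the flat family $Y \to \spc A$), $M := H^0(Y, \cf)$ is a free $A$-module of rank $rn$, and the natural map $M \otimes_A k \to H^0(Y_k, \cf_k)$ is an isomorphism.

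Now fix a trivialization $\cf_k \simeq \co_{Y_k}^r$. This picks out $r$ standard sections $\bar s_1, \ldots, \bar s_r \in H^0(Y_k, \cf_k)$, which we lift through the base-change surjection to sections $s_1, \ldots, s_r \in M$. Taking the associated map $\phi : \co_Y^r \to \cf$, its reduction $\phi_k$ is---under the chosen trivialization of $\cf_k$---the identity map, and in particular an isomorphism.

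To conclude that $\phi$ itself is an isomorphism, consider $\det \phi$ as a section of the line bundle $\det \cf$. Its vanishing locus $V$ is closed in $Y$ and, since $\phi_k$ is invertible, disjoint from $Y_k$; hence $V \subset Y_K$. If $V$ contained a point $y$, the closure $\overline{\{y\}} \subset V$ would be proper over $\spc A$, and its image---being closed in $\spc A$ and containing the generic point---would be all of $\spc A$, so $\overline{\{y\}}$ would meet $Y_k$, contradicting $V \cap Y_k = \emptyset$. Therefore $V = \emptyset$ and $\phi$ is an isomorphism. The main subtlety is securing the constancy of $h^0$, which is precisely what the $H^0$-flatness hypothesis delivers; without it, Grauert's theorem cannot be applied and the lifting step fails.
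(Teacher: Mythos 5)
Your proof is correct and follows essentially the same route as the paper: use $H^0$-flatness to get the equality $h^0(\cf_K)=h^0(\cf_k)$, invoke cohomology and base change to lift a trivializing frame of $\cf_k$ to global sections of $\cf$, and then use properness over $\spc A$ to push invertibility from the special fibre to all of $Y$. The only cosmetic difference is in the final step, where you phrase the conclusion via non-vanishing of $\det\phi$ whereas the paper argues that $\mm{Supp}(\cf/\sum\co_Y s_i)$ is a closed set disjoint from $Y_k$ and then checks the absence of relations; these two arguments are interchangeable.
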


\begin{proof}It is not difficult to see that the function  $y\mapsto\mm{rank}\,\cf_y$ has to be constant and equal to $n$, say.  
From the equalities 
\[\begin{split}
\dim_kH^0(Y_k,\cf_k)&=\dim_kH^0(Y_k,\co_{Y_k})^{\op n}\\
&=\dim_KH^0(Y_K,\co_{Y_K})^{\op n}\\
&=\dim_KH^0(Y_K,\cf_K),
\end{split} 
\]
and Corollary 2 of \cite[\S5]{mumford70}, we conclude that  the canonical morphism 
\[H^0(Y,\cf)\aro H^0(Y_k,\cf_k)
\]
is surjective. Pick sections $s_1,\ldots,s_n\in H^0(Y,\cf)$ such that $\{s_{i}|_{Y_k}\}_{i=1}^n$ is a basis of $\cf_k$. Then, for each point $y$ of the closed fibre, Nakayama's Lemma shows that $\{s_{i,y}\}_{i=1}^n$ is a basis of the free $\co_{y}$-module $\cf_y$ so that   $\mm{Supp}(\cf/\sum\co_Ys_i)$ is a closed subset of $Y$ disjoint from the special fibre. Such a property is only possible if $\cf=\sum\co_Ys_i$. The proof is then complete once we note that, for a $y\in  Y$, the $n$ generators $\{s_{i,y}\}_{i=1}^n$ of $\cf_y\simeq\co_y^{\op n}$ do not admit any non-trivial relation.  
\end{proof}

In organising our findings, we shall make repeated use of a certain class of morphisms of proper schemes. In order to avoid repetitions and to serve as a reference for the reader, we put forward:

\begin{dfn}\label{26.06.2018--1}Let   $X$ be a connected, proper and flat $A$-scheme carrying an $A$-point $x_0$. 
Let $\g S(X,x_0)$ (or simply $\g S$ if context prevents any misunderstanding) be the set of all  $X$-schemes $\ph:Y\to X$ such that:

\begin{enumerate}
\item[$\g S1$.] $\ph$ is proper and surjective. 
\item[$\g S2$.] $Y$ is $H^0$-flat over $A$. 
\item[$\g S3$.] The canonical arrow $Y\to\spc {H^0(\co_Y)}$ admits a section $y_0$ such that $\ph y_0$ extends $x_0$. (There exists an $H^0(\co_Y)$-point in $Y$ above $x_0$.) 
\end{enumerate}
We shall denote by $\g S^+(X,x_0)$ (or simply $\g S^+$) the subset of morphisms $\ph:Y\to X$ in $\g S$ which, in addition, satisfy 
\begin{enumerate}\item[$\g S4$.] The canonical morphism $Y\to\spc H^0(\co_Y)$ is   flat. 
\end{enumerate}
\end{dfn}


\begin{rmk}\label{05.02.2019--1}Let $\ph:Y\to X$         belong  to $\g S(X,x_0)$ as in Definition \ref{26.06.2018--1}; it turns out that in this case, connectedness of $X$ is automatic because of  $\g S3$. Indeed, let $y_0$ be the alluded $H^0(\co_Y)$-point of $Y$ above $x_0$. 
Suppose that $e\in H^0(\co_X)$ is   idempotent and  that $x_0^\#(e)=0$. Then, $\ph^\#(e)\in H^0(\co_Y)$ is such that $y_0^\#(\ph^\#(e))=0$ and this implies that $\ph^\#(e)=0$. Since $\ph$ is surjective, we conclude that $e=0$ (since $e$ has to vanish on each local ring of $X$). Hence, the only connected component of $X$ must be the one containing the image of $x_0$. 
\end{rmk}

We should now gather material on affine group schemes. 

\begin{dfn}\label{22.03.2019--1}Let $G\in(\bb {FGSch}/A)$. We say that $G$ is pseudo-finite if both its fibres are finite group schemes over the respective residue fields. 
\end{dfn}

\begin{rmk}We have no examples of pseudo-finite group schemes which are not quasi-finite to offer. \end{rmk}

As Lemma \ref{29.11.2018--1} below recalls, one advantage of finite group schemes over a field is that the standard criterion for verifying when a morphism is faithfully flat in terms of representation categories \cite[Proposition 2.21, p.139]{deligne-milne82} admits a considerable simplification. This is then transmitted to pseudo-finite group schemes   as argued by Lemma \ref{21.03.2019--1}. 
\begin{lem}[{\cite[pp. 87-8]{nori82}}]\label{29.11.2018--1} Let $u:H\to G$ be a morphism of  group schemes over a field and let \[H\stackrel q\aro  I\stackrel i\aro G\] be its factorisation into a  faithfully flat morphism $q$ and a closed immersion $i$. Assuming that $I$ is finite (which is the case if either $G$ or $H$ is finite) then a necessary and sufficient condition for $u$ to be faithfully flat is that $\dim\co(G)^H=1$.
\end{lem}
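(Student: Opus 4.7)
The plan is to reduce the statement to the case where $u$ is a closed immersion, and then invoke the theory of the quotient scheme $G/H$.

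First, observe that $u$ is faithfully flat if and only if the closed immersion $i$ is an isomorphism, since $q$ is already faithfully flat by construction. Next, the faithful flatness of $q:H\to I$ means that $q^{\#}:\co(I)\to\co(H)$ is injective, so that for any $I$-module $M$ viewed as an $H$-module through $q$, the coaction condition $\rho(m)=m\ot 1$ can be checked in either $\co(I)$ or $\co(H)$; therefore $M^{H}=M^{I}$. Applied to $M=\co(G)$ regarded as an $I$-module through $i$, this yields $\co(G)^{H}=\co(G)^{I}$. So we may replace $u$ by $i$ and assume that $u=i:H\hookrightarrow G$ is a closed immersion identifying $H$ with a finite subgroup scheme of $G$.

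For the direct implication, if $i$ is an isomorphism then $H=G$ acts on $\co(G)$ through the right regular representation; applying $\ep\ot\id$ to the invariance condition $\De f=f\ot 1$ yields $f=\ep(f)\cdot 1$, so $\dim\co(G)^{G}=1$. For the converse, I would use that, because $H$ is a finite closed subgroup scheme of the affine group scheme $G$, the quotient $G/H$ exists as an affine scheme, the canonical morphism $p:G\to G/H$ is a faithfully flat $H$-torsor, and $\co(G/H)=\co(G)^{H}$ (cf.\ \cite[III.3]{demazure-gabriel70} or \cite[Part I, 5.5]{jantzen87}). The hypothesis $\dim\co(G)^{H}=1$ then forces $G/H=\spc k$; the identity point of $G$ provides a section $\spc k\to G$ of $p$, trivialising the $H$-torsor $G\to\spc k$, so $G\simeq H$ and $i$ is an isomorphism.

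The main obstacle is the technical background on quotients of affine group schemes: one must carefully establish the existence of the affine quotient $G/H$ and the identification $\co(G/H)=\co(G)^{H}$ in the present generality, where $G$ need not be algebraic. A purely Hopf-algebraic alternative uses the torsor identity $\co(G)\otu{\co(G)^{H}}\co(G)\simeq\co(G)\ot_{k}\co(H)$: under the assumption $\co(G)^{H}=k$ this becomes an isomorphism $\co(G)\ot_{k}\co(G)\simeq\co(G)\ot_{k}\co(H)$ of left $\co(G)$-modules, so $\dim_{k}\co(G)=\dim_{k}\co(H)$, which, combined with the surjection $\co(G)\twoheadrightarrow\co(H)$, forces $\co(G)=\co(H)$.
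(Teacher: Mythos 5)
Your proposal is correct and follows essentially the same approach as the paper. You make the reduction to the closed immersion $i\colon I\hookrightarrow G$ slightly more explicit, but the crux — that $\co(G)$ is a faithfully flat (indeed projective) $\co(G)^{I}$-module of rank $\dim\co(I)$, so $\co(G)^{I}=k$ forces $\dim\co(G)=\dim\co(I)$ — is exactly the paper's appeal to Mumford \cite[\S11, Theorem~1(B)]{mumford70} / Demazure--Gabriel III.2.4; your ``Hopf-algebraic alternative'' is literally this rank computation, while the torsor-trivialisation phrasing is a geometric restatement of the same fact.
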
 
\begin{proof} We leave the proof of necessity  in the statement to the reader and from now on assume that $\dim\co(G)^H=1$.  Clearly,  the equality $I=G$ is equivalent to faithful flatness of $u$. 
Since $\co(G)^H=\co(G)^I$, the assumptions translate into  $\dim\co(G)^I=1$.  Now, we know that $\co(G)$ is a projective $\co(G)^I$-module whose rank equals $\dim\co(I)$ (see \cite[\S11, Theorem 1(B), p.111]{mumford70} or III.2.4 of \cite{demazure-gabriel70}). Then, since $\dim\co(G)^I=1$, we conclude 
that   $\dim\co(I)=\dim\co(G)$, which means that $I= G$. 
\end{proof}

\begin{lem}\label{21.03.2019--1}
Let $u:H\to G$ be an arrow in $(\bb{FGSch}/A)$ and assume that 
$G$ is pseudo-finite. 
Then the following conditions are equivalent. 
\begin{enumerate}[(1)]\item Both equalities  
\[
A\po1= \left(A[G]_{\rm right}\right)^H \quad \text{and}\quad k\po1= \left(k[G]_{\rm right}\right)^H, 
\] are true. 
\item 
The morphism $u$ is faithfully flat. 

\item Let 
\[
\g s(G)=
\left\{ \begin{array}{c} \text{$V\subset A[G]_{\rm right}$ is $G$-invariant, is finitely generated }
\\
\text{ as an $A$-module and contains the constants}
\end{array}
\right\}
\]
and 
\[
\g s_0(G)=
\left\{ \begin{array}{c} \text{$M\subset k[G]_{\rm right}$ is $G$-invariant, is finitely generated }
\\
\text{ as a $k$-space and contains the constants}
\end{array}
\right\}. 
\]
Then, for each $V\in\g s(G)$ and $M\in\g s_0(G)$ we have 
\[A\po1= V^H \quad \text{and}\quad k\po1= M^H. 
\] 
\end{enumerate}
\end{lem}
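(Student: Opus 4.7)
The plan is to prove the cycle $(3)\Rightarrow(1)\Rightarrow(2)\Rightarrow(3)$, with Lemma \ref{29.11.2018--1} applied to the two fibres of $G$ as the central tool in the crucial implication $(1)\Rightarrow(2)$.

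For $(3)\Rightarrow(1)$, I would exhaust the right-regular representation by the elements of $\g s(G)$. The module $A[G]$ is a locally finite $G$-comodule, in the sense that every $a\in A[G]$ belongs to the finitely generated subcomodule spanned by the $a_i$, where $\Delta(a)=\sum_i a_i\otimes b_i$; adjoining the constants to such a subcomodule produces an element of $\g s(G)$. Since $H$-invariants commute with filtered unions,
\[
(A[G]_{\rm right})^H\;=\;\bigcup_{V\in\g s(G)}V^H\;=\;A\cdot 1
\]
under (3), and the same argument delivers $(k[G]_{\rm right})^H=k\cdot 1$ from the assumption on $\g s_0(G)$.

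For $(2)\Rightarrow(3)$, I would invoke the standard equality $V^H=V^G$ for every $G$-module $V$ when $u$ is faithfully flat: since $u^{\#}\colon A[G]\to A[H]$ is faithfully flat, the base-change map $V\otimes_A A[G]\to V\otimes_A A[H]$ is injective, and from $(\id\otimes u^\#)(\rho(v)-v\otimes 1)=0$ for $v\in V^H$ one deduces $\rho(v)=v\otimes 1$, i.e. $v\in V^G$. Combined with the classical identity $(A[G]_{\rm right})^G=A\cdot 1$, for any $V\in\g s(G)$, which contains the constants, one obtains $V^H=V^G=A\cdot 1$; the argument over $k$ proceeds identically.

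For $(1)\Rightarrow(2)$, the flatness of $G$ and $H$ over $A$ together with the local criterion of flatness (plus the evident fibre-wise criterion for surjectivity over $\spc A$) reduces the faithful flatness of $u$ to that of both fibres $u_K$ and $u_k$. Pseudo-finiteness of $G$ makes $G_K$ and $G_k$ finite, so Lemma \ref{29.11.2018--1} converts each case to the computations $\dim_K K[G_K]^{H_K}=1$ and $\dim_k k[G_k]^{H_k}=1$. The second is the residual half of (1). For the first, the $H$-invariants $(A[G])^H$ are defined as the kernel of an $A$-linear map, so flatness of $K$ over $A$ yields $K\otimes_A(A[G])^H=K[G_K]^{H_K}$, which by the generic half of (1) equals $K\cdot 1$. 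The main technical hurdle will be precisely this reduction to fibres; the bookkeeping between the integral invariants and their generic counterpart is routine but warrants care.
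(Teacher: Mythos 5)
Your proposal follows the same circuit of ideas as the paper's proof: exhaust $A[G]_{\rm right}$ and $k[G]_{\rm right}$ by local finiteness for $(3)\Rightarrow(1)$, observe that faithful flatness gives $V^H=V^G$ for $(2)\Rightarrow(3)$, and reduce $(1)\Rightarrow(2)$ to the two fibres of $u$ so that Lemma~\ref{29.11.2018--1} applies. The one point to sharpen is the reduction to fibres in $(1)\Rightarrow(2)$. You invoke ``the local criterion of flatness'' and a ``fibre-wise criterion'', but the standard fibre-by-fibre flatness criterion (\'a la EGA IV 11.3.10 or the local criterion in Bourbaki) carries Noetherian or finite-presentation hypotheses that $A[H]$ and $A[G]$ need not satisfy for arbitrary flat affine group schemes over $A$. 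The statement ``if $u_K$ and $u_k$ are faithfully flat then $u$ is faithfully flat'' is genuinely a result about Hopf algebras over a Dedekind base, and the paper handles it by citing \cite[Theorem~4.1.1]{duong-hai18}. So the mechanics are right, but that step should be justified by the group-scheme result rather than by the general commutative-algebra criterion. The identification $K\otimes_A(A[G])^{H}\simeq (K[G]_{\rm right})^{H_K}$ via flat base change through the kernel description is correct, as is the rest.
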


\begin{proof}

(1) $\Rightarrow$ (2).  The assumptions show that 
\[
\left(K[G]_{\rm right} \right)^{H_K} = K\quad\text{and}\quad \left(k[G]_{\rm right} \right)^{H_k}=k.
\]
By Lemma \ref{29.11.2018--1} we conclude   that $u_K:H_K\to G_K$ and  $u_k:H_k\to G_k$ are faithfully flat. Because of \cite[Theorem 4.1.1]{duong-hai18}, $u$ is faithfully flat.

(2)$\Rightarrow$(3).  This is trivial. 

(3)$\Rightarrow$(1). Any $a\in  A[G]_{\rm right}$, respectively $b\in k[G]_{\rm right}$, belongs to a certain $V\in\g s(G)$, respectively $M\in\g s_0(G)$ because of ``local finiteness'' \cite[1.5, Corollary, p.40]{serre68}. The conclusion then follows.  
\end{proof}

\section{Modules trivialized by a proper and $H^0$-flat scheme}\label{14.04.2018--3}

In this section we introduce the category of coherent sheaves on which all further developments hinge: the category of sheaves which became ``trivial'' after a pull-back by a proper morphisms. Since we wish to work with schemes over a d.v.r., the notion of triviality of a coherent module becomes more subtle than the one over a field, and we need to account for modules coming from the base-ring. This is a source of difficulty specially because the ``base'' of the scheme effectuating the trivialisation might grow.


\begin{dfn}\label{20.02.2019--3}Let $Y$ be any $A$-scheme. We say that $\cf\in\bb{coh}(Y)$ is trivial relatively to $A$ if there exists a coherent sheaf (a finite $A$-module) $F$ such that $\cf=\co_Y\ot_AF$.  
\end{dfn}

\begin{rmk}Note that, if $A$ is a field, then a coherent sheaf on $Y$ is trivial relatively to $A$ if and only if it is trivial in according to our conventions in  Section \ref{notations_conventions}. 
\end{rmk}

In the above definition, it is to be expected that several different choices concerning the ``descended'' module are possible. The next lemma, which we state here for future use, explains how to be more canonical.

\begin{lem}\label{29.03.2018--1}Let $Y$ be proper and  $H^0$-flat over $A$. Write $B=H^0(\co_Y)$ and let $F$ be a finite $A$-module.  Then the canonical morphism  
\[
\co_Y\otu{B}H^0(\co_Y\ot_AF)\aro \co_Y\ot_AF
\]
is an isomorphism. 
\end{lem}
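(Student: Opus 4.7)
The plan is to treat both sides as functors of $F$ on finite $A$-modules, to check right-exactness, and then to reduce to the trivial case $F=A$.

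\textbf{Step 1: An auxiliary comparison.} First I would establish the canonical identification
\[
\alpha_F \colon B\otimes_A F\;\stackrel{\sim}{\longrightarrow}\; H^0(\co_Y\otimes_A F),
\qquad b\otimes f\longmapsto b\cdot(1\otimes f),
\]
valid for every finite $A$-module $F$. For $F=A$ it is the identity on $B$, and both sides are right-exact in $F$: on the left, because $B\otimes_A(-)$ is a tensor product; on the right, because $H^0$-flatness is precisely the statement that $F\mapsto H^0(\co_Y\otimes_A F)$ is exact. Since $A$ is Noetherian, $F$ admits a finite presentation $A^m\to A^n\to F\to 0$, and applying the two functors gives a diagram of exact sequences in which $\alpha_{A^m}$ and $\alpha_{A^n}$ are isomorphisms (as direct sums of $\alpha_A=\id_B$). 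The five lemma produces the isomorphism $\alpha_F$.

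\textbf{Step 2: Derive the lemma.} With $\alpha_F$ in hand, the natural arrow of the statement factors as
\[
\co_Y\otimes_B H^0(\co_Y\otimes_A F)\;\stackrel{\id\otimes\alpha_F^{-1}}{\longrightarrow}\;\co_Y\otimes_B B\otimes_A F\;\stackrel{\sim}{\longrightarrow}\;\co_Y\otimes_A F,
\]
the second arrow being the standard contraction of $\otimes_B B$. Both arrows are isomorphisms, and the composition is readily seen to coincide with the canonical map of the lemma: unwinding, it sends $s\otimes(b\otimes f)$ to $sb\otimes f$, which is the multiplication map by construction.

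\textbf{Main obstacle.} The only non-cosmetic point is verifying that $\alpha_F$ is natural in $F$ in a way compatible with the presentation $A^m\to A^n\to F\to 0$, so that the diagram feeding the five lemma in Step~1 genuinely commutes; and that in Step~2 the factorisation indeed recovers the canonical comparison map. Both are purely formal and follow from the functoriality of taking global sections and of tensor products, so no genuine difficulty remains.
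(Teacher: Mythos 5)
Your proof is correct and takes essentially the same approach as the paper: identify $B\otimes_A F\cong H^0(\co_Y\otimes_A F)$ and then contract the $\otimes_B B$ factor. The paper simply asserts that the arrow $\sigma\colon B\otimes_A F\to H^0(\co_Y\otimes_A F)$ is bijective ``by definition of $H^0$-flatness,'' relying on the standard fact that exactness of the functor $M\mapsto H^0(\co_Y\otimes_A M)$ is equivalent to the base-change map $H^0(\co_Y)\otimes_A M\to H^0(\co_Y\otimes_A M)$ being an isomorphism; your Step~1 simply spells out that equivalence (right-exactness of both sides plus a finite presentation, which is exactly the standard argument).
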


\begin{proof}By definition of $H^0$-flatness,  the canonical arrow 
\[\si:B\ot_AF\aro H^0(\co_Y\ot_AF)
\]
is bijective. We then consider the commutative diagram 
\[
\xymatrix{\co_Y\otu{B}H^0(\co_Y\ot_AF)\ar[rr]&&\co_Y\ot_AF\\\co_Y\otu{B}(B\ot_AF)\ar[rru]_\sim\ar[u]^-{\sim}_-{{\rm id}\ot\si}}
\] 
where the horizontal arrow is the one of the statement. 
\end{proof}

Here is the central definition of this section. 
\begin{dfn}\label{05.09.2018--3}If $\ph:Y\to X$ is a morphism of $A$-schemes, we let $\g T_\ph$   stand for the \emph{full} subcategory of $\bb{coh}(X)$ consisting of those coherent sheaves $\ce$ such that the coherent $\co_Y$-module $\ph^*(\ce)$ is trivial relatively to $A$. 
\end{dfn}

Let $X$ be a connected, proper and flat $A$-scheme, and   $x_0$ be an  $A$-point of $X$. Let  $\ph:Y\to X$ an object of $\g S(X,x_0)$ (see Definition \ref{26.06.2018--1}). 
If $B=H^0(\co_Y)$, we let $y_0$ stand for the $B$-point of $Y$ above $x_0$.
 We note that the $A$-module $B$ is finite and torsion-free, hence free. The following will be useful further ahead. 
 
\begin{lem}\label{21.02.2019--1} In the above notation, a coherent module $\cf\in\bb{coh}(X)$ belongs to $\g T_\ph$ if and only if $\ph^*(\ce)$ is trivial relatively to $B$. 
\end{lem}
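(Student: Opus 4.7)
My plan is to verify both directions of the equivalence separately. The direction ``trivial over $A$ implies trivial over $B$'' is immediate: since the structural morphism $Y\to\spc A$ factors as $\psi:Y\to\spc B$ followed by $\tau:\spc B\to\spc A$, for any finite $A$-module $F$ we have the identity of $\co_Y$-modules $\co_Y\otimes_A F=\co_Y\otimes_B(B\otimes_A F)$. Hence the finite $B$-module $G:=B\otimes_A F$ witnesses triviality of $\ph^*(\cf)$ relative to $B$.

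For the converse, suppose $\ph^*(\cf)\cong\co_Y\otimes_B G$ for some finite $B$-module $G$. The key is to exploit condition $\g S 3$, which provides a section $y_0:\spc B\to Y$ of $\psi$ such that $\ph y_0=x_0\circ\tau$. I would pull back the given isomorphism along $y_0$: because $\psi y_0=\id$, we have $y_0^*\psi^*=\id$, so the right-hand side $y_0^*(\co_Y\otimes_B G)$ recovers the $B$-module $G$, whereas the left-hand side equals
\[
(\ph y_0)^*(\cf)=\tau^*x_0^*(\cf)=B\otimes_A x_0^*(\cf).
\]
Setting $F:=x_0^*(\cf)$, which is a finite $A$-module, this gives $G\cong B\otimes_A F$ as $B$-modules, and therefore
\[
\ph^*(\cf)\cong\co_Y\otimes_B G\cong\co_Y\otimes_B(B\otimes_A F)\cong\co_Y\otimes_A F,
\]
placing $\cf$ in $\g T_\ph$.

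I do not anticipate a serious obstacle. The only subtle point worth highlighting is that the naive attempt $F=G$ viewed merely as an $A$-module does not work in general --- for instance, if $G=B$ then $\co_Y\otimes_A B$ and $\co_Y\otimes_B B=\co_Y$ have different $\co_Y$-ranks. The role of the section $y_0$ is precisely to recognise that the specific $B$-module $G$ arising from a sheaf $\ph^*(\cf)$ pulled back from $X$ must already be base-changed from an $A$-module, and the compatibility $\ph y_0=x_0\circ\tau$ encoded in $\g S 3$ identifies that $A$-module as $x_0^*(\cf)$.
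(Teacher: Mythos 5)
Your proposal is correct and follows essentially the same route as the paper: the nontrivial direction is handled by restricting the isomorphism $\ph^*(\cf)\simeq\co_Y\otimes_B G$ along the section $y_0$ to identify $G$ with $B\otimes_A(\cf|_{x_0})$, exactly as in the paper's proof. The only difference is that you also spell out the (trivial) converse direction, which the paper omits.
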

\begin{proof}We only show the ``if'' clause. In this case, $\ph^*(\cf)\simeq\co_Y\ot_BF$ and using the point $y_0$, we conclude that $F\simeq \ph^*(\cf)|_{y_0}$. Now, since $y_0$ is taken to the $A$-point $x_0$, we have that $\ph^*(\cf)|_{y_0}\simeq B\ot_A(\cf|_{x_0})$ so that $\ph^*(\cf)\simeq \co_Y\ot_A(\cf|_{x_0})$.
\end{proof}


We now give ourselves an object $\ce$ of $\g T_\ph$: this means that $\ph^*(\ce)$ is isomorphic to the pull-back of a certain $A$-module to $Y$. 
In what follows we wish to find more propitious candidates for the latter isomorphism and module so to achieve a proof of Proposition \ref{05.04.2018--3} below. 
This Proposition is a key step in endowing the category $\g T_\ph$ with kernels and cokernels (see Section \ref{04.07.2018--1}). To have a better hold of the subtlety behind Proposition \ref{05.04.2018--3}, the reader is asked to read Remark \ref{01.02.2019--1} below.

As the category of finite $A$-modules is rather simple, we have: 
\begin{lem}\label{20.02.2019--4}
Let $E=\ce|_{x_0}$. Then the $\co_{Y}$-modules $\co_{Y}\ot_AE$ and $\ph^*(\ce)$ are (non-canonically) isomorphic.
\end{lem}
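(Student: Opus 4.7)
The plan is to fix any isomorphism $\alpha\colon\ph^{*}(\ce)\arou{\sim}\co_Y\ot_A F$ supplied by the hypothesis $\ce\in\g T_\ph$ and to reduce the statement to showing that $F$ and $E$ are already isomorphic as $A$-modules. Indeed, once such an isomorphism $F\simeq E$ has been produced, applying $\co_Y\ot_A(-)$ and composing with $\alpha$ yields the desired non-canonical isomorphism $\co_Y\ot_A E\simeq\ph^{*}(\ce)$.

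To compare $F$ with $E$, the natural move is to pull back $\alpha$ along the $B$-point $y_0\colon\spc B\to Y$ provided by $\g S3$, where $B:=H^0(\co_Y)$. On one side we immediately have $y_0^{*}(\co_Y\ot_A F)=B\ot_A F$; on the other, using the identity $\ph y_0=x_0\circ f$, with $f\colon\spc B\to\spc A$ the structural morphism, we find $y_0^{*}\ph^{*}(\ce)=f^{*}x_0^{*}(\ce)=B\ot_A E$. Consequently, $\alpha$ descends to an isomorphism of $B$-modules---and a fortiori of $A$-modules---$B\ot_A F\simeq B\ot_A E$.

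Next I would invoke $\g S2$ to pin down the $A$-module structure of $B$. Applying the exact functor $M\mapsto H^0(\co_Y\ot_A M)$ to the injection $A\hookrightarrow K$ shows that $B$ embeds in $H^0(\co_{Y_K})$, so $B$ is torsion-free; being in addition a finite module over the DVR $A$, it is free of some rank $b$, and nonemptiness of $Y$ (ensured by $\g S1$) forces $b\geq 1$. Restricting scalars in the previous isomorphism thus produces an $A$-module isomorphism $F^{\oplus b}\simeq E^{\oplus b}$.

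The remaining---and only---point requiring care is the descent from $F^{\oplus b}\simeq E^{\oplus b}$ back to $F\simeq E$. This is a standard application of the structure theorem for finitely generated modules over a DVR: both $F$ and $E$ have canonical decompositions as a free part plus a direct sum of cyclic torsion modules $A/(\pi^{n})$, and the multiplicities of each indecomposable summand of $F$ can be read off from those of $F^{\oplus b}$ by cancellation (equivalently, by Krull--Schmidt). One therefore concludes $F\simeq E$, and the lemma follows.
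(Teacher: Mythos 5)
Your argument is correct and follows essentially the same route as the paper: fix an isomorphism $\ph^*(\ce)\simeq\co_Y\ot_A F$, restrict along the $B$-point $y_0$ (using $\ph y_0=x_0 f$) to obtain $B\ot_A F\simeq B\ot_A E$, observe that $B$ is a finite free $A$-module of rank $b\geq 1$, and then cancel to get $F\simeq E$ by the structure theorem for finitely generated modules over a DVR. The paper's proof is identical in substance; it merely writes out the invariant-factor decompositions of $E$ and $\ov E$ explicitly to perform the cancellation, whereas you invoke Krull--Schmidt directly.
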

\begin{proof}We already know that $\ph^*(\ce)\simeq\co_{Y}\ot_A\ov E$ for {\it some} $A$-module $\ov E$, and all we are required to show is that $\ov E\simeq E$. Using the $B$-point $y_0$, we can say that $B\ot_A
\ov E\simeq B\ot_AE$ as $B$-modules.  Now, agreeing to write $A(\ell)=A/(\pi^\ell)$, we have 
\[
E\simeq A^r\op A(\de_1)\op\cdots\op A(\de_m)\quad\text{and}\quad \ov E\simeq A^{\ov r}\op A(\ov \de_1)\op\cdots\op A(\ov \de_{\ov m});
\]
here $m$ is either a positive integer, in which case $\de_1\ge\cdots\ge\de_m$ are also positive integers, or $m=0$ and the factors $A(\de_i)$ are to be dropped, and analogous considerations are in force for $\ov E$. 
As $B$ is free over $A$, of rank $s\ge1$ say, we have, as $A$-modules,  
\[
B\ot_AE\simeq A^{rs}\op \underbrace{A(\de_1)\op\cdots\op A(\de_1)}_{s}\op\cdots\op \underbrace{A(\de_m)\op\cdots\op A(\de_m)}_{s}
\]
and 
\[B\ot_A\ov E\simeq A^{\ov rs}\op \underbrace{A(\ov \de_1)\op\cdots\op A(\ov\de_1)}_{s}\op\cdots\op \underbrace{A(\ov \de_{\ov m})\op\cdots\op A(\ov \de_{\ov m})}_{s},
\]
so that the isomorphism $B\ot_AE\simeq B\ot_A\ov E$ implies that  $r=\ov r$, $m={\ov m}$ and $\de_i=\ov\de_i$. 
\end{proof}
From now on we write 
\[
\boxed{E:=\ce|_{x_0}}
\]  
and   let 
\[\boxed{
\tau:B\ot_AE\arou\sim H^0(\co_Y\ot_AE)
}\]
be the canonical isomorphism of $B$-modules. 

Of course one can find many isomorphisms $\co_Y\ot_AE\simeq\ph^*(\ce)$ and  we now  single out a special class of such. 
Using   $y_0$ and the fact that it ``extends'' $x_0$, 
we arrive at a canonical isomorphism 
\[
\ph^*(\ce)|_{y_0} \xymatrix{\ar[rr]^{\iota}_\sim&&}B\ot_AE
\] 
which, denoting by ${\rm ev}_{y_0}$ the ``evaluation'' $H^0(\ph^*(\ce))\to\ph^*(\ce)|_{y_0}$,   allows us to introduce the arrow of $B$-modules 
\[
\xymatrix{ H^0(\ph^*(\ce)) \ar[rr]^{\iota\ev_{y_0}}&& B\ot_AE.}
\]
Because $B\ot_AE\simeq H^0(\co_Y\ot_AE)$, 
the reader should note that  \emph{$\ev_{y_0}$ is an isomorphism}.
Hence, if $\al:\co_Y\ot_AE\to\ph^*(\ce)$ is an isomorphism, we arrive at a commutative diagram 
\begin{equation}\label{22.03.2018--1}
\xymatrix{H^0(\co_Y\ot_AE)\ar[rr]^-{H^0(\al)}&& H^0(\ph^*(\ce))\ar[d]^{\iota  {\rm ev}_{y_0}} \\ 
B\ot_AE\ar[u]^\tau\ar[rr] && B\ot_AE.
}
\end{equation}

\begin{dfn}\label{06.09.2018--1}\begin{enumerate}\item A global section $s\in H^0(\ph^*(\ce))$ is said to be \emph{conservative at $y_0$} if it is taken to $1\ot E$ under 
\[
\xymatrix{   H^0(\ph^*(\ce))        \ar[rr]^{\iota{\rm ev}_{y_0}}  && B\ot_AE. }
\] 

\item An isomorphism $\al:\co_Y\ot_AE\to\ph^*(\ce)$ is called {\it adapted to $y_0$} if the lower horizontal arrow in diagram \eqref{22.03.2018--1} is the identity. \end{enumerate}
\end{dfn}

\begin{rmk}Note that the canonical arrow $E\to 1\ot E$ is injective by faithful flatness of $A\to B$ and \cite[7.5(i), p.49]{matsumura}. 
\end{rmk}

\begin{lem}\label{23.03.2018--1}Isomorphisms adapted to $y_0$ always exist.
\end{lem}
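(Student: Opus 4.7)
The plan is to start from any isomorphism $\al_0\colon\co_Y\ot_AE\to\ph^*(\ce)$ (which exists by Lemma \ref{20.02.2019--4}), measure how far it is from being adapted by a $B$-linear automorphism $\ga_0$ of $B\ot_AE$, and then ``correct'' $\al_0$ by the inverse of $\ga_0$, lifted to an automorphism of $\co_Y\ot_AE$ via Lemma \ref{29.03.2018--1}.

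More precisely, I would first define
\[
\ga_0\;:=\;(\iota\,\ev_{y_0})\circ H^0(\al_0)\circ \tau\colon B\ot_AE\aro B\ot_AE,
\]
which is a $B$-linear endomorphism and is in fact an automorphism because each of its three constituents is: $\tau$ is bijective by $H^0$-flatness, $H^0(\al_0)$ is bijective because $\al_0$ is, and $\iota\,\ev_{y_0}$ is bijective since $\iota$ is and $\ev_{y_0}$ becomes the identity on $B\ot_AE$ after identifying $H^0(\ph^*\ce)$ and $\ph^*(\ce)|_{y_0}$ with $B\ot_AE$ via $\al_0$. Next, invoking Lemma \ref{29.03.2018--1}, I would view $\co_Y\ot_AE$ as $\co_Y\ot_B H^0(\co_Y\ot_AE)$; under this identification (combined with $\tau^{-1}$) any $B$-linear automorphism $\si$ of $B\ot_AE$ extends to a $\co_Y$-linear automorphism $\wt\si:=\id_{\co_Y}\ot_B\si$ of $\co_Y\ot_AE$. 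Applying this to $\ga_0^{-1}$, I set
\[
\al\;:=\;\al_0\circ\wt{\ga_0^{-1}}\colon \co_Y\ot_AE\aro \ph^*(\ce).
\]

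The remaining check is that $\al$ is adapted to $y_0$, i.e.\ that $(\iota\,\ev_{y_0})\circ H^0(\al)\circ\tau=\id$. The crux is the compatibility $H^0(\wt\si)\circ\tau=\tau\circ\si$, which follows from the functoriality of the identification in Lemma \ref{29.03.2018--1}: taking $H^0$ of $\wt{\ga_0^{-1}}=\id_{\co_Y}\ot_B\ga_0^{-1}$ recovers $\ga_0^{-1}$ on $B\ot_AE\cong H^0(\co_Y\ot_AE)$ (modulo $\tau$). Plugging this in,
\[
(\iota\,\ev_{y_0})\circ H^0(\al_0)\circ H^0(\wt{\ga_0^{-1}})\circ \tau
\;=\;(\iota\,\ev_{y_0})\circ H^0(\al_0)\circ \tau\circ \ga_0^{-1}
\;=\;\ga_0\circ\ga_0^{-1}\;=\;\id,
\]
as required.

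The main (minor) obstacle is the bookkeeping in the verification $H^0(\wt\si)\circ\tau=\tau\circ\si$: one needs the functorial nature of the isomorphism of Lemma \ref{29.03.2018--1} and the fact that $H^0$ applied to the canonical map $\co_Y\ot_B H^0(\co_Y\ot_AE)\to\co_Y\ot_AE$ is the identity on $H^0(\co_Y\ot_AE)$. There is no substantive geometric difficulty; the argument is essentially a $B$-linear twist of the chosen $\al_0$.
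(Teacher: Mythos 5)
Your proof is correct and follows essentially the same route as the paper's: choose any isomorphism $\al_0$, measure its failure to be adapted by a $B$-automorphism (your $\ga_0$, the paper's $b$ up to conjugation by $\tau$), extend the inverse of that automorphism to an $\co_Y$-automorphism of $\co_Y\ot_AE$ by base change along $B\to\co_Y$, and post-compose. The only cosmetic difference is that you phrase the extension step via Lemma \ref{29.03.2018--1}, whereas the paper speaks directly of pulling back along the structure morphism $Y\to\spc B$; these are the same operation.
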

\begin{proof}We first show that for an automorphism of $B$-modules 
\[
\xymatrix{H^0(\co_{Y}\ot_AE)\ar[rr]^c_\sim&&H^0(\co_{Y}\ot_AE)},
\]
it is   possible to find an automorphism of $\co_Y$-modules  
\[\xymatrix{\co_Y\ot_AE\ar[rr]^{\ga}_\sim&& \co_{Y}\ot_AE}
\]
such that $H^0(\ga)=c$. Write $\tilde c=\tau^{-1}c\tau$; this is an automorphism of $B\ot_AE$.  Pulling $\tilde c$ back to $Y$ via the structure morphism, we find an automorphism $\ga:\co_{Y}\ot_AE\stackrel\sim\to\co_{Y}\ot_AE$ such that $H^0(\ga)(\tau(1\ot e))=\tau(\tilde c(1\ot e))$. This implies that $H^0(\ga)(\tau(1\ot e))= c\tau(1\ot e)$. Since $\{\tau(1\ot e)\,:\,e\in E\}$ is a set of generators of $H^0(\co_{Y}\ot_AE)$, we see that $H^0(\ga)=c$. 

We now choose any isomorphism $\al:\co_Y\ot_AE\stackrel\sim\to\ph^*(\ce)$ and let $b$ be the composition 
\[
\xymatrix{H^0(\co_{Y}\ot_AE)\ar[rrr]^{\tau\circ \iota{\rm ev}_{y_0}\circ H^0(\al)}&&& H^0(\co_{Y}\ot_AE)}.
\]
Let $\be:\co_{Y}\ot_AE\stackrel\sim\to\co_{Y}\ot_AE$ induce $b^{-1}$ on global sections; it follows that $\al\be$ is adapted to $y_0$. 
\end{proof}

\begin{lem}\label{26.03.2018--1} If $\al:\co_{Y}\ot_AE\stackrel\sim\to\ph^*(\ce)$ is adapted  to $y_0$, then $H^0(\al)\tau$ defines a bijection between $1\ot_AE$ and the conservative sections of $H^0(\ph^*(\ce))$. 
\end{lem}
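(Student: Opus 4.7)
The plan is to translate the adapted condition into a simple identity of isomorphisms and then read off the bijection.

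First I would record that, by the definition of ``adapted to $y_0$'' (Definition \ref{06.09.2018--1}(2)), the commutativity of diagram \eqref{22.03.2018--1} with identity on the bottom row precisely says
\[
\iota\,\ev_{y_0}\circ H^0(\al)\circ \tau \;=\; \id_{B\ot_AE}.
\]
Since $\tau$ is an isomorphism (by $H^0$-flatness) and $H^0(\al)$ is an isomorphism (since $\al$ is), it follows that $\iota\,\ev_{y_0}\colon H^0(\ph^*(\ce))\to B\ot_AE$ is itself an isomorphism, in fact the two-sided inverse of $H^0(\al)\tau$. (This is consistent with the paper's remark just before the definition that $\ev_{y_0}$ is an isomorphism.)

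With this in hand, the verification is mechanical. For injectivity of the restriction $H^0(\al)\tau|_{1\ot E}$, it suffices to note that $H^0(\al)\tau$ is already a bijection on the whole of $B\ot_AE$. To see that the image lies in the conservative sections, take any $1\ot e\in 1\ot E$ and set $s=H^0(\al)\tau(1\ot e)$; then $\iota\,\ev_{y_0}(s)=1\ot e\in 1\ot E$, which is exactly the conservativity condition of Definition \ref{06.09.2018--1}(1). Conversely, given a conservative section $s\in H^0(\ph^*(\ce))$, by definition $\iota\,\ev_{y_0}(s)\in 1\ot E$, say $\iota\,\ev_{y_0}(s)=1\ot e$; applying the inverse isomorphism $H^0(\al)\tau$ to this equality yields $s=H^0(\al)\tau(1\ot e)$, so $s$ lies in the image of $1\ot E$.

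There is no real obstacle: the proof is essentially bookkeeping once one notices that the adapted condition makes $H^0(\al)\tau$ and $\iota\,\ev_{y_0}$ mutually inverse, so the bijection between $1\ot E$ and conservative sections is just the restriction of this isomorphism to the submodule $1\ot E\subset B\ot_AE$ cut out by the conservativity condition.
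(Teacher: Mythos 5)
Your proof is correct and takes essentially the same approach as the paper: both exploit that the adapted condition makes $\iota\,\ev_{y_0}\circ H^0(\al)\circ\tau$ the identity, deduce that $H^0(\al)\tau$ sends $1\ot E$ into the conservative sections, and then run the same argument in reverse (applying $\iota\,\ev_{y_0}$ to a conservative section) to obtain surjectivity; you merely phrase the key fact a little more crisply as saying $H^0(\al)\tau$ and $\iota\,\ev_{y_0}$ are mutually inverse.
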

\begin{proof}
Because $\al$ is adapted to $y_0$, we see that $\iota\ev_{y_0}\left(H^0(\al)\tau(1\ot e)\right)=1\ot e$, which means that $H^0(\al)\tau$ sends $1\ot E$ into the conservative sections, so that  we only need to show that \emph{any} conservative section $s$ is of the form $H^0(\al)\tau(1\ot e)$. 
We write $s=H^0(\al)\tau(v)$ with $v\in B\ot_AE$. As $s$ is conservative $\iota\ev_{y_0}(s)= 1\ot e$ for some $e\in E$; as $\al$ is adapted, $\iota\ev_{y_0}\big(H^0(\al)\tau(v)\big)=v$, and hence $v=1\ot e$.  
\end{proof}

Granted these preparations, we can now have a better control on kernels and cokernels     in $\g T_\ph$. 

\begin{prp}\label{05.04.2018--3} Let $\ov \ce$ be another object of $\g T_\ph$ and $u:\ce\to\ov\ce$  a morphism of $\co_X$-modules.  Write $\ov E$ for the $A$-module $\ov\ce|_{x_0}$, $\ov\tau$ and $\ov\iota$ for the the canonical  morphisms $B\ot_A\ov E\to H^0(\co_Y\ot_A\ov E)$ and $\ph^*(\ov\ce)|_{y_0}\to B\ot_A\ov E$. Let  $\al:\co_{Y}\ot_AE\stackrel\sim\to\ph^*(\ce)$ and $\ov\al:\co_{Y}\ot_A\ov E\stackrel\sim\to\ph^*(\ov\ce)$ be adapted to the point $y_0$. The following claims hold true.

\begin{enumerate}\item The arrow $H^0(\ph^*u)$ takes conservative sections to conservative sections.

\item There exists a morphism of $A$-modules $u_0:E\to \ov E$ such that 
\[
\xymatrix{\ph^*\ce\ar[rr]^{\ph^*u}&& \ph^*\ov\ce 
\\
\ar[u]^\al\co_{Y}\ot_AE\ar[rr]_-{\id\otu A u_0}&&\co_{Y}\ot_A\ov E\ar[u]_{\ov\al}.
}\]
commutes. Put differently, $\ph^*(u)$ ``descends'' to $A$. 
\item The coherent sheaves  ${\rm Ker}(\ph^*u)$ and ${\rm Coker}(\ph^*u)$ are trivial relatively to $A$.
\end{enumerate}
\end{prp}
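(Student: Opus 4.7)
The heart of the argument is part (1), from which (2) follows almost formally via Lemma \ref{26.03.2018--1} and (3) then follows from the flatness built into the notion of $H^0$-flat. I would carry out the three parts in order.

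For part (1), the key observation is that evaluation at $y_0$ commutes with $H^0(\ph^*u)$: given a section $s\in H^0(\ph^*\ce)$, we have $\ev_{y_0}\bigl(H^0(\ph^*u)(s)\bigr)=(\ph^*u)|_{y_0}\bigl(\ev_{y_0}(s)\bigr)$. Now $y_0$ is a $B$-point of $Y$ above $x_0$, so $\ph y_0$ is the base-change of $x_0$ to $B$ and hence $(\ph^*u)|_{y_0}$, under the canonical identifications $\iota$ and $\ov\iota$, becomes the map of $B$-modules $\mm{id}_B\ot u|_{x_0}:B\ot_AE\to B\ot_A\ov E$. Thus if $s$ is conservative, $\iota\ev_{y_0}(s)=1\ot e$, and a direct chase gives $\ov\iota\ev_{y_0}\bigl(H^0(\ph^*u)(s)\bigr)=1\ot u|_{x_0}(e)\in 1\ot \ov E$, so $H^0(\ph^*u)(s)$ is conservative.

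For part (2), set $u_0:=u|_{x_0}$. By Lemma \ref{26.03.2018--1}, the assignment $e\mapsto H^0(\al)\tau(1\ot e)$ is a bijection between $E$ and the conservative sections of $H^0(\ph^*\ce)$, and similarly for $\ov\al$. The computation in (1) says that $H^0(\ph^*u)$ sends the conservative section attached to $e$ to the one attached to $u_0(e)$; equivalently, $H^0(\ph^*u\circ\al)\circ\tau(1\ot e)=H^0(\ov\al\circ(\id\ot u_0))\circ\tau(1\ot e)$. Thus the two $\co_Y$-linear maps $\ph^*u\circ\al$ and $\ov\al\circ(\id\ot u_0)$ agree on the image of $E$ under $e\mapsto\tau(1\ot e)$. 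Using Lemma \ref{29.03.2018--1}, which identifies $\co_Y\ot_AE$ with $\co_Y\ot_BH^0(\co_Y\ot_AE)$, a $\co_Y$-linear map out of $\co_Y\ot_AE$ is determined by its restriction to $E\subset H^0(\co_Y\ot_AE)$; hence the two maps coincide, giving commutativity of the displayed square.

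For part (3), since $Y$ is $H^0$-flat over $A$ it is in particular $A$-flat, so $\co_Y\ot_A(-)$ is exact on $A$-modules. Using $\al$ and $\ov\al$ to identify $\ph^*u$ with $\id_{\co_Y}\ot u_0$, we therefore obtain $\mm{Ker}(\ph^*u)\simeq\co_Y\ot_A\mm{Ker}(u_0)$ and $\mm{Coker}(\ph^*u)\simeq\co_Y\ot_A\mm{Coker}(u_0)$, both manifestly trivial relatively to $A$. The main obstacle I expect is not technical but notational: keeping track of the interplay between the identifications $\iota,\ov\iota,\tau,\ov\tau$ and the adaptedness of $\al,\ov\al$ so as to read the equality $(\ph^*u)|_{y_0}=\id_B\ot u|_{x_0}$ cleanly; once that is in place, parts (2) and (3) unfold mechanically.
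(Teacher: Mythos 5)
Your proof is correct and follows essentially the same route as the paper's: part (1) by noting that evaluation at $y_0$ intertwines $(\ph^*u)|_{y_0}$ with $\id_B\ot u|_{x_0}$, part (2) by matching both maps on the conservative generators via Lemma \ref{26.03.2018--1}, and part (3) by $A$-flatness of $Y$. The one small (and welcome) difference is that you identify $u_0$ concretely as $u|_{x_0}$ from the outset, whereas the paper produces $u_0(e)$ abstractly as the element determined by Lemma \ref{26.03.2018--1}; also, your appeal to Lemma \ref{29.03.2018--1} at the end of (2) is slightly heavier than needed, since the sections $\tau(1\ot e)$ already generate $\co_Y\ot_AE$ for the trivial reason that this sheaf is pulled back from $\spc A$.
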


\begin{proof}
(1) 
Let $s\in H^0(\ph^*(\ce))$ be conservative at $y_0$, so that $\iota{\rm ev}_{y_0}(s)=1\ot e$ for some $e\in E$. Now 
\[
\ev_{y_0}\left\{H^0(\ph^*u)(s)\right\}=(\ph^*u)|_{y_0}\left\{\ev_{y_0}(s)\right\}.
\]
But, 
\[
\ov\iota\circ \left((\ph^*u)|_{y_0}\right)=\left(\id_B\ot_Au|_{x_0}\right)\circ\iota,
\]
which shows that $\ov\iota\ev_{y_0}\left\{H^0(\ph^*(u))(s)\right\}=\left(\id_B\ot_Au|_{x_0}\right)\circ\iota(\ev_{y_0}(s))=\left(\id_B\ot_Au|_{x_0}\right)(1\ot e)$.

(2) Let $e\in E$. We know that $s_e:=H^0(\al)\tau(1\ot e)$ is conservative at $y_0$ (Lemma \ref{26.03.2018--1}), so that  $
H^0\left( \ph^*(u)\right) (s_e)$ 
is equally conservative (by part (1)). Employing again Lemma \ref{26.03.2018--1}, we guarantee that  
\begin{equation}\label{05.09.2018--1}
H^0(\ph^*(u))(s_e)=H^0(\ov\al)\ov\tau(1\ot u_0(e))
\end{equation}
for some $u_0(e)\in \ov E$. Since  $\ov E\to B\ot_A\ov E$ is injective,       $u_0(e)$ is uniquely determined. This allows us to define a map $u_0:E\to \ov E$ which is easily seen to be $A$-linear. Now, 
\[ \xymatrix{\co_{Y}\ot_AE\ar[rr]^{\id\ot_Au_0}&&\co_{Y}\ot_A\ov E}\] 
 sends  $\tau(1\ot e)$ to $\ov\tau(1\ot u_0(e))$, and hence  
\[
\xymatrix{ \ph^*\ce\ar[rrr]^{\ov\al\circ (\id\ot_Au_0)\circ  \al^{-1}}&&& \ph^*\ov\ce} 
\]
sends $s_e=H^0(\al)\tau(1\ot e)$ to $H^0(\ov\al)\ov\tau(1\ot u_0(e))$. In conclusion, replacing $\ph^*(u)$ by $\ov\al\circ({\rm id}\ot_Au_0)\circ\al^{-1}$ in eq. \eqref{05.09.2018--1} still produces a true statement. Therefore $\ov\al\circ (\id\ot_Au_0)\circ  \al^{-1}=\ph^*(u)$.

(3) This follows from the fact that both kernel and cokernel of $\id\ot_Au_0$ are trivial relative to $A$ since $Y$ is flat over $A$.
\end{proof}

\begin{rmk}\label{01.02.2019--1}Let  $X=\spc {\pos\CC t}$ and $Y=\spc B$, where $B=\pos\CC {\sqrt t}$. Let $\ce=\co_X$ and define $v:\co_Y\to\co_Y$ as multiplication by $\sqrt t$. Clearly ${\rm Coker}(v)$ cannot be trivial relatively to $A$; of course this does not contradict  Proposition \ref{05.04.2018--3} since $v$ is not induced by any morphism $\co_X\to\co_X$.   
\end{rmk}

\section{Further properties of the category of coherent modules trivialized by a proper morphism}\label{04.07.2018--1}
In this section, we let $X$ be a proper and flat  $A$-scheme with {\it reduced fibres}, and $x_0$ an $A$-point of $X$. Let  $\ph:Y\to X$ an object of $\g S(X,x_0)$. (Recall that in such a situation $X$ is connected, see Remark \ref{05.02.2019--1}.) 
If $B=H^0(\co_Y)$, we let $y_0$ stand for the $B$-point of $Y$ above $x_0$. Let us gather some simple properties concerning the category $\g T_\ph$ of Definition \ref{05.09.2018--3}.


\begin{lem}\label{27.04.2017--3}Let $u:\ce\to\cf$ be arrow of $\g T_\ph$. Then  $\cC={\rm Coker}(u)$ belongs to $\g T_\ph$.  
\end{lem}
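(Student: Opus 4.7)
The plan is to deduce this essentially immediately from Proposition \ref{05.04.2018--3}(3), using the right exactness of pull-back.

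First I would observe that $\cC = \mm{Coker}(u)$ is a coherent $\co_X$-module (since $X$ is noetherian), so the only thing to verify is that $\ph^*(\cC)$ is trivial relatively to $A$ in the sense of Definition \ref{20.02.2019--3}. Because the inverse image functor $\ph^*$ is right exact, we have a canonical isomorphism
\[
\ph^*(\cC) \;\simeq\; \mm{Coker}\bigl(\ph^*(u)\colon \ph^*(\ce)\to \ph^*(\cf)\bigr).
\]
Now since both $\ce$ and $\cf$ lie in $\g T_\ph$, the hypothesis of Proposition \ref{05.04.2018--3} is satisfied, and its part (3) tells us precisely that $\mm{Coker}(\ph^*(u))$ is trivial relatively to $A$. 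Combining this with the displayed isomorphism gives $\ph^*(\cC) \in \bb{coh}(Y)$ trivial relatively to $A$, which is exactly the condition for $\cC \in \g T_\ph$.

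The only subtlety worth flagging is the asymmetry with kernels: the analogous statement for $\mm{Ker}(u)$ is not immediate because $\ph^*$ need not be left exact, so one cannot identify $\ph^*(\mm{Ker}(u))$ with $\mm{Ker}(\ph^*(u))$. This is presumably why the lemma at this stage only asserts closure under cokernels; the handling of kernels (and hence the abelian structure on $\g T_\ph$) should require extra hypotheses on $X$ and will likely be established later in Section \ref{04.07.2018--1} via Theorem \ref{06.04.2018--1}. For the present statement, however, no such finesse is needed — the one-line reduction above suffices.
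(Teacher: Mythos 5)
Your proof is correct and essentially identical to the paper's: both invoke right exactness of $\ph^*$ to identify $\ph^*(\mathrm{Coker}(u))$ with $\mathrm{Coker}(\ph^*(u))$ and then apply Proposition \ref{05.04.2018--3}(3). Your remark about the asymmetry with kernels is accurate and matches the paper's subsequent handling via Theorem \ref{06.04.2018--1}.
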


\begin{proof} As ${\rm Coker}(\ph^*u)\simeq \ph^*({\rm Coker}(u))$ this is a straightforward consequence of Proposition \ref{05.04.2018--3}-(3). 
\end{proof}

\begin{lem}\label{26.04.2017--1} If  $\ce\in\g T_\ph$ is $A$-flat, then $\ce$ is a vector bundle.  
\end{lem}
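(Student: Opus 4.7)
The plan is to reduce the question of local freeness of $\ce$ on $X$ to that of $\ce_k$ on the special fibre $X_k$, and then to upgrade the conclusion by means of the local criterion for flatness. First I would invoke Lemma~\ref{20.02.2019--4} to fix an isomorphism $\ph^*(\ce)\simeq\co_Y\ot_A E$ with $E:=\ce|_{x_0}$. Base-changing along $A\to k$ produces the isomorphism $\ph_k^*(\ce_k)\simeq\co_{Y_k}\ot_k(E\ot_A k)$, so that $\ph_k^*(\ce_k)$ is a free $\co_{Y_k}$-module of constant rank $n:=\dim_k(E\ot_A k)$.

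From this I would deduce that $\ce_k$ itself is locally free on $X_k$. Since $\ph$ is surjective (as $Y\in\g S$, see Definition~\ref{26.06.2018--1}) and surjectivity is preserved by the base change $\cdot\ot_A k$, the morphism $\ph_k\colon Y_k\to X_k$ is also surjective. For each $x\in X_k$ and any preimage $y\in\ph_k^{-1}(x)$, the standard identity
\[
\dim_{\kappa(y)}\ph_k^*(\ce_k)\ot\kappa(y)=\dim_{\kappa(x)}\ce_k\ot\kappa(x),
\]
combined with the just-established freeness of $\ph_k^*(\ce_k)$, forces the fibre dimension of $\ce_k$ to be the constant $n$ throughout $X_k$. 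Since $X_k$ is reduced by the standing hypothesis on $X$, the classical fact that a coherent sheaf on a reduced noetherian scheme with locally constant fibre rank is locally free shows that $\ce_k$ is a vector bundle on $X_k$.

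To conclude, I would apply the local criterion for flatness stalkwise. Fix $x\in X$ and put $B:=\co_{X,x}$ and $M:=\ce_x$. Since $X$ is $A$-flat, $\pi$ is a non-zero-divisor on $B$; and since $\ce$ is $A$-flat by hypothesis, $\pi$ is also a non-zero-divisor on $M$. The local criterion for flatness---in the form which requires no completeness of $A$, only that $\pi$ be regular on both $B$ and $M$---then asserts that $M$ is $B$-flat if and only if $M/\pi M$ is $B/\pi B$-flat. But $M/\pi M=(\ce_k)_x$ and $B/\pi B=\co_{X_k,x}$, and the local freeness of $\ce_k$ just established yields the latter flatness. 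Hence every stalk of $\ce$ is $\co_X$-flat, and a coherent stalkwise-flat sheaf is locally free.

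I expect the only real point of care to be invoking the correct (non-complete) version of the local criterion for flatness; all other ingredients are a direct assembly of Lemma~\ref{20.02.2019--4} with the classical statement about fibre ranks on reduced noetherian schemes.
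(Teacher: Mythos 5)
Your argument has a genuine gap at the generic fibre. The form of the local criterion for flatness that you invoke---for $B$ a noetherian local ring, $M$ a finite $B$-module, $\pi$ a non-zero-divisor on both $B$ and $M$, one has $M$ flat over $B$ if and only if $M/\pi M$ is flat over $B/\pi B$---is correct only when $\pi$ belongs to the maximal ideal of $B$, that is, when $x$ lies in the special fibre $X_k$. If $x$ lies in the generic fibre, then $\pi$ is a unit in $B=\co_{X,x}$, both sides of the alleged equivalence are vacuously satisfied (the zero module is flat over the zero ring), and yet $M$ need not be $B$-flat: already $B=k[s,t]_{(s,t)}$, $M=B/(t)$, $\pi=1$ gives a counterexample. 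So your stalkwise argument establishes local freeness of $\ce$ only in a neighbourhood of $X_k$ and says nothing about the generic fibre.

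The paper circumvents this by proving that \emph{both} $\ce_k$ and $\ce_K$ are vector bundles---this is precisely why the standing hypothesis requires reduced \emph{fibres}, plural---and then applying the fibre-by-fibre flatness criterion \ega{IV}{3}{11.3.10} once and for all. Your proof can be completed in either of two ways. One: run your argument a second time over $K$ (Lemma \ref{20.02.2019--4} gives $\ph_K^*(\ce_K)\simeq\co_{Y_K}\ot_K(E\ot_AK)$; $\ph_K$ is surjective; $X_K$ is reduced) to get that $\ce_K$ is a vector bundle, and then invoke the fibre-by-fibre criterion exactly as the paper does. Two: observe that the non-locally-free locus of $\ce$ is closed in $X$ and, by what you have shown, disjoint from $X_k$; since $X$ is proper over $A$, any non-empty closed subset of $X$ has closed image in $\spc A$ and must therefore meet $X_k$, so this locus is empty. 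Apart from this point, your explicit derivation of the local freeness of $\ce_k$ from Lemma \ref{20.02.2019--4}, the surjectivity of $\ph_k$ and the reducedness of $X_k$ is a correct unpacking of the citation to \cite[Remarks (a), p.226]{biswas-dos_santos11} that the paper uses.
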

\begin{proof}By assumption, $X_k$ and $X_K$ are reduced schemes. It follows from \cite[Remarks (a),  p.226]{biswas-dos_santos11} that  $\ce_k$ and $\ce_K$ are \emph{vector bundles}. (We note that the context in op.cit. presupposes the ground field to be algebraically closed, but this is not necessary for the proof to work.) 
Using the ``fibre-by-fibre'' flatness criterion [EGA ${\rm IV}_3$, 11.3.10, p.138], we are done.
\end{proof}

We are now ready to state the main structure theorem concerning  $\g T_\ph$. 

\begin{thm}\label{06.04.2018--1}The category $\g T_\ph$ is abelian, and the inclusion functor $\g T_\ph\to\bb{Coh}(X)$ is exact. 
\end{thm}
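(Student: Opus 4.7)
Being a full additive subcategory of the abelian category $\bb{coh}(X)$ that contains the zero object and is closed under finite direct sums (since $\ph^*$ commutes with $\oplus$ and $\co_Y\ot_A(F\op F')=(\co_Y\ot_A F)\op(\co_Y\ot_A F')$), the category $\g T_\ph$ will be abelian with exact inclusion into $\bb{coh}(X)$ once shown to be closed under the kernels and cokernels formed in $\bb{coh}(X)$. Closure under cokernels is Lemma~\ref{27.04.2017--3}, obtained by combining Proposition~\ref{05.04.2018--3}(3) with the right-exactness of $\ph^*$.

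For closure under kernels, take $u:\ce\to\ov\ce$ in $\g T_\ph$ and write $\ck=\ker u$, $\cI=\mm{image}(u)$, $\cC=\coker u$ (all in $\bb{coh}(X)$). Use Proposition~\ref{05.04.2018--3} to produce adapted isomorphisms $\al:\co_Y\ot_AE\stackrel{\sim}{\to}\ph^*\ce$ and $\ov\al:\co_Y\ot_A\ov E\stackrel{\sim}{\to}\ph^*\ov\ce$ (with $E=\ce|_{x_0}$, $\ov E=\ov\ce|_{x_0}$) and an $A$-linear morphism $u_0:E\to\ov E$ satisfying $\ph^*u=\ov\al\circ(\id_{\co_Y}\ot u_0)\circ\al^{-1}$. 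Setting $K_0=\ker u_0$, $I_0=\mm{image}(u_0)$, $C_0=\coker u_0$, flatness of $Y$ over $A$ yields the exact sequence
\[
0\to \co_Y\ot_A K_0\to \ph^*\ce\stackrel{\ph^*u}{\longrightarrow} \ph^*\ov\ce\to\co_Y\ot_A C_0\to 0
\]
on $Y$, whose outer terms are visibly trivial relatively to $A$. The plan is to factor $u=j\circ p$ through $\cI\subset\ov\ce$ and analyse the two short exact sequences $0\to\ck\to\ce\stackrel{p}{\to}\cI\to 0$ and $0\to\cI\stackrel{j}{\to}\ov\ce\to\cC\to 0$: first show $\cI\in\g T_\ph$ with $\ph^*\cI\simeq\co_Y\ot_A I_0$, then apply the very same reasoning to the first sequence to derive $\ph^*\ck\simeq\co_Y\ot_A K_0$, whence $\ck\in\g T_\ph$.

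The technical core is the identification $\ph^*\cI\simeq\co_Y\ot_A I_0$. Applying $\ph^*$ (right-exactly) to $0\to\cI\to\ov\ce\to\cC\to 0$ and using that $\cC\in\g T_\ph$ so that $\ph^*\cC\simeq\co_Y\ot_A C_0$ (Lemma~\ref{27.04.2017--3}), the surjection $\ph^*\ov\ce\twoheadrightarrow\ph^*\cC$ becomes $\id\ot(\ov E\twoheadrightarrow C_0)$ under $\ov\al$, whose kernel is $\co_Y\ot_A I_0$; this exhibits $\co_Y\ot_A I_0$ as the image of $\ph^*j:\ph^*\cI\to\ph^*\ov\ce$. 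The main obstacle is then the injectivity of $\ph^*j$ (equivalently, the vanishing of the $L_1\ph^*\cC$-type obstruction). My plan for attacking it is to move all the relevant sheaves into the setting of Lemma~\ref{29.03.2018--1}, comparing $H^0(\ph^*\cI)$ and $H^0(\co_Y\ot_A I_0)=B\ot_A I_0$ (with $B=H^0(\co_Y)$) by exploiting the $H^0$-flatness of $Y$ and the conservative-section framework of Definition~\ref{06.09.2018--1} and Lemma~\ref{26.03.2018--1}, and then transferring the resulting matching on global sections back to a sheaf isomorphism through a fibrewise Nakayama argument analogous to Lemma~\ref{03.05.2018--1} applied on the reduced fibres $X_K$ and $X_k$.
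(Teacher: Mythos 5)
Your reduction of the theorem to closure of $\g T_\ph$ under kernels and cokernels, the use of Lemma~\ref{27.04.2017--3} for cokernels, the factorisation of $u$ through its image, and the identification of the injectivity of $\ph^*j:\ph^*\cI\to\ph^*\ov\ce$ (i.e.\ the vanishing of $L_1\ph^*\cC$) as \emph{the} obstruction are all correct and match the paper's outline exactly. But at precisely this point your argument becomes a plan rather than a proof, and the plan as stated will not close the gap.

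The issue is that knowing that $H^0(\ph^*\cI)$ matches $B\ot_A I_0$ does not let you upgrade to the sheaf-level isomorphism $\ph^*\cI\simeq\co_Y\ot_A I_0$ by ``a fibrewise Nakayama argument analogous to Lemma~\ref{03.05.2018--1}.'' That lemma is about \emph{vector bundles} with trivial restrictions to $Y_K$ and $Y_k$, whereas $\cI$ (and $\ck$) may carry $\pi$-torsion, and you have as yet no control on $\ph^*\cI|_{Y_k}$ or $\ph^*\cI|_{Y_K}$ because you have not established that $\ph^*$ is exact on the relevant short exact sequence --- which is the very thing you are trying to prove. In other words, the argument is circular: the Nakayama step presupposes the exactness of pullback which you are trying to establish. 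What the paper actually does is prove a stand-alone result (Proposition~\ref{27.04.2017--1}) asserting that every object of $\g T_\ph$ is $\ph^*$-acyclic, so that $L_i\ph^*(\cC)=0$ for $i>0$ and the exact sequence $0\to\ph^*\cI\to\ph^*\ov\ce\to\ph^*\cC\to 0$ holds outright. That acyclicity is proved by a genuinely new input --- the fact (from \cite[Remarks (a), p.226]{biswas-dos_santos11}) that any $\ce\in\g T_\ph$ killed by $\pi$ is a locally free $\co_{X_k}$-module, combined with an induction on the length $t(\ce)$ of the $\pi$-torsion, using the sequence $0\to\pi\ce\to\ce\to\ce/\pi\ce\to0$. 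None of this appears in your proposal, and without it the exactness of $\ph^*$ on $0\to\cI\to\ov\ce\to\cC\to0$ is not available.

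To repair the argument you should isolate and prove the acyclicity statement first: show that every $\ce\in\g T_\ph$ satisfies $L_i\ph^*(\ce)=0$ for $i>0$, starting with the torsion-free case (where $\ce$ is a vector bundle by Lemma~\ref{26.04.2017--1}) and the case $\pi\ce=0$ (where local freeness over $X_k$ gives acyclicity by comparison with $0\to\co_{X}^r\stackrel{\pi}{\to}\co_X^r\to\co_{X_k}^r\to 0$), and then run the induction on torsion length. Once acyclicity is in hand, your two short exact sequences plus Proposition~\ref{05.04.2018--3}(3) do close the argument exactly as you sketched.
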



The main point of the proof of Theorem \ref{06.04.2018--1}, given Lemma \ref{27.04.2017--3}, is to show that the kernel of an  arrow of $\g T_\ph$ is also in $\g T_\ph$ and, by Proposition \ref{05.04.2018--3}, all we need to do is to relate $\ph^*({\rm Ker}(u))$ and ${\rm Ker}(\ph^*(u))$ for an arrow $u$. 
 The argument takes up the ensuing lines.


First, recall that an $\co_X$-module $\cm$ is sait to be $\ph^*$-acyclic if the left derived functors $L_i\ph^*(\cm)$  vanish for $i>0$. (These functors are obtained by means of resolutions by flat $\co$-modules,  see \cite[99ff]{hartshorne66} . In addition, note that from \cite[Proposition 4.4, p.99]{hartshorne66}, if $\cm$ is  coherent, then $L_i\ph^*(\cm)$ is also  coherent.)

\begin{prp}\label{27.04.2017--1}  Any  $\ce$ in $\g T_\ph$  is $\ph^*$-acyclic.
\end{prp}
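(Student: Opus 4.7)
The goal is to prove $L_i\ph^*(\ce)=0$ for all $i\ge1$. By Lemma \ref{20.02.2019--4}, $\ph^*(\ce)\simeq\co_Y\ot_AE$ with $E=\ce|_{x_0}$. Because $A$ is a discrete valuation ring, $E$ admits a free resolution of length one,
\[
0\to A^m\to A^n\to E\to 0,
\]
and tensoring with $\co_Y$ (which is $A$-flat because $Y$ is) yields an exact sequence
\[
0\to\co_Y^m\to\co_Y^n\to\co_Y\ot_AE\simeq \ph^*(\ce)\to 0,
\]
a length-one locally free resolution of $\ph^*(\ce)$ over $\co_Y$. This same reasoning, applied on $X$, shows that the model sheaf $p^*(E)=\co_X\ot_AE$ (where $p\colon X\to\spc A$) is itself $\ph^*$-acyclic, via the resolution $0\to\co_X^m\to\co_X^n\to p^*(E)\to 0$ whose $\ph$-pullback is exact by $A$-flatness of $\co_Y$.

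\textbf{Reduction to $L_1=0$.} Take a surjection $u\colon\cp\twoheadrightarrow\ce$ with $\cp=\co_X^r$ (locally), and let $K=\ker(u)$. Since $\cp$ is $\ph^*$-acyclic, the long exact sequence of $L\ph^*$ gives
\[
L_{i+1}\ph^*(\ce)\simeq L_i\ph^*(K)\qquad(i\ge 1),
\]
together with the four-term sequence
\[
0\to L_1\ph^*(\ce)\to \ph^*(K)\to \ph^*(\cp)\to \ph^*(\ce)\to 0.
\]
By Proposition \ref{05.04.2018--3}, $\ph^*(u)$ descends to an $A$-linear map $u_0\colon\cp|_{x_0}\to E$, and $A$-flatness of $\co_Y$ yields $\ker(\ph^*u)\simeq\co_Y\ot_A\ker(u_0)$. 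Consequently $\ph^*(K)$ surjects onto $\co_Y\ot_A\ker(u_0)$ with kernel $L_1\ph^*(\ce)$. If this surjection is bijective, then $\ph^*(K)$ is trivial relative to $A$, so $K\in\g T_\ph$, and iterating (applying the same argument to $K$) gives $L_i\ph^*(K)=0$ for all $i\ge 1$, hence $L_{i+1}\ph^*(\ce)=0$.

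\textbf{The base case.} The main obstacle is to prove $L_1\ph^*(\ce)=0$, equivalently that $\ph^*(K)\hookrightarrow\ph^*(\cp)$ is injective. I would attack this by choosing $u$ adapted at $y_0$ in the sense of Definition \ref{06.09.2018--1}, so that $\ph^*(u)$ identifies with $\id_{\co_Y}\ot u_0$ in block form. Then, locally around $x_0$, the kernel $K$ can be tracked through the explicit conservative sections singled out by Lemma \ref{26.03.2018--1}, reducing the comparison between $\ph^*(K)$ and $\co_Y\ot_A\ker(u_0)$ to the analogous comparison for the model sheaf $p^*(E)$, where the corresponding map is an isomorphism by the $A$-flat resolution above. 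Globalizing this local comparison, aided by Proposition \ref{05.04.2018--3}(2), should produce the required injectivity. I expect this last step to be the main technical hurdle: the arbitrariness of $\ce$ as an $\co_X$-module (beyond what its pullback prescribes) has to be absorbed by the descent properties that $\ph^*$ grants us via Proposition \ref{05.04.2018--3}.
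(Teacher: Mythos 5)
Your proposal takes a genuinely different route from the paper's and, as you yourself anticipate, it has a real gap at the crucial step. In the ``Reduction to $L_1=0$'' paragraph you invoke Proposition \ref{05.04.2018--3} for the surjection $u\colon\cp\twoheadrightarrow\ce$, but that proposition only applies to arrows between two objects \emph{of} $\g T_\ph$. A locally free $\cp$ surjecting onto $\ce$ has no reason to lie in $\g T_\ph$; and if you insist on $\cp=\co_X^{\oplus r}$ \emph{globally} (which is in $\g T_\ph$) the existence of the surjection requires $\ce$ to be globally generated, which is not given -- only $\ph^*(\ce)$ is known to be generated by global sections. This means the identification $\ph^*(u)\simeq\id\otimes u_0$, and hence $\ker(\ph^*u)\simeq\co_Y\ot_A\ker(u_0)$, is not justified. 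More seriously, the base case $L_1\ph^*(\ce)=0$ is precisely what the proposition is about, and it is never proved: the final paragraph only describes a plan (``globalizing this local comparison \dots should produce the required injectivity''). So the dimension shift is a correct conditional reduction of $L_{i+1}$ to $L_1$, but the unconditional statement is not established.

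The paper sidesteps the resolution-of-$E$ idea and instead inducts on the torsion exponent $t(\ce)=\min\{m:\pi^m\cdot{\rm Tors}(\ce)=0\}$. The base $t(\ce)=0$ is Lemma \ref{26.04.2017--1} (such $\ce$ is a vector bundle). The step $\pi\ce=0$ is handled separately: $\ce$ is then a locally free $\co_{X_k}$-module, and acyclicity is checked over a faithfully flat cover on which $\ce$ becomes free, using the sequence $0\to\co_{\ov X}^r\xrightarrow{\pi}\co_{\ov X}^r\to\co_{\ov X_k}^r\to0$ and descent. For general $\ce$ one uses $0\to\pi\ce\to\ce\to\ce/\pi\ce\to0$: the right-hand term is killed by $\pi$ and hence acyclic, so $\ph^*(\pi\ce)=\ker(\ph^*\si)$ with $\si\colon\ce\to\ce/\pi\ce$ an arrow in $\g T_\ph$, and Proposition \ref{05.04.2018--3}(3) -- applied legitimately, since both objects are in $\g T_\ph$ -- shows $\pi\ce\in\g T_\ph$; since $t(\pi\ce)<t(\ce)$, the induction closes. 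This exploits the $\pi$-torsion structure of the d.v.r.\ rather than a free resolution of $E$, and is what lets the paper bypass the base case your plan gets stuck on.
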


\begin{proof}
Let us first assume that $\pi\ce=0$ so that $\ce$ is by assumption an $\co_{X_k}$-module which becomes trivial after being pulled-back to $Y_k$. 
 Then, we know that $\ce$ is locally isomorphic to $\co_{X_k}^r$ \cite[Remarks (a), p.226]{biswas-dos_santos11}. 
Hence, there exists a faithfully flat morphism $\al:\ov X\to X$ (given by the disjoint union of open subsets of $X$) such that $\al^*(\ce)\simeq\co_{\ov X_k}^r$. 

Employing the notations introduced in the following cartesian diagram 
\[
\xymatrix{\ov Y\ar[r]^\be \ar[d]_{\ov\ph}\ar@{}[dr]|{\square}& Y\ar[d]^\ph \\ 
\ov X\ar[r]_\al&X}
\]
we now prove that $\co_{\ov X_k}^r$ is $\ov\ph^*$-acyclic. The exact sequence 
\begin{equation}\label{14.04.2018--2}
0\aro\co_{\ov X}^r\arou\pi\co_{\ov X}^r\aro\co_{\ov X_k}^r\aro0,
\end{equation}
gives rise to an exact sequence 
\[
0\aro L_1\ov\ph^*(\co^r_{\ov X_k})\aro    \ov\ph^*(\co_{\ov X}^r)\arou\pi \ov\ph^*(\co_{\ov X}^r) \aro \ov \ph^*(\co_{\ov X_k}^r)\aro 0,
\]
and this proves that  $L_1\ov \ph^*(\co_{\ov X_k}^r)=0$ as $\ov \ph^*(\co_{\ov X}^r)\simeq\co_{\ov Y}^r$ is $A$-flat. From sequence \eqref{14.04.2018--2}, we have, for any $i\ge1$, another exact sequence  
\[
0\,=\,L_{i+1} \ov \ph^*(\co_{\ov X}^r)\aro  L_{i+1}\ov \ph^*(\co_{\ov X_k}^r)\aro  L_{i}\ov \ph^*(\co_{\ov X}^r)\,=\,0;
\]
this allows us to conclude that $\co_{\ov X_k}^r$ is $\ov \ph^*$-acyclic. As  $\al^*$  and $\be^*$ are exact,  we know that $\be^* \circ  L_i\ph^*\simeq L_i\ov \ph^*\circ   \al^*$, and therefore we can say that, for each $i>0$, 
\[\begin{split}
\be^*\circ L_i\ph^*(\ce)&=L_i\ov \ph^*\circ \al^*(\ce)\\
&\simeq L_i\ov \ph^* (\co_{\ov X_k}^r)
\\&=0.
\end{split}
\] 
But $L_i\ph^*(\ce)$ is coherent and $\be$ is faithfully flat, so $\be^*( L_i\ph^*(\ce))=0$ implies $L_i\ph^*(\ce)=0$ and we have finished the verification that $\ce$ is $\ph^*$-acyclic.

For any $\cm\in\bb{Coh}(X)$, let us agree to write \[{\rm Tors}(\cm)=\bigcup_{m\ge1}{\rm Ker}\,\pi^m:\cm\to\cm\] for the sheaf of sections annihilated by some power of $\pi$ and define 
\[
t(\cm):=\min\{m\in\NN\,:\,\pi^m\po{\rm Tors}(\cm)=0\}. 
\] 
We shall show by induction on $t(\ce)$ that $\ce$ is $\ph^*$-acyclic. If $t(\cm)=0$, then  ${\rm Tors}(\cm)=0$, so that, due to Lemma \ref{26.04.2017--1}, $\ce$ is a vector bundle and a fortiori  $\ph^*$-acyclic.

Now, suppose that $t(\ce)\ge1$ and that  for all $\cf\in\g T_\ph$ with $t(\cf)<t(\ce)$, the $\co_X$-module $\cf$ is $\ph^*$-acyclic. 

Write $\ce':=\pi\ce$. Then, if $e'\in{\rm Tors}(\ce')(U)$ over some affine $U$, it follows that $e'=\pi e$ where $e\in{\rm Tors}(\ce)(U)$. This being so, we conclude that $\pi^{t(\ce)-1}e'=0$ and hence that $t(\ce')<t(\ce)$. Next, we consider the exact sequence 
\begin{equation}\label{05.04.2018--4a}0\aro\ce'\arou \rho\ce\arou \si\ce''\aro0,
\end{equation}
where $\ce''=\ce/\pi\ce$. Since, 
\[
\ce''={\rm Coker}\,\ce\arou\pi\ce,
\]
it follows that $\ce''\in\g T_\ph$ (there is no need to apply Lemma  \ref{27.04.2017--3} here). 
Note that, $\pi\ce''=0$ and hence $\ce''$ is $\ph^*$-acyclic by the first step in the proof. Consequently, we have the exact sequence 
\[
0\aro\ph^*(\ce')\aro\ph^*(\ce)\arou{\ph^*(\si)}\ph^*(\ce'')\aro0,
\]
which says that $\ph^*(\ce')={\rm Ker}(\ph^*(\si))$. But both $\ce$ and $\ce''$ are in $\g T_\ph$ so that \ref{05.04.2018--3}-(3) guarantees that $\ce'$ lies in $\g T_\ph$. As $t(\ce')<t(\ce)$, we can say that $\ce'$ is $\ph^*$-acyclic and hence, using the exact sequence \eqref{05.04.2018--4a} above, it follows that $\ce$ is equally $\ph^*$-acyclic. 
\end{proof}

We can now present our 

\begin{proof}[Proof of Theorem \ref{06.04.2018--1}]Let $u:\ce\to\cf$ be an arrow of $\g T_\ph$. If $\ck=\mm{Ker}(u)$, $\cC=\mathrm{Coker}(u)$ and $\ci={\rm Im}(u)$,  then we  have exact sequences 
\[0\aro\ck\aro\ce\aro\ci\aro0\quad\text{and}\quad0\aro\ci\aro\cf\aro\cC\aro0.\]
Note that $\cC\in{\g T_\ph}$ by Lemma \ref{27.04.2017--3}, so that Proposition  \ref{27.04.2017--1} tells us that  $\cC$ is $\ph^*$-acyclic.  This implies that the sequence 
\[0\aro \ph^*(\ci)\aro \ph^*(\cf)\aro \ph^*(\cC)\aro0\]
is exact and Proposition \ref{05.04.2018--3} assures that $\ph^*(\ci)$ is trivial relatively to $A$. Otherwise said, $\ci$ belongs to ${\g T_\ph}$. 
Proposition  \ref{27.04.2017--1} then proves that $\ci$ is $\ph^*$-acyclic.
Applying the same reasoning, we conclude that $\ck\in{\g T_\ph}$. 
\end{proof}

\begin{cor}\label{19.07.2018--2}The functor $\bullet|_{x_0}:{\g T_\ph}\to\modules A$ is exact and faithful. 
\end{cor}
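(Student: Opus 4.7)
The plan is to prove exactness first and then derive faithfulness from it and the surjectivity of $\ph$.

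For exactness, I take an exact sequence $0\to\ce\to\cf\to\cg\to 0$ in $\g T_\ph$ and pull it back by $\ph$: by $\ph^*$-acyclicity of the objects of $\g T_\ph$ (Proposition \ref{27.04.2017--1}), the pulled-back sequence is still short exact on $Y$. Choosing adapted isomorphisms (Lemma \ref{23.03.2018--1}) $\ph^*\ce\simeq\co_Y\ot_AE$, and similarly for $\cf,\cg$, with $E=\ce|_{x_0}$, $F=\cf|_{x_0}$, $G=\cg|_{x_0}$, a careful inspection of Proposition \ref{05.04.2018--3}(2) and its proof---where injectivity of $\ov E\to B\ot_A\ov E$ pins down the descended morphism $u_0$---shows that the two arrows of the sequence take the form $\id\ot u|_{x_0}$ and $\id\ot v|_{x_0}$. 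Now the functor $\co_Y\ot_A-\colon\modules A\to\bb{coh}(Y)$ is exact (since $\co_Y$ is $A$-flat) and faithful (since $H^0(\co_Y\ot_A M)=B\ot_AM$ by $H^0$-flatness, and $B=H^0(\co_Y)$ is a nonzero free, hence faithfully flat, $A$-module); such a functor reflects exact sequences, so exactness of the sheaf-level sequence implies exactness of $0\to E\to F\to G\to 0$ in $\modules A$, which is precisely what is needed for $\bullet|_{x_0}$ to be exact.

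For faithfulness, let $u\colon\ce\to\cf$ in $\g T_\ph$ satisfy $u|_{x_0}=0$. Since ${\rm Im}(u)\in\g T_\ph$ by Theorem \ref{06.04.2018--1} and $\bullet|_{x_0}$ has just been shown exact, ${\rm Im}(u)|_{x_0}={\rm Im}(u|_{x_0})=0$. Applying Lemma \ref{20.02.2019--4} gives $\ph^*{\rm Im}(u)\simeq\co_Y\ot_A{\rm Im}(u)|_{x_0}=0$, so the pull-back sheaf vanishes. To conclude ${\rm Im}(u)=0$, fix any $x\in X$ and, using surjectivity ($\g S1$), pick $y\in\ph^{-1}(x)$; the fibre $(\ph^*{\rm Im}(u))(y)=k(y)\ot_{k(x)}{\rm Im}(u)(x)$ vanishes, and since $k(x)\hookrightarrow k(y)$ is a field extension this forces ${\rm Im}(u)(x)=0$. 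Nakayama applied to the finitely generated $\co_{X,x}$-module ${\rm Im}(u)_x$ then yields ${\rm Im}(u)_x=0$; as $x$ was arbitrary, ${\rm Im}(u)=0$ and so $u=0$.

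The one subtlety deserving attention is the identification of the $A$-linear map $u_0$ appearing in Proposition \ref{05.04.2018--3}(2) with $u|_{x_0}$, which is implicit in the proof via conservative sections and the injection $\ov E\hookrightarrow B\ot_A\ov E$; once this identification is made explicit, the remainder of the argument is a routine combination of $A$-flatness of $\co_Y$, $H^0$-flatness, faithful flatness of $B$ over $A$, and the elementary Nakayama/surjectivity argument.
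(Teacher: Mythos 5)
Your proof is correct, and it diverges from the paper's in two mild but genuine ways. For exactness, the paper observes that $\bullet|_{x_0}$, being a pullback, is automatically right exact (the paper says ``left exact,'' a slip) and reduces the problem to preservation of monomorphisms, which it then treats by tracking the descended morphism $u_0$ of Proposition \ref{05.04.2018--3} and restricting at $y_0$. You instead transport an entire short exact sequence across the adapted isomorphisms and appeal to the fact that the exact and faithful functor $\co_Y\ot_A-$ \emph{reflects} short exact sequences; this is a clean packaging, though it does rely on the identification $u_0=u|_{x_0}$, which --- as you correctly note --- must be extracted from the proof of Proposition \ref{05.04.2018--3}(1)--(2) rather than its statement (it follows because $\ov\al$ is adapted, so $\ov\iota\,\ev_{y_0}H^0(\ov\al)\ov\tau(1\ot u_0(e))=1\ot u_0(e)$, while part (1) gives $\ov\iota\,\ev_{y_0}H^0(\ph^*u)(s_e)=1\ot u|_{x_0}(e)$). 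For faithfulness, both proofs reach $\ph^*\cm=0$ with $\cm={\rm Im}(u)$ by combining Lemma \ref{20.02.2019--4} with exactness, and both then use surjectivity of $\ph$ to kill the fibres of $\cm$; where they part ways is the final step: the paper invokes \cite[II, Exercise 5.8(c)]{hartshorne77} together with reducedness of $X$, whereas your stalkwise Nakayama argument (fibre zero $\Rightarrow$ stalk zero for a coherent module over a noetherian local ring) is more elementary and does not need the reducedness of $X$ at all. So your route buys a cleaner last step at the cost of a slightly heavier (but entirely standard) category-theoretic ingredient in the exactness part; both are sound.
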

\begin{proof}We know that $\bullet|_{x_0}$ is \emph{left} exact, so that we only need to show that $\bullet|_{x_0}$ preserves monomorphisms. 
Let $u:\ce\to\ov\ce$ be such a  monomorphism in ${\g T_\ph}$ and denote $\ce|_{x_0}$, respectively $\ov\ce|_{x_0}$, by $E$, respectively $\ov E$.  Let $\al:\co_Y\ot_AE\stackrel\sim\to\ph^*(\ce)$  and $\ov\al:\co_Y\ot_A \ov E \to\ph^*(\ov\ce)$ be adapted to $y_0$ (see Definition \ref{06.09.2018--1} and Lemma \ref{23.03.2018--1}).  Using Proposition \ref{05.04.2018--3},  we   see that $\ov\al^{-1}\ph^*(u)\al=\id_{\co_Y}\ot_Au_0$ for some $u_0:E \to\ov E$. Because $\ph^*u$ is a monomorphism (due to Lemma \ref{27.04.2017--3} and Proposition  \ref{27.04.2017--1}), we conclude that $u_0$ has to be a monomorphism: otherwise the pull-back functor $\modules A\to\bb{Coh}(Y)$ would fail to be exact and faithful. 
This implies that $\ph^*(u)|_{y_0}:\ph^*(\ce)|_{y_0}\to\ph^*(\ov\ce)|_{y_0}$ is a monomorphism, which shows that $B\ot_Au|_{x_0}$ is a monomorphism. 
Since $B$ is a faithfully flat $A$-algebra, we conclude that $u|_{x_0}$ is also a monomorphism. 

We now verify that $\bullet|_{x_0}$ is faithful by showing that if $\ce\in{\g T_\ph}$ is such that  $\ce|_{x_0}=0$, then $\ce=0$. Under this assumption, we see that $\ph^*(\ce)|_{y_0}=0$ which, together with an isomorphism $\co_Y\ot_AE\simeq\ph^*(\ce)$, proves the equality $\ph^*(\ce)=0$. Now, given any point $\xi\in X$, one easily sees using the surjectivity of $\ph$ that $\ce({\xi})=0$. Because of \cite[II, Exercise 5.8(c), p.125]{hartshorne77} and the fact that $X$ is reduced \cite[Corollary 23.9, p.184]{matsumura} we conclude  that  $\ce=0$. 
\end{proof}

In possession of Theorem \ref{06.04.2018--1} and Corollary \ref{19.07.2018--2}, we can now apply the theory of Tannakian categories over $A$ to define certain fundamental group schemes.

\begin{dfn}[The fundamental and Galois groups]\label{14.01.2019--2} 
\begin{enumerate}[i)] \item We shall let $\g T^\circ_\ph$ denote the full subcategory of $\g T_\ph$ whose objects are vector bundles. 
\item Given  $\ce\in\g T_\ph^\circ$, we denote by $\langle \ce\,;\,\g T_\ph\rangle_\ot$ the full subcategory of $\g T_\ph$ having as objects subquotients of   $\mathbf T^{a,b}\ce$ for varying multi-indexes $a,b$. 
\item Following \cite{hai-dos_santos18}, by $\g T_\ph^{\rm tan}$ we mean the full subcategory of $\g T_\ph$ whose objects are 
\[ 
\left\{\cv\in\g T_\ph\,\,:\,\,\text{$\cv$ is a quotient of some $\ce\in\g T_\ph^\circ$}\right\}=\bigcup_{\ce\in\g T^\circ_\ph}{\rm Ob}\,\langle\ce\,,\,\g T_\ph\rangle_\ot.
\]
\item  Given $\ce\in\g T_\ph^\circ$, we let ${\rm Gal}'(\ce,\g T_\ph,x_0)$, or simply ${\rm Gal}'(\ce)$ if context prevents any misunderstanding, stand for the   group scheme obtained from $\langle\ce\,;\,\g T_\ph\rangle_\ot$ via Tannakian duality (see \cite[Theorem 1.2.6]{duong-hai18} or \cite[II.4.1.1]{saavedra72}) by the functor $\bullet|_{x_0}:\langle\ce\,;\,\g T_\ph\rangle_\ot\to\modules A$.  
\item We let $\Pi(X,\ph,x_0)$ be the       flat group scheme obtained from $\g T_\ph^{\rm tan}$ via Tannakian duality by the functor $\bullet|_{x_0}: \g T^{\rm tan}_\ph\to\modules A$
\end{enumerate}
\end{dfn}
 
To end, we raise a point which was naturally suggested in the elaboration of the arguments in this section. (It should be compared to Exercise of \cite[Exercise 5.8, p.125]{hartshorne77}.)
\begin{question}Let $Z$ be a  flat, reduced and  noetherian $A$-scheme. Call a dvr-point of $Z$ any morphism $z:{\rm Spec}\,R\to Z$ of $A$-schemes where $R$ is a dvr. Now, for $\cf\in\bb{coh}(Z)$ and $z:{\rm Spec}\,R\to Z$ a dvr-point of $Z$, define its ``type'' as the invariants of $\cf|_z$. Assume now that the type of $\cf$ is the same for all $z$. Is it true that $\cf$ trivial relatively to $A$? 
\end{question}

\section{The generic fibre of the group scheme $\Pi$}\label{11.04.2019--4}

In this section, we let $X$ be a proper and flat  $A$-scheme with {\it reduced fibres}, and $x_0$ an $A$-point of $X$. Let  $\ph:Y\to X$ an object of $\g S(X,x_0)$.

Let $\g T_{\ph_K}$ stand for the full subcategory of $\bb{coh}(X_K)$ whose set of objects is 
\[
\left\{\begin{array}{c}E\in\bb{coh}(X_K)\,:\,\text{$\ph_K^*E$ is trivial}\end{array}\right\}.
\]
We note that $\g T_{\ph_K}$ is a Tannakian category over $K$ which is neutralized by the functor $\bullet|_{x_{0K}}:\g T_{\ph_K}\to\modules K$. (The reader is right to object that we have not proved this last claim since in Section \ref{04.07.2018--1} we work with a discrete valuation ring, but the translation is evident and in fact much simpler.)

\begin{thm}\label{11.04.2019--1}Suppose that $X_K$ is normal and let  $E\in\g T_{\ph_K}$. Then, $E$ is essentially finite and  $\langle E\,;\,\g T_{\ph_K}\rangle_\ot=\langle E\,;\,\bb{EF}(X_K)  \rangle_\ot$. In particular, the $K$-group scheme associated to $\langle E\,;\,\g T_{\ph_K}\rangle_\ot$ via the fibre functor $\bullet|_{x_{0K}}$ is finite. 
\end{thm}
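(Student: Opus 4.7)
Over the field $K$, the setup of Sections~\ref{14.04.2018--3} and~\ref{04.07.2018--1} simplifies considerably: being ``trivial relatively to $K$'' just means being free of finite rank, so $\g T_{\ph_K}$ is the full subcategory of $\bb{coh}(X_K)$ consisting of those $E$ for which $\ph_K^*(E)$ is a free $\co_{Y_K}$-module. Lemma~\ref{26.04.2017--1}, in which the $A$-flatness hypothesis becomes automatic when $A$ is replaced by the field $K$, shows that every such $E$ is already a vector bundle; Theorem~\ref{06.04.2018--1} then yields that $\g T_{\ph_K}$ is an abelian subcategory of $\bb{coh}(X_K)$ whose inclusion is exact, and hence is closed under subobjects and quotients.

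The heart of the matter, and the main obstacle, is to check that $E\in\bb{EF}(X_K)$. Since $X_K$ is normal, proper and connected, carries the $K$-rational point $x_{0K}$, and $\ph_K\colon Y_K\to X_K$ is proper and surjective (by $\g S1$ and $\g S3$ of Definition~\ref{26.06.2018--1}) with $\ph_K^*(E)$ trivial, I would invoke the characterisation of essentially finite vector bundles established in \cite{biswas-dos_santos11}: a vector bundle on such an $X_K$ is essentially finite if and only if it becomes trivial after being pulled back along some proper surjective morphism. Should the cited statement be available only over algebraically closed base fields, one first extends scalars to $\ov K$, observes that $E\ot_K\ov K$ is still trivialised by the base-change of $\ph_K$, and then descends essential finiteness along the faithfully flat morphism $K\to\ov K$; verifying this descent is the delicate point, and is where the hypothesis that $X_K$ be normal plays a crucial role.

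With $\g T_{\ph_K}\subset\bb{EF}(X_K)$ in hand, the equality $\langle E\,;\,\g T_{\ph_K}\rangle_\ot = \langle E\,;\,\bb{EF}(X_K)\rangle_\ot$ is nearly formal. Both sides are read off as the full subcategory of $\bb{coh}(X_K)$ whose objects are subquotients of $\bb T^{a,b}E$ that happen to lie in $\g T_{\ph_K}$, respectively in $\bb{EF}(X_K)$. The inclusion $\supset$ follows because $\bb T^{a,b}E\in\g T_{\ph_K}$ and $\g T_{\ph_K}$ is stable under subquotients in $\bb{coh}(X_K)$ (first paragraph); the inclusion $\subset$ is the content of the previous paragraph. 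Finally, to see that the associated Tannakian group is finite, Nori's definition supplies a finite vector bundle $F$ on $X_K$ of which $E$ is a subquotient; hence $\langle E\,;\,\bb{EF}(X_K)\rangle_\ot\subset\langle F\,;\,\bb{EF}(X_K)\rangle_\ot$, and the latter has finite Tannakian group because only finitely many indecomposable summands occur in $\bigoplus_n F^{\ot n}$, whence the same holds for any of its sub-quotient groups.
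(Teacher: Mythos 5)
The decisive gap occurs in the first paragraph and propagates through the third. You write that since $\g T_{\ph_K}$ is an abelian subcategory of $\bb{coh}(X_K)$ with exact inclusion (Theorem~\ref{06.04.2018--1}), it is ``closed under subobjects and quotients.'' That inference is false. Exactness of the inclusion means that the kernel and cokernel of a morphism \emph{between two objects of $\g T_{\ph_K}$} agree with those computed in $\bb{coh}(X_K)$; it does not say that an arbitrary $\bb{coh}(X_K)$-subobject or quotient of an object of $\g T_{\ph_K}$ lies in $\g T_{\ph_K}$. (Already $\co_{X_K}\in\g T_{\ph_K}$ has ideal sheaves as subobjects in $\bb{coh}(X_K)$ which are plainly not in $\g T_{\ph_K}$.) Consequently your identification of $\langle E;\g T_{\ph_K}\rangle_\ot$ and $\langle E;\bb{EF}(X_K)\rangle_\ot$ as ``the subquotients of $\bb T^{a,b}E$ in $\bb{coh}(X_K)$ lying in $\g T_{\ph_K}$, resp.\ in $\bb{EF}(X_K)$'' misreads the definition (subquotients are taken inside the ambient abelian category $\g T_{\ph_K}$, resp.\ $\bb{EF}(X_K)$, not in $\bb{coh}(X_K)$), and the ``nearly formal'' equality is exactly the non-trivial point you have skipped.

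The content you are missing is what the paper supplies via Lemma~\ref{24.05.2018--1}: if $q\colon T\to Q$ is an epimorphism in $\bb{EF}(X_K)$ with $T=\bb T^{a,b}E$ a tensor power and $Q\in\bb{EF}(X_K)$, one must show $\ph_K^*(Q)$ is trivial. Since $\ph_K^*(T)\simeq\co_{Y_K}^{\oplus r}$, the pullback $\ph_K^*(q)$ exhibits $\ph_K^*(Q)$ as a quotient of a trivial bundle; because $Q$ is essentially finite, $\det Q$ becomes trivial after a further proper surjective pullback, and Lemma~\ref{24.05.2018--1} then forces $\ph_K^*(Q)$ to be trivial, i.e.\ $Q\in\g T_{\ph_K}$. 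Duality handles subobjects. This argument genuinely uses the structure of essentially finite bundles and cannot be replaced by a closure-under-subquotients assertion. (Your step establishing $\g T_{\ph_K}\subset\bb{EF}(X_K)$ is fine in spirit; the paper cites \cite{antei-mehta11} or \cite{tonini-zhang17}, which work over general fields and so avoid the descent-from-$\ov K$ detour you outline.)
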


\begin{proof}Let $\g C$ be the full subcategory of all vector bundles over $X_K$ which become trivial after being pulled back via some proper and surjective morphism. 
Then  \cite[Theorem 1]{antei-mehta11} or \cite[Theorem I]{tonini-zhang17} guarantee that   $\g C=\bb{EF}(X_K)$ and in particular $\g T_{\ph_K}$ is a full subcategory of $\bb{EF}(X_K)$. 
Following \cite[Corollary 2.3]{biswas-dos_santos11}, let us show that  $\langle E\,;\,\g C\rangle_\ot=\langle E\,;\,\g T_{\ph_K}\rangle_\ot$. Clearly, each object in $\langle E\,;\,\g T_{\ph_K}\rangle_\ot$ is also an object of $\langle E\,;\,\g C\rangle_\ot$.  Let then $T$ be some tensor power of $E$ and $q:T\to Q$ be a quotient morphism with $Q\in\g C$. Then, $\ph_K^*(q)$ is a quotient morphism from 
$\ph_K^*(T)\simeq \co_{Y_K}^{\op r}$  to  $\ph_K^*Q$. From Lemma \ref{24.05.2018--1}, we see that $\ph_K^*Q$ is trivial, which means that $Q\in\g T_{\ph_K}$.  Using duality of vector bundles, we then conclude that $\langle E\,;\,\g C\rangle_\ot=\langle E\,;\,\g T_{\ph_K}\rangle_\ot$. 
\end{proof}

The following result was employed in the proof of Theorem \ref{11.04.2019--1}. 

\begin{lem}\label{24.05.2018--1}Let $V$ be a proper scheme over a field and  $\co_V^{\op r}\to\cq$  a quotient morphisms to a vector bundle $\cq$ of rank $d$. Suppose that for a certain surjective and proper morphism $f:V'\to V$, the pull-back $f^*(\det\cq)$ is   trivial. Then $\cq$ is trivial. 
\end{lem}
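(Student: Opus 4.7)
My plan is to convert the assumed triviality of $f^*\det\cq$ into geometric information about the map to a Grassmannian classified by the given surjection, and then exploit connectedness.

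More precisely, the quotient $\co_V^{\oplus r}\twoheadrightarrow\cq$ defines a morphism $g:V\to\mathbf G$ where $\mathbf G=\mm{Gr}(d,r)$ parametrises rank-$d$ quotients of $\co^{\oplus r}$, in such a way that $\cq\simeq g^*\cq_{\mathbf G}$ for the universal quotient $\cq_{\mathbf G}$. Composing $g$ with the Pl\"ucker embedding $i:\mathbf G\hookrightarrow\PP^{N-1}$ (with $N=\binom{r}{d}$) yields a morphism $g':=i\circ g$ such that $(g')^*\co_\PP(1)\simeq\det\cq$, and pulling back further by $f$ gives $(g'\circ f)^*\co_\PP(1)\simeq\co_{V'}$.

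The crucial step will then be the following observation: any morphism $h:W\to\PP^{N-1}$ from a proper $k$-scheme $W$ satisfying $h^*\co_\PP(1)\simeq\co_W$ has finite image. Indeed, such an $h$ is given by $N$ sections of $\co_W$ which generate $\co_W$ pointwise, hence by $N$ elements of the finite $k$-algebra $\Ga:=H^0(W,\co_W)$ generating its unit ideal (here one invokes that the Stein factorisation $W\to\spc\Ga$ is surjective, so that ``common zero on $W$'' translates to ``common zero among maximal ideals of $\Ga$''). The map $h$ therefore factors as $W\to\spc\Ga\to\PP^{N-1}$, and the image of the finite $k$-scheme $\spc\Ga$ in $\PP^{N-1}$ is a finite set of closed points. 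Applied to $h=g'\circ f$ and using that $f$ is surjective (so that $g'(V)=(g'\circ f)(V')$), this shows that $g(V)$ consists of finitely many closed points of $\mathbf G$.

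I then finish on each connected component $V_0\subset V$ separately: $g(V_0)$ is connected and finite, hence a single closed point $s\in\mathbf G$, so $g|_{V_0}$ factors as $V_0\to\spc k(s)\hookrightarrow\mathbf G$. Consequently, $\cq|_{V_0}\simeq (g|_{V_0})^*\cq_{\mathbf G}$ is the pull-back of the $d$-dimensional $k(s)$-vector space $\cq_{\mathbf G}|_s$, which as an $\co_{V_0}$-module is nothing other than $\co_{V_0}^{\oplus d}$. Hence $\cq$ is trivial on each connected component, and therefore on $V$. The delicate point, on which I expect to spend the most care, is the factorisation statement for morphisms to $\PP^{N-1}$ with trivial pull-back of $\co_\PP(1)$, especially allowing $W$ to be non-reduced so that $\Ga$ is only a finite-dimensional artinian $k$-algebra rather than a product of fields.
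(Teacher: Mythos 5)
Your proof is essentially correct and takes a genuinely different (and more self-contained) route through the middle of the argument: instead of invoking Chow's lemma to reduce to the projective case and then citing Liu's Exercise 8.1.7(a) as the paper does, you establish directly, via Stein factorization, that a morphism $h:W\to\PP^{N-1}$ from a proper $k$-scheme with $h^*\co_\PP(1)\simeq\co_W$ factors through $\spc H^0(W,\co_W)$ and hence has finite image. This argument is sound: surjectivity of $W\to\spc\Gamma$ is a standard fact about the Stein factorization, and it correctly converts ``the sections generate $\co_W$ pointwise'' into ``the sections generate the unit ideal of the Artinian $k$-algebra $\Gamma$.'' What you gain is a proof that works uniformly for proper (not just projective) $W$ without passing through Chow's lemma.

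There is, however, a small but genuine gap at the very end. From $g(V_0)=\{s\}$ you conclude that $g|_{V_0}$ factors through $\spc k(s)\hookrightarrow\mathbf G$. This is \emph{not} automatic: it requires $V_0$ to be reduced (take $V_0=\spc k[\epsilon]/(\epsilon^2)\to\af^1$, $t\mapsto\epsilon$, which has set-theoretic image a point but does not factor through the reduced point). Since the Lemma does not assume $V$ reduced, as stated this step fails. The conclusion you want is nevertheless correct and easy to salvage: either (i) observe that $g|_{V_0}$ factors through some Artinian local closed subscheme $\spc R$ supported at $s$ (because $H^0(\co_{V_0})$ is a finite, hence Artinian, $k$-algebra, so the image of $\g m_s$ is contained in the nilradical), and every vector bundle on an Artinian local ring is free; or, even more simply, (ii) $g|_{V_0}$ factors through any open $U\ni s$ over which the locally free sheaf $\cq_{\mathbf G}$ is free, so $\cq|_{V_0}=(g|_{V_0})^*(\cq_{\mathbf G}|_U)$ is free of rank $d$. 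Incidentally, the paper's own phrasing at this point is equally terse and admits the same reading (ii); so this is a place where both proofs would benefit from a sentence of justification, but only yours makes an assertion (factoring through $\spc k(s)$) that is actually false without a reducedness hypothesis.
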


\begin{proof}
We let ${\rm Grass}(r,d)$ be the 
Grassmann scheme as described in \cite[5.1.5(2), 110ff]{nitsure05} and denote by $\cu$ the universal quotient of $\co_{{\rm Grass}(r,d)}^{\op r}$. 
By construction of ${\rm Grass}(r,d)$, there exists a morphism $\ga:V\to{\rm Grass}(r,d)$ such that $\cq=\ga^*(\cu)$. Consequently, $f^*\ga^*(\det \cu)$ is trivial. 
Let $\de:{\rm Grass}(r,d)\to \PP^n$ be the determinant (Pl\"ucker) immersion so that $\det(\cu)=\de^*\co(1)$ \cite[p. 114]{nitsure05}. We then conclude that $f^*\ga^*\de^*\co(1)$ is trivial. Using Chow's lemma \ega{II}{}{5.6.1, p.106}, there is no loss of generality in supposing $V'$ to be projective. Now,  
Exercise 8.1.7(a) on p. 331 of \cite{liu02} says that the closed schematic image of $\de\ga f $ in $\PP^n$ is finite over the ground field.  Hence, the image of $\ga$ is a closed subset with finitely many points so that $\ga^*(\cu)$, which is $\cq$, is trivial. 
\end{proof}

For $E\in\g T_{\ph_K}$, let us denote the group scheme over $K$ associated to $\langle E\,;\,\g T_{\ph_K}\rangle_\ot$ via the functor $\bullet|_{x_{0K}}$ by $\mm{Gal}(E,\g T_{\ph_K},x_{0,K})$. 

\begin{thm}\label{11.04.2019--2}Suppose that $X_K$ is normal and let $\ce\in\g T_\ph^\circ$. If $E$ stands for $\ce_K$, then $\mm{Gal}'(\ce,\g T_\ph,x_0)\ot K\simeq \mm{Gal}(E,\g T_{\ph_K},x_{0K})$. In particular, $\mm{Gal}'(\ce,\g T_\ph,x_0)\ot K$ is finite. 
\end{thm}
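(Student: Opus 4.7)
The plan is to exhibit a $K$-linear tensor functor comparing the two Tannakian categories, check that it is compatible with fibre functors, and then invoke the standard Deligne--Milne criteria to conclude that the resulting morphism of $K$-group schemes is an isomorphism. The finiteness statement will then be an immediate consequence of Theorem \ref{11.04.2019--1}.

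First I would observe that the base-change functor $\cf \mapsto \cf_K := \cf \otimes_A K$ from $\bb{coh}(X)$ to $\bb{coh}(X_K)$ is exact and tensor, and sends $\g T_\ph$ into $\g T_{\ph_K}$: indeed, if $\ph^*\cf \simeq \co_Y \otimes_A F$, then $\ph_K^*(\cf_K) \simeq \co_{Y_K} \otimes_K F_K$ is trivial. Exactness of $\bullet_K$ shows that subquotients of $\bb T^{a,b}\ce$ in $\g T_\ph$ are carried to subquotients of $\bb T^{a,b}E$ in $\g T_{\ph_K}$, so we obtain a tensor functor
\[
\mathcal{F} : \langle \ce \,;\, \g T_\ph \rangle_\otimes \longrightarrow \langle E \,;\, \g T_{\ph_K}\rangle_\otimes.
\]
Since $\mathcal{F}(\cf)|_{x_{0K}} = \cf|_{x_0}\otimes_A K$ canonically, Tannakian duality (\cite[Theorem 1.2.6]{duong-hai18}) produces a morphism of $K$-group schemes
\[
\rho : \mm{Gal}(E, \g T_{\ph_K}, x_{0K}) \longrightarrow \mm{Gal}'(\ce, \g T_\ph, x_0)\otimes_A K.
\]

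Next I would verify that $\rho$ is an isomorphism via the criteria of \cite[Proposition 2.21]{deligne-milne82}. For the closed immersion property, note that every object of $\langle E\,;\,\g T_{\ph_K}\rangle_\otimes$ is a subquotient of some $\bb T^{a,b}E = \mathcal{F}(\bb T^{a,b}\ce)$, i.e. of an object in the image of $\mathcal{F}$. For faithful flatness, I need (i) that the $K$-linearisation of $\mathcal{F}$ is fully faithful, and (ii) that every subobject of $\mathcal{F}(\cf)$ in the target lifts to a subobject of $\cf$ in the source. Claim (i) follows from the compatibility $\mm{Hom}_{\co_X}(\cf,\cg)\otimes_A K = \mm{Hom}_{\co_{X_K}}(\cf_K,\cg_K)$, valid for coherent sheaves on the noetherian scheme $X$. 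For (ii), given a subsheaf $\cg' \hookrightarrow \cf_K$, I would set $\cg \subset \cf$ to be the preimage of $(j_X)_*\cg'$ under the canonical map $\cf \to (j_X)_*\cf_K$, where $j_X : X_K \to X$ is the generic-fibre inclusion. As a subsheaf of the coherent sheaf $\cf$ on the noetherian scheme $X$, the sheaf $\cg$ is coherent; it lies in $\g T_\ph$ by the abelian-category result Theorem \ref{06.04.2018--1}; it lies in $\langle \ce\,;\,\g T_\ph\rangle_\otimes$ because it is a subobject of $\cf$; and $\cg_K = \cg'$ by construction.

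The finiteness assertion then follows immediately: Theorem \ref{11.04.2019--1} tells us that $\mm{Gal}(E,\g T_{\ph_K}, x_{0K})$ is a finite $K$-group scheme, and by the isomorphism $\rho$ so is $\mm{Gal}'(\ce,\g T_\ph,x_0)\otimes_A K$. The delicate point is step (ii) in the faithful-flatness argument, i.e.\ constructing a coherent lift $\cg$ of $\cg'$ whose generic fibre recovers $\cg'$ exactly; one must check that the $\pi$-torsion of $\cf$ is absorbed correctly so that base change takes $\cg$ back to $\cg'$. Beyond this, the argument is a routine application of Tannakian formalism combined with the setup already established in Sections \ref{14.04.2018--3} and \ref{04.07.2018--1}.
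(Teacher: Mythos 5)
Your setup (building the comparison functor $\mathcal F$, obtaining a morphism $\rho:\mm{Gal}(E,\g T_{\ph_K},x_{0K})\to\mm{Gal}'(\ce,\g T_\ph,x_0)\ot K$, and checking the closed-immersion criterion) is correct and matches the paper. The genuine gap lies in your verification of faithful flatness via the second half of \cite[Proposition 2.21]{deligne-milne82}. You construct a coherent saturation $\cg\subset\cf$ with $\cg_K=\cg'$ and assert it lies in $\g T_\ph$ ``by the abelian-category result Theorem~\ref{06.04.2018--1}.'' That theorem says only that $\g T_\ph$ is a full abelian subcategory of $\bb{coh}(X)$ with exact inclusion; it does \emph{not} say that an arbitrary coherent subsheaf of an object of $\g T_\ph$ lies again in $\g T_\ph$. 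Over a d.v.r.\ this is exactly what fails in general: $\rep AG$ for a flat affine $A$-group scheme is not stable under subobjects inside the ambient module category, and correspondingly there is no reason why $\ph^*(\cg)$ should be trivial relative to $A$ even though $\ph^*(\cf)$ is and $\ph_K^*(\cg_K)$ is. So the proposed lift does not produce an object of $\langle\ce\,;\,\g T_\ph\rangle_\ot$, and the subobject-lifting condition of \cite[2.21]{deligne-milne82} is left unverified.

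This is precisely the obstruction the paper flags in Remark~\ref{11.04.2019--3}: the authors state that without the finiteness of $H:=\mm{Gal}(E,\g T_{\ph_K},x_{0K})$ they cannot run the standard criterion \cite[2.21]{deligne-milne82}, and that the finiteness (established in Theorem~\ref{11.04.2019--1}) is used to substitute Lemma~\ref{29.11.2018--1} for it. Lemma~\ref{29.11.2018--1} reduces faithful flatness of $\xi:H\to G_K$ to the single computation $K[G]_{\rm right}^H\simeq K$, which the paper carries out using full faithfulness of the inverse functor $\te:\rep AG\to\langle\ce\,;\,\g T_\ph\rangle_\ot$ together with flat base change for $H^0$. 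To repair your argument you should abandon the subobject-lifting route, invoke the finiteness of $H$ from Theorem~\ref{11.04.2019--1} at the outset, and verify only the numerical condition $\dim_K K[G]_{\rm right}^H=1$ by the ``$V\subset K[G]$ embeds into $i(V^\flat)$ for some $V^\flat\subset A[G]$ containing the constants'' reduction that the paper uses.
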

\begin{proof}

Let $\si:\langle\ce\,;\,\g T_\ph\rangle_\ot\to \langle E\,;\,\g T_{\ph_K}\rangle_\ot$ be the base change functor,  $G$ be the group scheme $\mm{Gal}'(\ce,\g T_\ph,x_0)$ and $H$ be       $\mm{Gal}(E,\g T_{\ph_K},x_{0K})$. 
 Let $\te:\rep AG\to\langle\ce\,;\,\g T_\ph\rangle_\ot$ be an inverse to $\bullet|_{x_0}:\langle\ce;\g T_\ph\rangle_\ot\to\rep AG$ and denote by $\tau:\rep AG\to\rep KH$ the composition
\[
\xymatrix{
\rep AG\ar[r]^\te &  \langle\ce\,;\,\g T_\ph\rangle_\ot  \ar[r]^-{\si} &  \langle E\,;\,\g T_{\ph_K}\rangle_\ot \ar[r]^-{\bullet|_{x_{0K}}} &\rep KH. }
\]

If $V\subset A[G]_{\rm right}$ contains the constants, the fact that $\te$ is fully faithful assures that  $H^0(X,\te(V))\simeq A$. Hence,  $H^0(X_K,\si\te(V))\simeq K$, by flat base-change. Consequently,  
\[
\tau(V)^H\simeq K.
\]

Let $i:\rep AG\to\rep K{G_K}$ be the extension of scalars. It is not difficult to see that there exists a morphism $\xi:H\to G_K$ of group schemes such that 
\[
\xymatrix{\rep A{G} \ar[d]_i \ar[r]^\tau &  \rep K{H}
\\
\rep K{G_K}\ar[ru]_{\xi^\#},& 
}
\]
is commutative (up to natural isomorphism of tensor functors).

If $V\subset K[G]_{\rm right}$, there exists $V^\flat\subset A[G]_{\rm right}$ containing the constants and an injection  $V\to i(V^\flat)$. Then, 
\[\begin{split}\xi^\#(V)^{H}&\subset \tau(V^\flat)^{H}\\&\simeq K.
\end{split}
\]
This implies that $K\simeq K[G]^H_{\rm right}$, and we conclude, with the help of Lemma \ref{29.11.2018--1} and the fact that $H$ is finite (Theorem \ref{11.04.2019--1}), that $\xi$ is faithfully flat. On the other hand, since $E|_{x_{0K}}$ is a faithful representation of $H$, the standard criterion \cite[2.21, p.139]{deligne-milne82} immediately shows that $\xi$ is a closed immersion: it follows that $\xi$ is an isomorphism. 
\end{proof}

As a simple consequence we have 
\begin{cor}\label{16.04.2019--1}Suppose that $X_K$ is normal. Then $\Pi(X,x_0)\ot K$ is pro-finite. \qed 
\end{cor}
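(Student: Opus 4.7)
The plan is to reduce the corollary directly to Theorem \ref{11.04.2019--2} by exploiting the fact that $\Pi(X,\ph,x_0)$ is, essentially by construction, an inverse limit of the Galois groups $\mm{Gal}'(\ce,\g T_\ph,x_0)$ as $\ce$ ranges over $\g T_\ph^\circ$, and that forming this inverse limit is compatible with base change to $K$.

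First I would unwind Definition \ref{14.01.2019--2}. Since $\g T_\ph^{\rm tan}=\bigcup_{\ce\in\g T_\ph^\circ}\langle\ce\,;\,\g T_\ph\rangle_\ot$, Tannakian duality identifies the coordinate Hopf algebra of $\Pi(X,\ph,x_0)$ with the filtered colimit (over $\ce\in\g T_\ph^\circ$) of the coordinate rings $A[\mm{Gal}'(\ce,\g T_\ph,x_0)]$, the transition maps being induced by inclusion of the associated tensor subcategories. Dually, $\Pi(X,\ph,x_0)$ is the inverse limit of the $\mm{Gal}'(\ce,\g T_\ph,x_0)$ in $(\bb{FGSch}/A)$.

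Next I would tensor with $K$. Since $-\ot_A K$ is a filtered colimit of finite presentations and is exact, it commutes with filtered colimits of $A$-modules; in particular
\[
K[\Pi(X,\ph,x_0)\ot_A K]\;\simeq\;\underset{\ce\in\g T_\ph^\circ}{\mm{colim}}\;K[\mm{Gal}'(\ce,\g T_\ph,x_0)\ot_A K].
\]
Equivalently, $\Pi(X,\ph,x_0)\ot_A K$ is the inverse limit (in affine $K$-group schemes) of the family $\{\mm{Gal}'(\ce,\g T_\ph,x_0)\ot_A K\}_{\ce}$.

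Finally, by Theorem \ref{11.04.2019--2}, for every $\ce\in\g T_\ph^\circ$ the group scheme $\mm{Gal}'(\ce,\g T_\ph,x_0)\ot_A K$ is isomorphic to $\mm{Gal}(\ce_K,\g T_{\ph_K},x_{0K})$, which is finite over $K$. Thus $\Pi(X,\ph,x_0)\ot_A K$ is an inverse limit of finite $K$-group schemes, i.e.\ pro-finite. There is no genuine obstacle here: the only thing to check carefully is the compatibility of the Tannakian reconstruction with filtered colimits of sub-Tannakian categories and with the flat base change $A\to K$, which are both standard facts; the substantive input is Theorem \ref{11.04.2019--2} itself, which supplies the finiteness of each building block.
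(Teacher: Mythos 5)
Your proof is correct and is precisely the argument the paper treats as immediate: $\Pi(X,\ph,x_0)\ot K$ is a filtered inverse limit of the $\mm{Gal}'(\ce,\g T_\ph,x_0)\ot K$, which Theorem \ref{11.04.2019--2} has just shown to be finite, and formation of this limit commutes with the flat base change $A\to K$. Your reading of the corollary's $\Pi(X,x_0)$ as $\Pi(X,\ph,x_0)$ is the right one --- the latter is what the introduction names for this section, and the former is only defined later in Definition \ref{19.07.2018--3}.
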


\begin{rmk}\label{11.04.2019--3}We are unable to show that Theorem \ref{11.04.2019--2} holds without the finiteness assumption on $\mm{Gal}(E,\g T_{\ph_K},x_0)$. This is because, once this assumption is removed, Lemma \ref{29.11.2018--1} cannot be applied so that, in its place, we need  the standard criterion guaranteeing faithful flatness \cite[2.21, p.139]{deligne-milne82}.
\end{rmk}

\section{Prudence in the category $\g T_*$}\label{14.03.2019--1}
We shall assume that   $A$ is complete.
In \cite[Section 6]{hai-dos_santos18} we   introduced the notion of {\it prudence} of an affine flat group scheme over $A$. As shown in op. cit. this concept turns out to be equivalent to  the Mittag-Leffler property (or freeness)  of the $A$-module of functions of the group. In this section we show that the $A$-module of functions of a Galois group (see Definition \ref{14.01.2019--2}) is free by using prudence. 

Here are the hypothesis on the ambient space which are fixed in this section. Let  $X$ be a proper and flat $A$-scheme having reduced fibres. We give ourselves an $A$-point $x_0$ and an arrow $\ph:Y\to X$ in $\g S^+(X,x_0)$ (see Definition \ref{26.06.2018--1}). If $B=H^0(\co_Y)$, we let $y_0$ be the $B$-point of $Y$ above $x_0$ assured by the definition of $\g S^+$.

\begin{thm}\label{25.06.2018--4}Let  $\ce\in\g T_\ph^\circ$. Then ${\rm Gal}'(\ce,\g T_\ph,x_0)$, as introduced in Definition \ref{14.01.2019--2}, is prudent. In particular, its $A$-module of functions is free. 
\end{thm}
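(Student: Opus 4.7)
The plan is to verify the criterion for prudence established in \cite[Section 6]{hai-dos_santos18} by exploiting the fact that, under the Tannakian equivalence, representations of $G := \mathrm{Gal}'(\ce,\g T_\ph,x_0)$ are realised as honest coherent sheaves on the proper $A$-scheme $X$, so that the Mittag--Leffler/freeness condition on $A[G]$ can be translated into an algebraization problem for coherent sheaves. Since $A$ is complete and $X,Y$ are proper, this problem is governed by formal GAGA (Grothendieck's existence theorem).

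First I would reduce to a statement about liftings. Prudence is a property of the family of finitely generated $G$-subrepresentations of the right-regular representation $A[G]_{\mathrm{right}}$; via the Tannakian equivalence $\rep{A}{G}\simeq \langle\ce\,;\,\g T_\ph\rangle_\ot$ these correspond to objects of $\g T_\ph$ sitting inside some $\bb T^{a,b}\ce$. Concretely, it suffices to show that a compatible system $\{\cv_n\}_{n\ge 0}$ with $\cv_n\in\g T_{\ph_n}$, $\cv_n\otimes_A A_{n-1}\simeq \cv_{n-1}$, and each $\cv_n$ a subobject of $\bb T^{a,b}\ce_n$, can be algebraized to an object $\cv\in\langle\ce\,;\,\g T_\ph\rangle_\ot$.

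Next I would perform the algebraization in three stages. \emph{(i)} Formal GAGA on $X$, which is proper over the complete ring $A$, produces a coherent sheaf $\cv$ on $X$ with $\cv\otimes_A A_n\simeq \cv_n$, and a monomorphism $\cv\hookrightarrow\bb T^{a,b}\ce$ lifting the compatible system of subobjects. \emph{(ii)} To check that $\cv$ belongs to $\g T_\ph$, I would pass to $Y$ and employ the adapted isomorphism machinery of Definition~\ref{06.09.2018--1} and Lemma~\ref{23.03.2018--1}: choose trivializations $\al_n\colon \co_{Y_n}\otimes_{A_n}E_n\xrightarrow{\sim}\ph_n^*\cv_n$ adapted to the $B_n$-point $y_{0,n}$, where $E_n=\cv_n|_{x_0}$. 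Proposition~\ref{05.04.2018--3}(2), applied to the transition maps $\cv_n\to\cv_{n-1}$, guarantees that the $\al_n$ descend compatibly, so they assemble into a formal isomorphism on $\hat Y$. Since $Y$ is proper over $A$ (condition $\g S1$) and $H^0$-flat (condition $\g S2$), formal GAGA again algebraizes this to an isomorphism $\al\colon \co_Y\otimes_A E\xrightarrow{\sim}\ph^*\cv$ with $E=\cv|_{x_0}$, witnessing $\cv\in\g T_\ph$. \emph{(iii)} The subobject structure inherited from step (i) places $\cv$ in $\langle\ce\,;\,\g T_\ph\rangle_\ot$.

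The main obstacle is the compatibility in step \emph{(ii)}: adapted trivializations are only canonical up to an automorphism of $E_n$ fixing its image in $B_n\otimes_{A_n}E_n$, so one must use Lemma~\ref{26.03.2018--1} to recognize conservative sections at each level and inductively adjust $\al_n$ so that the diagrams commute mod $\pi^n$; this is precisely the technical content that condition $\g S4$ (flatness of $Y\to\spc H^0(\co_Y)$) is designed to make automatic, because it ensures that the $B$-module $E$ controls $\ph^*\cv$ faithfully. Once prudence is thus established, the final assertion---that $A[G]$ is free---is immediate from the equivalence of prudence with Mittag--Leffler/freeness proved in \cite[Section 6]{hai-dos_santos18}.
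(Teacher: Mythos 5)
Your proposal is correct and follows the paper's high-level strategy---translate prudence into an algebraization problem and invoke Grothendieck's existence theorem over the complete $A$---but the implementation of the crucial algebraization step is genuinely different from the paper's. The paper (in Lemma \ref{14.04.2018--1}) works with the section spaces $H^0(\ph^*\cl_n)$: it uses faithful flatness of $Y$ over $B$ (this is where $\g S4$ is essential) to show that the system $\{H^0(\ph^*\cl_n)\}_n$ has surjective transition maps, then applies the theorem of formal functions to identify $H^0(\ph^*\cl)$ with the limit, and finally Grothendieck algebraization to conclude that $\co_Y\otimes_B H^0(\ph^*\cl)\to\ph^*\cl$ is an isomorphism. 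You instead build adapted trivializations $\al_n$ at each finite level, observe that they form a compatible system, and algebraize the formal isomorphism directly. Both are legitimate routes, but yours is somewhat cleaner in that it circumvents the explicit formal-functions computation.

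Two points are worth flagging. First, the ``main obstacle'' you describe---having to inductively adjust the $\al_n$---is not actually there: adapted isomorphisms are \emph{unique}. Indeed, if $\al,\al'\colon\co_Y\ot_A E\to\ph^*\cl$ are both adapted, then $\ga:=\al^{-1}\al'$ satisfies $H^0(\ga)=\mathrm{id}$ by the adapted condition, and since $\co_Y\ot_A E$ is generated by global sections this forces $\ga=\mathrm{id}$. Combined with Proposition \ref{05.04.2018--3}(2) applied to the transition maps $q_n\colon\cl_n\to\cl_{n-1}$ (which shows that $u_0=q_n|_{x_0}$ is the canonical reduction, hence that $\al_n\ot_{A_n}A_{n-1}$ is again adapted and so equals $\al_{n-1}$), compatibility is automatic, with no adjustment needed. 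Second, you invoke condition $\g S4$ in a way that is rather vague (``ensures that the $B$-module $E$ controls $\ph^*\cv$ faithfully''), and as far as I can tell your argument does not actually use $\g S4$ anywhere---in contrast to the paper's Lemma \ref{14.04.2018--1}, which needs it explicitly. If that observation is right, your route is slightly more economical in its hypotheses; either way, you should pin down precisely where (or whether) $\g S4$ enters rather than gesture at it.
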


Before starting the proof, 
let us obtain the

\begin{cor}\label{25.06.2018--2}Suppose that $X_K$ is normal. Then ${\rm Gal}'(\ce,\g T_\ph,x_0)$ is finite.   
\end{cor}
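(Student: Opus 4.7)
The plan is to combine the two principal results already at our disposal: Theorem \ref{25.06.2018--4} (prudence of the Galois group yields freeness of its Hopf algebra as an $A$-module) and Theorem \ref{11.04.2019--2} (under normality of $X_K$, the generic fibre of the Galois group is finite). Write $G={\rm Gal}'(\ce,\g T_\ph,x_0)$ for brevity.

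First I would invoke Theorem \ref{25.06.2018--4} to conclude that $A[G]$ is a free $A$-module. Write $A[G]\simeq A^{(I)}$ for some (a priori arbitrary) index set $I$. Next, since $X_K$ is normal and $\ce_K\in\g T_{\ph_K}$, Theorem \ref{11.04.2019--2} applies and gives an isomorphism $G\otimes_AK\simeq \mm{Gal}(\ce_K,\g T_{\ph_K},x_{0K})$, with the latter $K$-group scheme finite. Consequently, $K[G]=K\otimes_A A[G]\simeq K^{(I)}$ is a finite-dimensional $K$-vector space, which forces $I$ to be a finite set. Therefore $A[G]$ is a finitely generated free $A$-module, and $G$ is finite (and flat) over $A$.

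I do not expect any genuine obstacle here: the content is the combination of prudence (providing the free structure over $A$) with the normality-based finiteness of the generic fibre (bounding the rank). The only point meriting a line of justification is the passage from ``free of possibly infinite rank plus finite generic rank'' to ``free of finite rank,'' which follows at once from the compatibility of tensor products with direct sums. Everything else is already packaged into the two cited theorems.
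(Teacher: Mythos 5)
Your argument is correct and relies on exactly the two inputs the paper uses: Theorem \ref{25.06.2018--4} (prudence, hence freeness of $A[G]$) and Theorem \ref{11.04.2019--2} (finiteness of $G\otimes_A K$ under normality of $X_K$). The only deviation from the paper's own proof is in the bookkeeping: the paper first uses the finiteness of the generic fibre to see that $A[G]$ has finite rank, then invokes Kaplansky's structure theorem over the complete DVR $A$ to write $A[G]\simeq A^a\oplus K^b$, and finally appeals to Theorem \ref{25.06.2018--4} to force $b=0$. You instead take the freeness asserted in Theorem \ref{25.06.2018--4} at face value, say $A[G]\simeq A^{(I)}$, and then use $K[G]\simeq K^{(I)}$ finite-dimensional to conclude $I$ is finite. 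This dispenses with Kaplansky's theorem and is marginally shorter; the content and the cited results are the same.
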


\begin{proof}We abbreviate $G:={\rm Gal}'(\ce,\g T_\ph,x_0)$. By  Theorem \ref{11.04.2019--2},   $G\ot K$ is finite over $K$. As $A$ is complete, the $A$-module $A[G]$ is isomorphic to a sum $A^{a}\op K^b$ by \cite[Theorem 12]{kaplansky52}. But Theorem \ref{25.06.2018--4} implies that $b=0$, so $A[G]\simeq A^a$.
\end{proof}

\begin{proof}[Proof of Theorem \ref{25.06.2018--4}]    Recall that by assumption   $Y$ is $H^0$-flat over $A$ and 
flat over $B=H^0(\co_Y)$. We wish to show that ${\rm Gal}'(\ce)$ is prudent \cite[Section 6]{hai-dos_santos18}, and for that we introduce the following setting. 


Let $\cv\in\langle\ce\,;\,\g T_\ph\rangle_\ot$ be a vector bundle and let \[v_n:\cl_n\aro\cv_n\] be a compatible family of 
monomorphisms of $\co_X$-modules 
(as customary, $\cv_n=A_n\ot_A\cv$, $B_n=A_n\ot_AB$, etc.) such that for each $n\in\NN$,  the $\co_X$-module $\cl_n$ belongs to $\g T_{\ph}$. By Grothendieck's existence theorem \cite[Theorem 8.4.2]{illusie06}, there exists an arrow of coherent $\co_X$-modules $v:\cl\to\cv$ such that $v_n$ is none other than $v\ot_AA_n$.

Write $\cm:=\ph^*(\cl)$; our assumption on $\cl_n$ tells us that $\cm_n=\ph^*(\cl_n)$ is trivial relatively to $A$. Then, since we assume that $Y$ is $H^0$-flat over $A$, Lemma \ref{29.03.2018--1} tells us that   the canonical arrow 
\[\xymatrix{
\co_Y\otu BH^0(\cm_n)\ar[rr]^-{\te_n}&&    \cm_n}
\]
is an \emph{isomorphism}.
We now require the 

\begin{lem}\label{14.04.2018--1}Under the above notations and assumption, the canonical morphism of $\co_Y$-modules 
\[\tau:\co_Y\otu BH^0(\cm)\aro\cm\]
is an isomorphism. 
In particular, $\cm$ is trivial relatively to $B$. 
\end{lem}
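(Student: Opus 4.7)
The plan is to identify both sides of $\tau$ explicitly at every finite level and then lift the resulting isomorphism to $Y$ by means of Grothendieck's existence theorem. Set $E:=\cl|_{x_0}$, which is a finitely generated $A$-module, and observe that since $\cl_n=\cl/\pi^{n+1}\cl$ we have canonically $E_n:=\cl_n|_{x_0}=E/\pi^{n+1}E$. Because each $\cl_n$ belongs to $\g T_\ph$, the sheaf $\cm_n=\ph^*(\cl_n)$ is trivial relative to $A$; Lemma \ref{20.02.2019--4} permits us to take the descended module to be $E_n$. Applying Lemma \ref{29.03.2018--1} (to $F=E_n$, an $A$-module -- the fact that it is killed by $\pi^{n+1}$ is immaterial) we obtain natural identifications $H^0(\cm_n)\cong B\otimes_A E_n$ and the canonical isomorphism $\co_Y\otimes_B H^0(\cm_n)\xrightarrow{\sim}\cm_n$. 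These identifications are functorial in $n$, so the inverse system $\{H^0(\cm_n)\}$ agrees with $\{B\otimes_AE_n\}$ with the obvious transition maps (which are surjective).

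Next, compute $F:=H^0(\cm)$. Since $A$ is complete and $Y$ is proper over $\spc A$, the theorem on formal functions yields
\[
F\;\cong\;\varprojlim_n H^0(\cm_n)\;\cong\;\varprojlim_n\bigl(B\otimes_AE_n\bigr),
\]
and the last limit is the $\pi$-adic completion of $B\otimes_AE$. Because $B\otimes_AE$ is a finitely generated module over the complete discrete valuation ring $A$, it is already $\pi$-adically complete, so $F\cong B\otimes_AE$ and, moreover, the reduction morphisms $F/\pi^{n+1}F\to H^0(\cm_n)$ are isomorphisms for every $n$.

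Reducing $\tau$ modulo $\pi^{n+1}$ we obtain a morphism
\[
\tau_n:\co_{Y_n}\otu{B_n}\bigl(F/\pi^{n+1}F\bigr)\aro\cm_n
\]
which, under the identifications of the previous paragraph, coincides with the canonical isomorphism $\co_Y\otimes_B H^0(\cm_n)\xrightarrow{\sim}\cm_n$ supplied by Lemma \ref{29.03.2018--1}. Hence each $\tau_n$ is an isomorphism. Both the source and target of $\tau$ are coherent $\co_Y$-modules, and $Y$ is proper over the complete ring $A$, so Grothendieck's existence theorem (\cite[Theorem 8.4.2]{illusie06}) implies that a morphism of coherent sheaves on $Y$ inducing an isomorphism on every $\pi^{n+1}$-reduction is an isomorphism; therefore $\tau$ is an isomorphism. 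This exhibits $\cm$ as $\co_Y\otimes_BF$ with $F$ a finitely generated $B$-module, hence $\cm$ is trivial relative to $B$.

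The main obstacle is the bookkeeping in the first paragraph: one must ensure that the canonical identifications $H^0(\cm_n)\cong B\otimes_AE_n$ provided by Lemmas \ref{20.02.2019--4} and \ref{29.03.2018--1} are compatible with the transition morphisms $\cm_{n+1}\twoheadrightarrow\cm_n$, so that the inverse-limit computation in the second paragraph yields $F\cong B\otimes_AE$ with compatible reductions. Once this compatibility is established, the conclusion follows from formal functions and algebraization in a routine way.
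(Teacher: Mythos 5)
Your strategy—compute $H^0(\cm_n)$ and $H^0(\cm)$ explicitly, then appeal to formal functions and algebraization—is in the same spirit as the paper's proof, but it parts ways at a key point and, as you yourself flag, leaves a genuine gap. Lemma \ref{20.02.2019--4} only furnishes \emph{non-canonical} isomorphisms $\co_Y\otimes_A E_n\simeq\cm_n$, so the resulting identifications $H^0(\cm_n)\simeq B\otimes_A E_n$ that you produce (by pushing through Lemma \ref{29.03.2018--1}) are not automatically compatible with the transition maps $\cm_{n+1}\twoheadrightarrow\cm_n$. Without that compatibility, the computation $\varprojlim_n H^0(\cm_n)\simeq B\otimes_A E$ and, more importantly, the identification of the reduction $F/\pi^{n+1}F\to H^0(\cm_n)$ as an isomorphism, do not follow—and this is exactly the input that makes each $\tau_n$ an isomorphism. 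You acknowledge the concern but do not resolve it, and as written the proof is incomplete at this step.

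The paper sidesteps the issue entirely by never identifying $H^0(\cm_n)$ with $B\otimes_A E_n$. It keeps the inverse system $\{H^0(\cm_n)\}$ abstract, shows the sequence $H^0(\cm_{n+1})\xrightarrow{\pi^{n+1}}H^0(\cm_{n+1})\to H^0(\cm_n)\to 0$ is exact by invoking \emph{faithful flatness of $Y$ over $B$} (this is where hypothesis $\g S4$ enters), and only then feeds the resulting $\g M=\varprojlim H^0(\cm_n)$ into formal functions and Grothendieck's algebraization theorem. Your gap \emph{is} fillable: the canonical evaluation map $\ev_{y_0}:H^0(\cm_n)\to\cm_n|_{y_0}\simeq B\otimes_A E_n$ (cf.\ Definition \ref{06.09.2018--1} and the surrounding discussion) is natural in $\cl_n$, hence compatible with the transitions, and it is an isomorphism precisely because $\cm_n$ is trivial relative to $A$ and $Y$ is $H^0$-flat over $A$. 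If you replace the non-canonical identifications from Lemma \ref{20.02.2019--4} by these evaluation maps, your computation goes through; interestingly, that variant seems to make no use of $\g S4$ for this lemma, whereas the paper's argument relies on it.
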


\begin{proof}
Consider the commutative diagram 
\[
\xymatrix{
\cm_{n+1}\ar[rr]^{\pi^{n+1}} && \cm_{n+1}\ar[rr]^{q_n}&& \cm_n\ar[r]&0
\\
\ar[u]^{\te_{n+1}}\co_Y\otu BH^0(\cm_{n+1})\ar[rr]_{\pi^{n+1}}  && \co_Y\otu BH^0(\cm_{n+1}) \ar[rr]_{\id\ot H^0(q_n)}\ar[u]^{\te_{n+1}}&& \co_Y\otu BH^0(\cm_{n})\ar[u]^{\te_{n}}\ar[r]&0,
}
\]
where $q_n$ is the canonical arrow. 
By assumption, the vertical arrows are isomorphisms while the upper row is tautologically an exact sequence of $\co_Y$-modules. We conclude that the bottom row is exact and faithful flatness of $Y$ over $B$ (faithfulness is guaranteed by \ega{III}{1}{4.3.1, p.130}) shows that the sequence 
\[
\xymatrix{
H^0(\cm_{n+1})\ar[r]^{\pi^{n+1}} & H^0(\cm_{n+1})\ar[rr]^{H^0(q_n)}& & H^0(\cm_n)\ar[r]&0
}
\] is equally exact. Hence, if 
\[\g M:=\lip\left( \xymatrix{\cdots\ar[r]&H^0(\cm_{n+1})\ar[rr]^{H^0(q_n)}& & H^0(\cm_{n})\ar[r]&\cdots} \right),\]
it follows that $\g M$ is a finitely generated $B$-module and that the projection 
\[
\g M\aro H^0(\cm_n)
\] 
induces an isomorphism 
\[
B_{n}\ot_B\g M\arou\sim H^0(\cm_n). 
\] 
(This follows from the the fact that $B$ is $\pi$-adically complete and \ega{0}{\rm I}{7.2.9}.) More importantly, letting    $u_n:\cm\to\cm_n$ stand for  the natural epimorphism, a direct application of the theorem of formal functions (see \cite[8.2.4, p. 188]{illusie06} or \cite[III.11.1, p.277]{hartshorne77}) guarantees that the obvious arrow
\[  
\xymatrix{ H^0(\cm) \ar[rrr]^-{\lip_nH^0(u_n)} &&& \g M }
\]
is a bijection. Consequently, for any given $n\in\NN$, the arrow $H^0(u_n): H^0(\cm)\to H^0(\cm_n)$ 
induces an isomorphism 
\[ 
U_n: B_{n}\otu BH^0(\cm)\arou\sim H^0(\cm_n).
\]
To show that  $\tau:\co_Y\ot_BH^0(\cm)\to\cm$ is an isomorphism, by Grothendieck's algebraization theorem \cite[Theorem 8.2.9, p.192]{illusie06}, we only need to show that for each $n$, 
\[
B_n\otu B (\co_Y\ot_BH^0(\cm))\xymatrix{\ar[rr]^-{\id\ot \tau}&&} B_{n}\otu B \cm
\]
is an isomorphism. Now, we have the commutative diagram 
\[\xymatrix{
B_{n}\otu B  (\co_Y\ot_BH^0(\cm))\ar[rr]^-{\id\ot\tau}&&B_{n}\otu B \cm \ar[r]^{\ov u_n}_\sim & \cm_n \\ 
\ar[u]^-{\sim}_{\text{canonic}} \co_Y\otu B(B_{n}\ot_B H^0(\cm))\ar[rrr]_-{\id \ot U_n} &&& \co_Y\otu BH^0(\cm_n)\ar[u]^-{\te_n}_-{\sim},
}
\]where $\ov u_n$ is associated to $u_n$, and the conclusion follows.  
\end{proof}

By virtue of the above Lemma, $\cm$ is trivial relatively to $B$; using Lemma \ref{21.02.2019--1} we conclude that $\cm$ is trivial relatively to $A$.
This shows that 
$\cl\in\g T_\ph$. As it is not difficult to show that $v$ is   a monomorphism by looking at the arrow $\cl|_{x_0}\to\cv|_{x_0}$, we see that $\cl$ does indeed belong to $\langle\ce\,,\,\g T_\ph\rangle_\ot$.

We have then showed that the group scheme ${\rm Gal}'(\ce,\g T_\ph)$ is prudent, and hence  that its ring of functions is free as an $A$-module \cite[Section 6]{hai-dos_santos18}. 

\end{proof}



\section{Digression on the reduced fibre theorem}\label{RFT}
For future applications, we shall state a version of the reduced fibre theorem (see \cite[Theorem $2.1'$]{bosch-lutkebohmert-raynaud95} and \cite{temkin10}) suitable for us. 
\begin{thm}[The reduced fibre theorem]\label{reduced_fibre_theorem} Let $A$ be  Henselian and $Y$ be a flat $A$-scheme of finite type whose fibre $Y_K$ is geometrically reduced. Then, there exists a finite extension of discrete valuation rings $B\supset A$ and a commutative diagram 
\[
\xymatrix{&Z\ar[d]^\ep
\\
Y\ar[d]&\ar[l]\ar@{}|{\square}[dl] Y_B\ar[d]
\\
\spc A &\ar[l] \spc B
}
\]
where: 
\begin{enumerate}[{\bf RFT}1)]\item The $B$-scheme $Z$ is flat   and has geometrically reduced fibres. 
\item The morphism $\ep$ is finite and surjective. 
\item If $L={\rm Frac}(B)$, then 
\[
\ep\otu BL:Z\otu BL\aro Y_B\otu BL
\] 
is an isomorphism. 
\end{enumerate}In particular, the composition $Z\to Y_B\to Y$ is finite and surjective. 
\end{thm}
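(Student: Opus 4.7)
The plan is to deduce the statement directly from Theorem $2.1'$ of \cite{bosch-lutkebohmert-raynaud95} (or equivalently from the version in \cite{temkin10}), which already produces, under exactly our hypotheses on $A$ and $Y$, a finite extension of Henselian valuation rings $B\supset A$ and a finite morphism $\ep:Z\to Y_B$ meeting $\bf{RFT}1$--$\bf{RFT}3$. The work therefore reduces to (a) checking that in the discrete case $B$ can be chosen to be a discrete valuation ring, and (b) verifying that the ``in particular'' clause follows formally.

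For point (a), I would observe that the extension $B$ constructed in the reduced fibre theorem is built by adjoining finitely many elements of an algebraic closure of $K$ so as to kill certain torsion phenomena above the special fibre; the ring $B$ is the Henselization, inside $\overline{K}$, of the normalization of $A$ in a finite separable subextension $L/K$. Since $A$ is a discrete valuation ring and $L/K$ is finite separable, this normalization is a Dedekind domain with finitely many maximal ideals; the Henselian localization at any one of them is again a discrete valuation ring, finite over $A$ by Nakayama together with the finiteness of the normalization (recall $A$ is Japanese only in later sections, but for the reduced fibre theorem the relevant finiteness is built into the theorem itself).

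For point (b), the finiteness and surjectivity of $\ep$ together with the finiteness and surjectivity of the base change $Y_B\to Y$ (which is finite because $B$ is finite over $A$, and surjective because $B$ is faithfully flat over $A$) give that the composite $Z\to Y_B\to Y$ is finite and surjective. Condition $\bf{RFT}3$ is the genuinely substantive output of the reduced fibre theorem and is stated verbatim in the cited sources: the modification $\ep$ is an isomorphism away from the special fibre, which over a discrete valuation ring amounts to an isomorphism after tensoring with $L=\mathrm{Frac}(B)$.

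The main obstacle I foresee is not mathematical but bibliographic: the precise wording in \cite{bosch-lutkebohmert-raynaud95} is phrased in terms of admissible formal schemes and rig-\'etale morphisms, so one must take a little care translating to the algebraic setting of a flat finite-type $A$-scheme. I would address this by passing through Temkin's purely algebraic reformulation in \cite{temkin10}, which states essentially this theorem with no formal-geometric baggage, and then appeal to it as a black box.
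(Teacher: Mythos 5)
Your proposal correctly identifies the route the paper takes (go through Temkin's algebraic reformulation of the reduced fibre theorem rather than wrestling with the formal/rigid language of Bosch--L\"utkebohmert--Raynaud), and your treatment of the ``in particular'' clause is fine. However, there are two issues.

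First, the substantive gap: Temkin's Theorem 3.5.5 does \emph{not} directly hand you a finite extension of discrete valuation rings $B\supset A$ together with $\ep:Z\to Y_B$. What it gives is an integral scheme $S$ with a proper, dominant, generically finite and separable morphism $S\to\spc A$ (an \emph{alteration}), plus a finite $S$-morphism $\tilde Y\to Y_S$ with the analogous properties. One must still manufacture the d.v.r. $B$ out of $S$: the paper does this by taking $B$ to be the integral closure of $A$ in the function field $L$ of $S$ --- a d.v.r. finite over $A$ because $A$ is Henselian and $L/K$ is finite separable --- and then uses properness of $S$ over $A$ to extend $\spc L\to S$ to a morphism $\spc B\to S$, after which one base-changes the whole diagram along $\spc B\to S$. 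Your proposal treats the passage from the alteration to the valuation ring as automatic, but that is precisely where the small argument is.

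Second, your description of $B$ as ``the Henselization\ldots of the normalization of $A$ in a finite separable subextension $L/K$,'' with finiteness over $A$ justified ``by Nakayama,'' is not right as stated: Henselization of a non-Henselian local ring is essentially never a finite extension, and Nakayama does not rescue this. What saves the day in the present situation is simply that $A$ is already Henselian. A finite algebra over a Henselian local ring is a product of Henselian local rings, so the integral closure of $A$ in $L$ is automatically a local (Henselian) normal Noetherian domain of dimension $1$, i.e.\ a d.v.r., with no Henselization needed, and finiteness over $A$ is the classical fact that normalization in a finite \emph{separable} extension is finite over a Noetherian normal domain (\cite[31.B]{matsumuraCA}). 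Your conclusion is correct, but the argument offered for it would not survive in a setting where $A$ is not Henselian, and so misidentifies where the hypothesis is used.
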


\begin{proof}We show how to arrive at the conclusion starting from \cite[Theorem 3.5.5]{temkin10}. 
By this theorem (see also the notation on p. 619 of op. cit.), we obtain the existence of an integral scheme $S$ and a separable alteration (proper, dominant and inducing a finite and separable extension of function fields) 
\[
S\aro \spc A,
\]
and a commutative diagram with cartesian square 
\[
\xymatrix{
&\wt Y\ar[d]^\de
\\
Y\ar[d]&Y_S \ar[d]\ar[l]\ar@{}[ld]|\square
\\
\spc A& \ar[l]S,
}
\]
such that: 
\begin{itemize}
\item The $S$-scheme $\wt Y$ is flat and has geometrically reduced fibres. 
\item The morphism $\de$ is finite.
\item  The base-change 
\[
\xymatrix{\wt{Y}\tiu SS_K\ar[rr]^-{\de\tiu S S_K}&& Y_S\tiu SS_K} 
\]
is an isomorphism.
\end{itemize}
As $Y_S\to S$ is an open morphism  \ega{IV}{2}{2.4.6} and $S_K$ is dense in $S$, we can say that $Y_S\ti_SS_K$ is dense in $Y_S$; consequently, 
\begin{itemize}
\item The morphism $\de$ is \emph{surjective}.
\end{itemize}
If $L$ is the function field of $S$ and $B$ is the integral closure of $A$ in $L$ (necessarily a d.v.r. and a finite $A$-module \cite[31.B, p.232]{matsumuraCA}), properness of $S$ gives the existence of a morphism of $A$-schemes $\spc B\to S$ extending $\spc L\to S$.  
We then arrive at a commutative diagram with cartesian squares 
\[\xymatrix{
&\wt{Y}\ar[d]^\de & Z\ar[d]^\ep\ar[l]\ar@{}|\square[dl]
\\
Y\ar[d] & Y_S\ar@{}|\square[dr] \ar[d]\ar[l]\ar@{}[ld]|\square & Y_B\ar[d]\ar[l]
\\
\spc A& \ar[l]S & \spc B\ar[l]
}
\]
where  
\begin{itemize}\item The $B$-scheme $Z$ is flat and has geometrically reduced fibres (for the last condition, see  \ega{IV}{2}{4.6.1, p.68}). 
\item The morphism $\ep$ is finite and surjective (for surjectivity, see \ega{I}{}{3.5.2, p.115}). 
\item The base-change morphism  
\[
\ep\otu BL:Z\otu BL\aro Y_B\otu BL
\] 
is an isomorphism. 

\end{itemize}
The proof of the last claim is quite simple and we omit it. 
\end{proof}

The ensuing result will prove useful in order to apply Theorem \ref{reduced_fibre_theorem}. In it, we employ the notion of Japanese discrete valuation ring \ega{0}{{\rm IV}}{23.1.1, p.213}. (The reader should recall that all complete d.v.r.'s are Japanese, as are those whose field of fractions has characteristic zero. See \ega{0}{{\rm IV}}{23.1.5} and \ega{0}{{\rm IV}}{23.1.2}.)

\begin{lem}\label{25.10.2018--1}
Let $A$ be a Japanese   and $S$ a flat $A$-scheme of finite type. There exists a finite purely inseparable extension $\tilde K$ of $K$ such that the following property holds. If $\tilde A$ is the integral closure of $A$ in $\tilde K$, then   the  $\tilde A$-scheme $\tilde S:=(S\ot_A\tilde A)_\red$  has a geometrically reduced generic fibre and is flat . 
\end{lem}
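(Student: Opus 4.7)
The plan is to separate the two demands on $\tilde S$---geometric reducedness of the generic fibre and flatness over $\tilde A$---and to treat them in succession. I would first find a finite purely inseparable $\tilde K/K$ that settles the fibre condition over $(0)$, next check that the Japanese hypothesis forces $\tilde A$ to be a DVR finite over $A$, and finally propagate $\tilde A$-flatness from $S\otimes_A\tilde A$ to its reduction by a torsion-free stalk computation.

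For the first step, if $\mathrm{char}\,K=0$ there is nothing to do: any reduced scheme of finite type over a characteristic zero field is automatically geometrically reduced, so $\tilde K=K$ works after implicitly replacing $S_K$ by $(S_K)_\red$. If $\mathrm{char}\,K=p>0$, I would invoke the standard fact---essentially a consequence of the perfect closure of $K$ being a filtered union of finite purely inseparable extensions together with noetherianity of $S_K$---that there exists $n\ge 0$ with $(S_K\otimes_KK^{1/p^n})_\red$ geometrically reduced over $K^{1/p^n}$; taking $\tilde K=K^{1/p^n}$ does the job. For the second step, since $A$ is Japanese the integral closure $\tilde A$ of $A$ in $\tilde K$ is a finite $A$-module \ega{0}{{\rm IV}}{23.1.1}, hence noetherian. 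Purely inseparable extensions are radicial, so there is a unique prime of $\tilde A$ above the maximal ideal of $A$, whence $\tilde A$ is a local one-dimensional noetherian domain; being integrally closed in $\tilde K$ by construction, it is a DVR, and is automatically faithfully flat over $A$ (torsion-free finite over the PID $A$).

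It remains to check that $\tilde S:=(S\otimes_A\tilde A)_\red$ is $\tilde A$-flat and that its generic fibre is the one expected. Set $W:=S\otimes_A\tilde A$. Since $S$ is $A$-flat, $W$ is $\tilde A$-flat by base change. I claim the passage to $W_\red$ preserves this. Writing $\tilde\pi$ for a uniformizer of $\tilde A$, it suffices to check $\tilde\pi$-torsion-freeness on stalks, so let $R$ be a local ring of $W$ and let $\bar r\in R_\red$ satisfy $\tilde\pi\bar r=0$. Lifting to $r\in R$ we obtain $\tilde\pi r\in\mathrm{nil}(R)$, so $\tilde\pi^nr^n=0$ for some $n\ge 1$; as $\tilde A$-flatness of $R$ makes $\tilde\pi$ a nonzerodivisor, $r^n=0$ and thus $\bar r=0$. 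Finally, since reduction commutes with localisation,
\[
\tilde S\otimes_{\tilde A}\tilde K=(W\otimes_{\tilde A}\tilde K)_\red=(S_K\otimes_K\tilde K)_\red,
\]
which is geometrically reduced over $\tilde K$ by the choice of $\tilde K$. The only non-formal ingredient is the existence result recalled in the first step; the rest is routine manipulation of flatness, nilradicals, and flat base change.
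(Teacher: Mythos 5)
Your proof is correct and takes essentially the same route as the paper's: choose a finite purely inseparable $\tilde K/K$ killing geometric non-reducedness of the generic fibre (the paper cites \ega{IV}{2}{4.6.6, p.69}, you recall the perfect-closure argument behind it), and then observe that if $\tilde\pi\tilde f$ is nilpotent in a flat $\tilde A$-algebra then $\tilde f$ itself is nilpotent, so passing to the reduction preserves $\tilde A$-flatness (the paper phrases this on affine opens, you on stalks -- same computation). Your explicit check that $\tilde A$ is a DVR is a detail the paper leaves implicit; it is correct and harmless.
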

\begin{proof}

Let $\tilde K/K$ be a finite and purely inseparable extension such that $(S\ot_A\tilde K)_\red$ is   geometrically reduced over $\tilde K$ \ega{IV}{2}{4.6.6, p.69}. Now, in the notation of the statement, $\tilde S[1/\pi]=(S\ot_A\tilde K)_\red$, and we are done. To prove that $\tilde S$ is flat over $\tilde A$, let $\tilde \pi$ be a uniformizer of $\tilde A$.
 Let $\tilde f$ be a function on some unspecified affine open subset of $S\ot_A\tilde A$ such that   $\tilde \pi \tilde f$ is nilpotent,  say $\tilde \pi^m \tilde f^{m}=0$. Then,   the fact that $S\ot_A\tilde A$ is $\tilde A$-flat says that $\tilde f$ is nilpotent, and this shows that $(S\ot_A\tilde A)_\red$ is $\tilde A$-flat.
\end{proof}

\section{The fundamental group scheme}\label{FGS}

 Let $X$ be an irreducible, proper and flat $A$-scheme with geometrically reduced fibres, and $x_0$ an $A$-point of $X$. We now wish to assemble the categories $\g T_\ph$ (see Definition \ref{05.09.2018--3} and Theorem \ref{06.04.2018--1}) for varying $\ph$ in a single one, and for that we need the following:

\begin{thm}\label{05.11.2018--1}We suppose that $A$ is Henselian and Japanese.   
Let 
$\ph:Y\to X$ be a proper and surjective morphism. Then, there exists a commutative diagram of schemes 
\[
\xymatrix{Z\ar[r] \ar[dr]_\ps&Y\ar[d]^\ph\\&X}
\] 
enjoying the following properties. 
\begin{enumerate}[(1)]\item The morphism $\ps$ is surjective and proper. 
\item The ring $C:=H^0(\co_Z)$ is a discrete valuation ring and is a finite extension of $A$.
\item The canonical morphism $Z\to\spc C$ is flat and has  geometrically reduced fibres. 
\item The $C$-scheme  $Z$  has a $C$-point above $x_0$. 
\end{enumerate}
In particular, $\ps:Z\to X$ belongs to the category  $\g S^+(X,x_0)$ introduced in Definition \ref{26.06.2018--1}. 
\end{thm}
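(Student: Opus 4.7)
The plan is to combine Theorem \ref{reduced_fibre_theorem} with the Henselian structure of $A$ to construct $Z$. Before applying the reduced fibre theorem, we perform some reductions. Since $X$ is irreducible and $\ph$ is proper and surjective, some irreducible component of $Y$ surjects onto $X$; replacing $Y$ by this component with its reduced subscheme structure we may assume $Y$ is integral. Invoking Lemma \ref{25.10.2018--1} (valid because $A$ is Japanese), we further replace $A$ by a finite purely inseparable extension $\tilde A$ and $Y$ by $(Y\otimes_A\tilde A)_\red$. Because purely inseparable extensions are universally homeomorphic, the new $Y$ is integral, and it is now flat over $\tilde A$ with geometrically reduced generic fibre. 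The ring $\tilde A$ remains Henselian and Japanese; we rename it $A$.

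Now Theorem \ref{reduced_fibre_theorem} applies and yields a finite extension of DVRs $B/A$ together with a finite surjective morphism $Z_0\to Y_B$ such that $Z_0$ is flat and proper over $\spc B$ with geometrically reduced fibres; the composite $Z_0\to X$ is proper and surjective. Since geometrically reduced fibres together with flatness imply $H^0$-flatness, $C_0:=H^0(\co_{Z_0})$ is a finite flat (hence free) $B$-algebra. Because $B$ is Henselian (as a finite extension of the Henselian $A$), $C_0$ decomposes as a product of finite local rings, corresponding to a partition of $Z_0$ into open and closed pieces. By irreducibility of $X$, at least one such piece $Z$ surjects onto $X$; we fix this piece and set $C:=H^0(\co_Z)$, a local finite flat $B$-algebra. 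The crucial observation is that $C$ is automatically a DVR. Indeed, by $H^0$-flatness $C\otimes_B k(B)\simeq H^0(\co_{Z\otimes_B k(B)})$ is a geometrically reduced $k(B)$-algebra, hence a finite product of separable field extensions of $k(B)$; being a quotient of the local ring $C$, it is itself local, so it reduces to a \emph{single} separable extension $\ell/k(B)$. Therefore $\g m_C=\pi_B C$, so $\pi_B$ is a uniformizer of $C$ and $C$ is a one-dimensional regular local ring, i.e., a DVR finite over $A$.

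The remaining assertions follow readily. Flatness of $Z\to\spc C$ results from the fact that $\pi_B$ generates the maximal ideal of both $B$ and $C$, so $\pi_B$-torsion-freeness (known from flatness over $B$) coincides with $\pi_C$-torsion-freeness. For geometric reducedness of the fibres, one uses that $C\otimes_B \ell \simeq \ell\times (\ell[\alpha]/\bar g(\alpha))$ splits as a product via the Chinese remainder theorem (since $\bar f(\alpha)=(\alpha-\bar\alpha)\bar g(\alpha)$ factors in $\ell[\alpha]$, separability being crucial); consequently $Z\otimes_C k(C)$ is realised as a direct factor of $Z\otimes_B k(B)\otimes_{k(B)}k(C)$, which is geometrically reduced, and the generic fibre is treated analogously via $\mathrm{Frac}(C)/L$. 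For the $C$-point above $x_0$, the proper $A$-scheme $Z\times_X\spc A$ has non-empty generic fibre; choosing a closed point of the generic fibre of a suitable dominating irreducible component and applying the valuative criterion of properness together with the Henselian and Japanese properties of $A$ (so that the integral closure of $A$ in a finite extension of $K$ is a DVR finite over $A$) produces an $A''$-point of $Z$ above $x_0$ for some DVR $A''$ finite over $A$; arranging $C\supseteq A''$ at the outset (by initially choosing $B$ large enough, or by iterating the construction) yields the desired section. The chief technical obstacle is the DVR claim in the second paragraph, which depends essentially on the interplay between the Henselian property (forcing $C$ to be local) and the geometric reducedness of the fibres (forcing $C\otimes_B k(B)$ to collapse to a single separable field).
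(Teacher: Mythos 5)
Your proposal takes a genuinely different route from the paper's proof. The paper normalizes the integral scheme $Y$ first (Step 3 of its argument), precisely so that $H^0$ of the resulting scheme becomes, via the Henselian hypothesis, a discrete valuation ring $B^\dagger$; then, after applying Lemma \ref{25.10.2018--1} over $B^\dagger$ and the reduced fibre theorem over the resulting $\tilde B$, the identity $H^0 = \tilde B$ is propagated through RFT3 and flat base change, so that at the end $H^0(\co_Z)$ is \emph{automatically} equal to the DVR $C$ one has freely chosen. You skip the normalization and instead observe --- correctly, and this is a nice alternative argument --- that whichever open-and-closed piece $Z$ of the output of the reduced fibre theorem you take, $C := H^0(\co_Z)$ is forced to be a DVR: Henselianity makes $C$ local, and geometric reducedness of the special fibre together with $H^0$-flatness forces $C\otimes_B k(B)$ to be a single separable field extension, so $\pi_B$ generates $\mathfrak m_C$. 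Your treatment of flatness of $Z\to\spc C$ and of the geometric reducedness of its fibres (via separability of $\ell/k(B)$) is also sound.

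The gap is in securing a $C$-point above $x_0$ (condition $\g S3$). Your stated fixes --- ``choosing $B$ large enough'' or ``iterating the construction'' so that $C\supseteq A''$ --- do not work: $C = H^0(\co_Z)$ is an \emph{output} of the construction and you have no means of arranging it to contain a prescribed $A''$ found afterwards, and enlarging $B$ produces a new $Z$ and a new $C$, with no termination argument. The relation you should want is the opposite one, and it comes with a different mechanism: once you have $Z$ with $H^0(\co_Z) = C$ a DVR, apply the valuative criterion of properness to $W := Z\times_X\spc A$ \emph{over $\spc C$} (noting that $W\to\spc C$ is proper and that $W_M$ is non-empty because $Z\to X$ is surjective and $Z\to\spc C$ is flat). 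This yields a finite extension of DVRs $C'\supset C$ and a $C'$-point of $Z$ over $\spc C$ compatible with $C\hookrightarrow C'$, hence a $C'$-point of $Z' := Z\otimes_C C'$ above $x_0$. Since $H^0(\co_{Z'}) = H^0(\co_Z)\otimes_C C' = C'$ by flat base change, and flatness, properness, surjectivity onto $X$ and geometric reducedness of fibres are all stable under the base change $C\to C'$, the scheme $Z'$ satisfies all four conclusions. This is essentially what the paper's Step 6 does (base-changing by a freely chosen DVR and checking that $H^0$ follows along); without it, your argument does not establish $\g S3$.
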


\begin{proof}The construction requires several steps.  

\vspace{.2cm}
\noindent\emph{Step 1.} Because $X$ is {\it irreducible}, there exists an irreducible component $Y'$ of $Y$ such that $\ph:Y'\to X$ is         surjective. 

\vspace{.2cm}
\noindent\emph{Step 2.}
Let $j:Y''\to Y$ be   reduced closed subscheme underlying $Y'$. Note that $Y''$ is integral and that $\ph'':=\ph \circ j:Y''\to X$ is proper and surjective.   

\vspace{.2cm}
\noindent\emph{Step 3.}
Let now $\nu:Y^\dagger\to Y''$ be the normalization \ega{II}{}{p.119}. Since $A$ is universally Japanese \ega{IV}{2}{7.7.2, p.212}, we conclude that $\nu$ is finite, and hence $\ph^\dagger:=\ph''\circ\nu$ is proper and surjective. In addition, $H^0(\co_{Y^\dagger})$ is a normal integral domain (see \cite[2.4.17, p.65]{liu02} and \cite[4.1.5, p.116]{liu02}). 
The morphism $Y^\dagger \to\spc A$ is surjective  and hence flat \cite[4.3.10,p.137]{liu02}; it then follows that $H^0(\co_{Y^\dagger})$ is a finite and flat $A$-module. As such, it must in addition be a local ring, since $A$ is Henselian.  Consequently,  $H^0(\co_{Y^\dagger})$ is a Dedekind domain, and  a fortiori a discrete valuation ring.  In hindsight, the canonical morphism $Y^\dagger\to\spc H^0(\co_{Y^\dagger})$ is surjective and flat \cite[4.3.10,p.137]{liu02}.

\vspace{.2cm}
\noindent\emph{Step 4.}
Let $B^\dagger:=H^0(\co_{Y^\dagger})$ and write $L^\dagger$ for its field of fractions. Let $\tilde L$ be a purely inseparable extension of $L^\dagger$   such that, denoting by $\tilde B$ the integral closure of $B^\dagger$ in $\tilde L$, the generic fibre of   
\[
 (Y^\dagger\ot_{B^\dagger}\tilde B)_\red\aro \spc {\tilde B} 
\]
is geometrically reduced (we employ Lemma \ref{25.10.2018--1}). Writing $
\tilde Y:=(Y^\dagger\ot_{B^\dagger}\tilde B)_\red$, 
we note that  the composition 
\[
\tilde Y\aro {Y^\dagger\ot_{B^\dagger}\tilde B}\arou{\mm{pr}} Y^\dagger
\] 
induces a homeomorphism between the underlying topological spaces. From this, it follows that $\tilde Y$ is  integral and that the obvious morphism $\tilde\ph:\tilde Y\to X$ is surjective. Since $\tilde B$ is a finite $B^\dagger$-module---recall that $A$ is Japanese---, we conclude that       $\tilde\ph$ is proper.  
Since $\tilde B$ is a Dedekind domain,  the fact that  $\tilde Y\to\spc \tilde B$ is surjective (note that $Y^\dagger\ot_{B^\dagger}\tilde B\to\spc\tilde B$ is surjective by \ega{I}{}{3.5.2, p.115}) shows that it is in addition flat.

For future usage, we also remark that 
\begin{align*}H^0(\co_{\tilde Y})&=H^0\left(\co_{Y^\dagger\ot_{B^\dagger}\tilde B}\right)_\red & \text{(see  \cite[2.4.2(c),p.60]{liu02})}
\\
&=\left(H^0(\co_{Y^\dagger})\ot_{B^\dagger}\tilde B\right)_\red &\text{(flat base-change \cite[3.1.24. p.85]{liu02})} 
\\
&=\left(B^\dagger\ot_{B^\dagger}\tilde B\right)_\red& \text{(definition of $B^\dagger$)}
\\
&=\tilde B.&
\end{align*}

\vspace{.2cm}
\noindent\emph{Step 5.} 
By the reduced fibre theorem (Theorem \ref{reduced_fibre_theorem}), 
there exists a finite extension $B^\natural\supset\tilde B$ of discrete valuation rings  and a commutative diagram 
\[
\xymatrix{&Y^\natural\ar[d]^\ep
\\
\tilde Y\ar[d] & \tilde Y\otu{\tilde B}B^\natural \ar[d]\ar[l]
\\
\spc \tilde B&\ar[l]\spc B^\natural}
\]
such that 
\begin{itemize}\item The $B^\natural$-scheme $Y^\natural$ is flat, proper and has geometrically reduced fibres. 
\item The morphism $\ep$ is finite and surjective. 
\item If $L^\natural={\rm Frac}(B^\natural)$, then 
\[
\ep\otu{B^\natural} L^\natural: Y^\natural\otu{B^\natural}L^\natural\aro(\tilde Y\ot_{\tilde B}B^\natural)\otu{B^\natural}L^\natural
\]
is an isomorphism. 
\end{itemize}
Furthermore, 
\begin{align*}
H^0\left(\co_{Y^\natural\otu{B^\natural}L^\natural}\right)&\simeq H^0(\co_{\tilde Y\ot_{\tilde B}L^\natural})&\text{(via $\ep\ot_{B^\natural}L^\natural$)} 
\\
&\simeq H^0(\co_{\tilde Y})\otu{\tilde B}L^\natural&\text{(by flat base-change)}
\\
&\simeq L^\natural&\text{(by Step 4)}.
\end{align*}
Let $\ph^\natural:Y^\natural\to X$ be the composition 
\[
Y^\natural\arou\ep \tilde Y\otu{\tilde B}B^\natural\arou{\rm pr} \tilde Y\arou{\tilde\ph}X;
\]
as $\tilde\ph$ is surjective and proper, it is clear that $\ph^\natural$ is surjective and proper. 

\vspace{.2cm}
\noindent\emph{Step 6.} 
Let now $C\supset B^\natural$ be a finite extension of d.v.r.'s such that $Y^\natural$ has a $C$-point above the $A$-point $x_{0}$ (here we apply the valuative criterion of properness for the $A$-scheme $Y^\natural|_{x_0}$ to obtain a finite extension $A\subset C$). We then define $Z=Y^\natural\ot_{B^\natural}C$ and note that the following claims hold true:  
\begin{itemize}\item The composition   $Z\stackrel{\rm pr}\to Y^\natural\stackrel{\ph^\natural}\to X$
is surjective and proper.
\item  The $C$-scheme $Z$ is flat, proper and has geometrically reduced fibres. 

\item Let $M=\mm{Frac}(C)$. Then, $H^0(\co_{Z\ot_CM})=M$ (because of $H^0(\co_{Y^\natural\ot_{B^\natural}L^\natural})=L^\natural$ and flat base-change).
\item The ring of global functions of $Z$ is $C$  (because of the previous claims and Lemma \ref{03.07.2018--2}.)
\item There is a $C$-point in $Z$ above the $A$-point $x_0$. 
\end{itemize}
This proves all conclusions in the statement. 
\end{proof}

\begin{cor}\label{19.07.2018--1}We suppose that $A$ is Henselian and Japanese.   
  Let $\ph:Y\to X$ and $\ph':Y'\to X$ be surjective and proper morphisms. Then there exists $\ps:Z\to X$ in $\g S^+(X,x_0)$ and a commutative diagram
\[
\xymatrix{&Z\ar[dd]^\ps\ar[dr]\ar[dl]&
\\
Y\ar[dr]_\ph&&Y'\ar[dl]^{\ph'}
\\
&X.&
}
\] 
In addition, it is possible to find $Z$ such that the  extra conditions hold:
  \begin{enumerate}\item The ring  $H^0(\co_Z)$ is a d.v.r. which is a finite extension of $A$,  and 
  \item the canonical morphism $Z\to\spc H^0(\co_Z)$ is flat with geometrically reduced fibres. \end{enumerate}\qed
\end{cor}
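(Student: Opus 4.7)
The plan is to reduce the corollary to Theorem \ref{05.11.2018--1} by forming the fibre product. Set $W:=Y\tiu X Y'$ and consider the projections $\mm{pr}:W\to Y$ and $\mm{pr}':W\to Y'$. Let $\chi:W\to X$ denote the common composition $\ph\circ\mm{pr}=\ph'\circ\mm{pr}'$. First, I would check that $\chi$ is proper and surjective. Properness follows because $\ph$ is proper and $\mm{pr}':W\to Y'$, being a base change of $\ph$ along $\ph'$, is proper, so $\ph'\circ \mm{pr}'=\chi$ is proper. Surjectivity follows pointwise: for $x\in X$, both fibres $\ph^{-1}(x)$ and $\ph'^{-1}(x)$ are nonempty $k(x)$-schemes, and the fibre $\chi^{-1}(x)$ is   $\ph^{-1}(x)\tiu {k(x)} \ph'^{-1}(x)$, which is nonempty because the tensor product of nonzero algebras over a field is nonzero.

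Next, I would invoke Theorem \ref{05.11.2018--1} applied to the proper and surjective morphism $\chi:W\to X$. This directly produces a commutative diagram
\[
\xymatrix{Z\ar[r]\ar[dr]_\ps & W\ar[d]^\chi \\ & X}
\]
in which $\ps$ is proper and surjective, $C:=H^0(\co_Z)$ is a discrete valuation ring finite over $A$, the canonical map $Z\to\spc C$ is flat with geometrically reduced fibres, and $Z$ carries a $C$-point above $x_0$. By Definition \ref{26.06.2018--1}, this means $\ps\in\g S^+(X,x_0)$ and the supplementary conditions (1) and (2) in the statement are precisely items (2) and (3) of Theorem \ref{05.11.2018--1}.

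Finally, the desired commutative triangle in the statement is obtained by composing $Z\to W$ with $\mm{pr}$ and $\mm{pr}'$ respectively: the resulting arrows $Z\to Y$ and $Z\to Y'$ fit into
\[
\xymatrix{&Z\ar[dd]^\ps\ar[dr]\ar[dl]&
\\
Y\ar[dr]_\ph&&Y'\ar[dl]^{\ph'}
\\
&X,&
}
\]
and commutativity follows from the defining property of the fibre product together with $\chi=\ph\mm{pr}=\ph'\mm{pr}'$.

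There is essentially no obstacle here: the corollary is a formal consequence of Theorem \ref{05.11.2018--1} once one packages the two morphisms $\ph,\ph'$ into the single proper and surjective morphism $\chi:Y\tiu X Y'\to X$. The only minor verification is the surjectivity of $\chi$, and this is immediate because tensor products of nonzero algebras over a field remain nonzero.
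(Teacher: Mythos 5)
Your proof is correct and is exactly the argument the paper intends: Corollary \ref{19.07.2018--1} is stated with a bare \qed precisely because one forms the fibre product $W=Y\ti_XY'$, observes that $W\to X$ is proper and surjective, and then applies Theorem \ref{05.11.2018--1} to $W\to X$, composing the resulting $Z\to W$ with the two projections.
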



\begin{dfn}\label{30.10.2018--1}We let $\g T_X$, stand for the full subcategory of $\bb{coh}(X)$ 
whose objects are 
\[
\bigcup_{\ph\in\g S(X,x_0)}{\rm Ob}\, {\g T}^{\rm tan}_\ph.
\]In addition, $\g T_X^\circ$ is the full subcategory of $\g T_X$ having objects which are in addition vector bundles. 
\end{dfn}

In the terminology of the definition we can say: 

\begin{cor} If $A$ is Henselian and Japanese, then the category $\g T_X$ is a full abelian subcategory of $\bb{coh}(X)$ which is in addition stable under tensor products. With this structure and with the functor   $\bullet|_{x_0}:\g T_X\to\modules A$, $\g T_X$ becomes a neutral  Tannakian category over $A$ in the sense of \cite[Definition 1.2.5, p.1109]{duong-hai18}.  
\end{cor}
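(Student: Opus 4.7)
The guiding idea is to reduce every finite-diagram question about $\g T_X$ to the analogous question inside a single $\g T_\ps^{\rm tan}$ with $\ps\in\g S^+(X,x_0)$, where all the Tannakian structure has already been constructed in Definition \ref{14.01.2019--2} and Theorem \ref{06.04.2018--1}. Two observations drive the plan. First, if $\chi:Z\to Y$ is any morphism of $A$-schemes and $\ph:Y\to X$, then the pullback of $\co_Y\ot_AF$ to $Z$ is $\co_Z\ot_AF$, so relative triviality is preserved and $\g T_\ph\subset\g T_{\ph\chi}$; passing to vector bundles and to the $\ot$-generated subcategories this gives $\g T_\ph^\circ\subset\g T_{\ph\chi}^\circ$ and $\g T_\ph^{\rm tan}\subset\g T_{\ph\chi}^{\rm tan}$. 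Second, Corollary \ref{19.07.2018--1}, applied iteratively, permits any finite family $\ph_i\in\g S(X,x_0)$ to be dominated by a common $\ps\in\g S^+(X,x_0)$. Combined, any finite collection of objects of $\g T_X$ lies inside some common $\g T_\ps^{\rm tan}$.

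Next I would verify closure inside such a $\g T_\ps^{\rm tan}$. By Theorem \ref{06.04.2018--1}, $\g T_\ps$ is a full abelian subcategory of $\bb{coh}(X)$, so kernels, cokernels and images between objects of $\g T_\ps^{\rm tan}$ computed in $\bb{coh}(X)$ already belong to $\g T_\ps$. To refine this to $\g T_\ps^{\rm tan}$, I observe that for $\cv,\cv'\in\g T_\ps^\circ$ the sum $\cv\op\cv'$ is again in $\g T_\ps^\circ$, so both $\langle\cv;\g T_\ps\rangle_\ot$ and $\langle\cv';\g T_\ps\rangle_\ot$ sit inside $\langle\cv\op\cv';\g T_\ps\rangle_\ot$. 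Thus the union $\g T_\ps^{\rm tan}=\bigcup_{\cv\in\g T_\ps^\circ}\mathrm{Ob}\,\langle\cv;\g T_\ps\rangle_\ot$ is directed, and any finite collection of its objects is contained in a single $\langle\cv;\g T_\ps\rangle_\ot$; by its very definition this category is closed under subquotients and tensor products. Hence $\g T_\ps^{\rm tan}$ is closed under kernels, cokernels, images and tensor products inside $\g T_\ps\subset\bb{coh}(X)$, and the reduction of the first paragraph transfers all of this to $\g T_X$.

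For the neutral Tannakian structure, Corollary \ref{19.07.2018--2} gives that $\bullet|_{x_0}$ is exact and faithful on each $\g T_\ps$, hence on the union $\g T_X$, and it tautologically respects the tensor product and unit. The ``rigidity'' condition of \cite[Definition 1.2.5]{duong-hai18} is built into the construction: every object of $\g T_X$ is, by the very definition of the ``tan'' subcategory, a quotient of a vector bundle belonging to $\g T_X^\circ$. The only substantive step is the joint cofinality established in the first paragraph, and that in turn rests on Corollary \ref{19.07.2018--1} and hence ultimately on the reduced fibre theorem. Once this is available, all the remaining tensor, abelian, and fibre-functor axioms follow by formal unpacking of the definitions; I therefore expect the main obstacle to lie not in this corollary itself but in the organisational work of fitting the individual $\g T_\ps^{\rm tan}$ together coherently.
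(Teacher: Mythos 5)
Your proposal is correct and follows essentially the same route as the paper's (much terser) proof: dominate any two $\ph,\ph'\in\g S(X,x_0)$ by a common $\ps\in\g S^+(X,x_0)$ via Corollary \ref{19.07.2018--1}, note that $\g T^{\rm tan}_\ph$ and $\g T^{\rm tan}_{\ph'}$ then embed in $\g T^{\rm tan}_\ps$, and invoke Theorem \ref{06.04.2018--1} and Corollary \ref{19.07.2018--2}. Your additional spelling-out — that relative triviality is preserved under pullback along the domination, that each $\g T^{\rm tan}_\ps$ is a directed union of the $\langle\cv;\g T_\ps\rangle_\ot$ which are individually stable under subquotients and tensors — is precisely what the paper leaves implicit, and it is sound.
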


\begin{proof}
Let $\ph:Y\to X$ and $\ph':Y'\to X$ belong to $\g S(X,x_0)$ and $\ps:Z\to X$ be as in Corollary \ref{19.07.2018--1}. Then, $\g T^{\rm tan}_\ph$ and $\g T^{\rm tan}_{\ph'}$ are full subcategories of $\g T^{\rm tan}_\ps$ and all the claims made in the statement follow from  Theorem \ref{06.04.2018--1} and Corollary \ref{19.07.2018--2}.
\end{proof}

\begin{dfn}\label{19.07.2018--3}The fundamental group scheme of $X$ at the point $x_0$ is the affine and flat group scheme obtained from $\g T_X$ and the functor $\bullet|_{x_0}:\g T_X\to\modules A$ via Tannakian duality \cite[Theorem 1.2.6]{duong-hai18}.  It shall be denoted by $\Pi (X,x_0)$. 

If $\ce\in\g T_X^\circ$, we let $\mm{Gal}'(\ce,\g T_X,x_0)$ be the flat group scheme over $A$ defined by the category $\langle\ce\rangle_\ot$ and the functor $\bullet|_{x_0}$. 
\end{dfn}

The next result clarifies the relation between 
 $\g T_X$ and its constituents $\g T_\ph$. 

\begin{prp}\label{07.11.2018--1}
Let $\ph:Y\to X$ be an arrow of $\g S(X,x_0)$ and $\ce\in\g T_\ph^\circ$.   
Then, the natural morphism 
$\nu:\Pi(X,x_0)\to\Pi(X,\ph,x_0)$ is faithfully flat, while 
${\rm Gal}'(\ce,\g T_X,x_0)\to{\rm Gal}'(\ce,\g T_\ph,x_0)$   is an isomorphism.  
\end{prp}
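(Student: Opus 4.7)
The plan is to reduce both assertions to the single observation that $\g T_\ph^{\rm tan}$ is a full subcategory of $\g T_X$ which is stable under the formation of subquotients computed inside $\g T_X$ (equivalently, inside $\bb{coh}(X)$, since both $\g T_\ph^{\rm tan}$ and $\g T_X$ sit as full subcategories of $\bb{coh}(X)$).

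First I would verify this stability. Pick $\cf\in\g T_\ph^{\rm tan}$: by Definition \ref{14.01.2019--2}(iii) there exist a vector bundle $\cv\in\g T_\ph^\circ$ and multi-indices $a,b\in\NN^m$ such that $\cf$ is a subquotient of $\bb T^{a,b}\cv$ inside $\g T_\ph$. Given any subobject or quotient $\cm$ of $\cf$ formed in $\g T_X$, transitivity of subquotients shows that $\cm$ is a subquotient of $\bb T^{a,b}\cv$ in $\bb{coh}(X)$. Because $\bb T^{a,b}\cv\in\g T_\ph$ and because $\g T_\ph$ is an abelian subcategory of $\bb{coh}(X)$ with exact inclusion (Theorem \ref{06.04.2018--1}), one concludes $\cm\in\g T_\ph$, hence $\cm\in\langle\cv\,;\,\g T_\ph\rangle_\ot\subset\g T_\ph^{\rm tan}$.

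The isomorphism statement for the Galois groups is then immediate. By definition, the categories $\langle\ce\,;\,\g T_\ph\rangle_\ot$ and $\langle\ce\,;\,\g T_X\rangle_\ot$ consist of the subquotients of $\bb T^{a,b}\ce$ formed in $\g T_\ph$ and $\g T_X$ respectively; the previous paragraph shows these two classes agree as full subcategories of $\bb{coh}(X)$. Tannakian duality applied to the fibre functor $\bullet|_{x_0}$ then turns the identity of categories into the claimed isomorphism ${\rm Gal}'(\ce,\g T_X,x_0)\stackrel\sim\to{\rm Gal}'(\ce,\g T_\ph,x_0)$.

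For the faithful flatness of $\nu$ I would proceed fibre-wise. The inclusion $\iota:\g T_\ph^{\rm tan}\hookrightarrow\g T_X$ corresponds, under the Tannakian equivalences $\bullet|_{x_0}$, to the restriction functor $\nu^\#:\rep A{\Pi(X,\ph,x_0)}\to\rep A{\Pi(X,x_0)}$. Tensoring with $K$ and with $k$ gives fully faithful exact $\otimes$-functors between neutral Tannakian categories over $K$ and $k$ which remain closed under subquotients by the stability established above. The classical criterion \cite[Proposition 2.21(a)]{deligne-milne82} then yields faithful flatness of $\nu_K$ and $\nu_k$. The main (though not difficult) obstacle is passing from the two fibres to $A$; this is exactly Theorem 4.1.1 of \cite{duong-hai18}, already invoked in the proof of Lemma \ref{21.03.2019--1}, and its application completes the argument.
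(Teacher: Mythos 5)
There is a genuine gap at the heart of your argument. You reduce everything to the claim that any $\g T_X$-subquotient $\cm$ of an object $\bb T^{a,b}\cv$ with $\cv\in\g T_\ph^\circ$ must lie in $\g T_\ph$, and you justify this by ``Because $\bb T^{a,b}\cv\in\g T_\ph$ and because $\g T_\ph$ is an abelian subcategory of $\bb{coh}(X)$ with exact inclusion (Theorem \ref{06.04.2018--1}), one concludes $\cm\in\g T_\ph$.'' This is a non sequitur. Being a full abelian subcategory with exact inclusion only says that if $u:\ce\to\cf$ is an arrow \emph{already in} $\g T_\ph$, then $\ker u$ and $\mathrm{coker}\,u$ (formed in $\bb{coh}(X)$) belong to $\g T_\ph$. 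It says nothing about a subsheaf $\cm\subset\bb T^{a,b}\cv$ that is not a priori in $\g T_\ph$ — indeed $\g T_\ph$ itself illustrates this: $\co_X\in\g T_\ph$, yet a typical ideal sheaf $\ci\subset\co_X$ has $\ph^*\ci$ far from trivial. Restricting $\cm$ to lie in $\g T_X$, i.e.\ in some $\g T_\ps^{\rm tan}$, does not help: knowing that $\cm$ becomes trivial after pull-back by \emph{some other} $\ps$ gives no formal reason for it to become trivial after pull-back by the given $\ph$. What you are taking for granted is precisely the content of the proposition, and the argument becomes circular: stability of $\g T_\ph^{\rm tan}$ under $\g T_X$-subquotients is in fact a \emph{consequence} of faithful flatness of $\nu$ (via \cite[Theorem 4.1.2]{duong-hai18}), not a premise that abstract nonsense can supply.

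The missing ingredient is geometric, and it is exactly what the paper brings in. For a quotient $q:\bb T^{a,b}\cv\to\cq$ with $\cq\in\g T_\ps^\circ$, pulling back by $\ph$ gives a quotient map $\co_Y^{\oplus r}\to\ph^*\cq$ of vector bundles; since $\det\ph^*\cq$ is trivialized by some proper surjective morphism, Lemma \ref{24.05.2018--1} forces $\ph^*\cq|_{Y_k}$ and $\ph^*\cq|_{Y_K}$ to be trivial, and then Lemma \ref{03.05.2018--1} (using $H^0$-flatness of $Y$) upgrades this to triviality of $\ph^*\cq$ itself. A similar argument works in the $\pi$-torsion case using local freeness over $X_k$. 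These are the steps your outline omits, and they cannot be replaced by appealing to exactness of the inclusion $\g T_\ph\hookrightarrow\bb{coh}(X)$. The surrounding scaffolding in your proposal — reducing to two conditions, invoking \cite[2.21]{deligne-milne82} on the fibres, and gluing over $A$ by \cite[Theorem 4.1.1]{duong-hai18}, then deducing the isomorphism for the Galois groups from faithful flatness — is structurally in line with the paper's proof, but rests on the unproved stability claim.
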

\begin{proof}Let us write $G:=\Pi(X,x_0)$ and $G^\ph:=\Pi(X,\ph,x_0)$. In addition, given any $A$-linear category $\g C$, we denote by $\g C_{(k)}$ the full subcategory of $\g C$ whose objects are ``annihilated by $\pi$'', meaning  that multiplication by $\pi$ coincides with 0. 
We shall prove that 
\begin{enumerate}[i)]\item For any $V\in\repo A{G^\ph}$ and any quotient morphism $q:\nu^\#(V)\to Q$ with $Q\in\repo AG$, there exists $Q^\ph\in\repo A{G^\ph}$ such that $\nu^\#(Q^\ph)=Q$.
\item For any $M\in \rep A{G^\ph}_{(k)}$ and any quotient morphism $q:\nu^\#(M)\to Q$ with $Q\in\rep AG_{(k)}$, there exists $Q^\ph\in\rep A{G^\ph}$ such that $\nu^\#(Q^\ph)=Q$.
\end{enumerate}
If these two conditions are verified, then the evident morphisms $A[G^\ph]\to A[G]$ and $k[G^\ph]\to k[G]$ are injective (as follows from the ``dual statements'' in  \cite[3.2.1]{duong-hai18} and \cite[2.21, p.139]{deligne-milne82}) and Theorem 4.1.1 in \cite{duong-hai18} proves that $\nu$ is faithfully flat.

To verify (i), we give ourselves a morphism  $\ps:Z\to X$ in $\g S(X,x_0)$,  an object $\cq$ from $\g T_{\ps}^\circ$, an object $\cv$ of $\g T_\ph^\circ$ and an epimorphism $q:\cv\to\cq$, and aim at showing that $\cq$ lies in $\g T_\ph^\circ$. By Lemma \ref{03.05.2018--1}, we only need to show that $\ph^*(\cq)|_{Y_k}$ and $\ph^*(\cq)|_{Y_K}$ are both trivial. But this is a direct consequence of  Lemma \ref{24.05.2018--1}. 

To verify (ii) we give ourselves 
an object $\cm$ of $(\g T^{\rm tan}_{\ph})_{(k)}$, 
a morphism  $\ps:Z\to X$ in $\g S(X,x_0)$, an object $\cq$ of $(\g T^{\rm tan}_{\ps})_{(k)}$ and an epimorphism $q:\cm\to\cq$. Since $\cm$ and $\cq$ are locally free $\co_{X_k}$-modules \cite[Remarks (a), p.226]{biswas-dos_santos11},  the exact same argument as before proves that $\cq$ lies in $\g T_{\ph,(k)}$. Being a quotient of $\cm$, which belongs to $(\g T^{\rm tan}_{\ph})_{(k)}$, $\cq$ must be in $(\g T^{\rm tan}_{\ph})_{(k)}$. 

The natural arrow ${\rm Gal}'(\ce,\g T,x_0)\to{\rm Gal}'(\ce,\g T_\ph,x_0)$ is  induced by the obvious fully faithful functor $\langle\ce\,;\,\g T_\ph\rangle_\ot\to\langle\ce\,;\,\g T\rangle_\ot$. Now, any   $\cv$ in $\langle\ce\,;\,\g T\rangle_\ot$ is of the form $\cv'/\cv''$, where $\cv''\subset\cv'\subset\bb T^{a,b}\ce$. Since $\nu$ is faithfully flat, Theorem 4.1.2  of \cite{duong-hai18} shows that 
 $\cv'$ and $\cv''$ belong to $\g T_\ph^\circ$; consequently $\cv$ is an object of $\langle\ce\,;\,\g T_\ph\rangle_\ot$ and the fully faithful  functor $\langle\ce\,;\,\g T_\ph\rangle_\ot\to\langle\ce\,;\,\g T\rangle_\ot$ is an equivalence. \end{proof}

In order to make some properties of $\Pi(X,x_0)$ conspicuous, let us make the following definitions. 

\begin{dfn}\label{25.02.2019--1}Let   $G\in\bb{FGSch}/A$ be given. 
 Let $\bb P$ be one of the adjectives ``finite'', ``quasi-finite'' or ``pseudo-finite''. 
We say that $G$ is pro-$\bb P$ (respectively strictly pro-$\bb P$) if there exists a  directed set $I$ and a diagram $\{\nu_{ij}:G_j\to G_i\,:\,i\le j\in I\}$ in $\bb{FGSch}/A$ where each $G_i$ is $\bb P$ (respectively each $G_i$ is $\bb P$ and each $\nu_{ij}$ is faithfully flat) such that $G\simeq \lip_iG_i$. (For the definition of quasi-finite, the reader is directed to \ega{II}{}{6.2.3, p.115}.)
\end{dfn}

\begin{thm}\label{15.03.2019--1}
We suppose that $A$ is Henselian and Japanese,  and moreover assume that $X_K$ is normal.  
\begin{enumerate}
\item[(1)] The group scheme $\Pi(X,x_0)$ is pro-quasi-finite.
\item[(1')]  The group scheme $\Pi(X,x_0)$ is strictly pro-pseudo-finite.   
\item[(2)] 
If in addition $A$ is complete,  then, for each $\ce\in\g T_X^\circ$, the  group scheme $\mm{Gal}'(\ce,\g T_X,x_0)$ is finite and  $\Pi(X,x_0)$ is strictly pro-finite.

\end{enumerate}
\end{thm}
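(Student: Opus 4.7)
The plan is to prove (2) first by combining the prudence machinery of Section \ref{14.03.2019--1} with Proposition \ref{07.11.2018--1}, and then to deduce (1) and (1') via a direct quasi-finiteness argument that does not require completeness. A preliminary observation used throughout is that any $\ce \in \g T_X^\circ$ lies in some $\g T_\ph^\circ$ with $\ph \in \g S^+(X, x_0)$: indeed, Definition \ref{30.10.2018--1} produces some $\ph' \in \g S(X, x_0)$ with $\ce \in \g T_{\ph'}^\circ$, and Corollary \ref{19.07.2018--1} yields a dominating $\ph \in \g S^+(X, x_0)$ (pull-back of a module trivial relative to $A$ remains trivial relative to $A$). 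Proposition \ref{07.11.2018--1} then gives a canonical isomorphism $\mm{Gal}'(\ce, \g T_X, x_0) \simeq \mm{Gal}'(\ce, \g T_\ph, x_0)$, so properties proved in $\g T_\ph$ transfer to $\g T_X$.

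For (2), with $A$ complete, Corollary \ref{25.06.2018--2} applied to $\ph \in \g S^+$ yields finiteness of $\mm{Gal}'(\ce, \g T_\ph, x_0)$, whence $\mm{Gal}'(\ce, \g T_X, x_0)$ is finite. Writing $\Pi(X, x_0) = \lip_{\ce \in \g T_X^\circ} \mm{Gal}'(\ce, \g T_X, x_0)$, the task is to verify that the transition morphisms $\mm{Gal}'(\cf) \to \mm{Gal}'(\ce)$ (for $\ce \in \langle\cf\,;\,\g T_X\rangle_\ot$) are faithfully flat. Fibrewise this follows from the standard Tannakian criterion (cf.\ \cite[2.21, p.139]{deligne-milne82} or Lemma \ref{29.11.2018--1}), since $\langle\ce\rangle_\ot$ is a full subcategory of $\langle\cf\rangle_\ot$ closed under subquotients; \cite[Theorem 4.1.1]{duong-hai18} then extends this to faithful flatness over $A$. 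Thus $\Pi(X, x_0)$ is strictly pro-finite.

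For (1), drop the completeness assumption. Theorem \ref{11.04.2019--2} still yields finiteness of the generic fibre $G_K$ of $G := \mm{Gal}'(\ce, \g T_X, x_0)$. To complete the proof that $G$ is quasi-finite, two observations suffice. First, $G$ is of finite type over $A$: $\ce$ corresponds to a faithful representation of $G$ on the finite free $A$-module $\ce|_{x_0}$ (being a tensor generator of the full Tannakian subcategory), so Tannakian duality over a Dedekind base (cf.\ \cite{duong-hai18}) produces a closed immersion $G \hookrightarrow GL(\ce|_{x_0})$. Second, $G$ is a flat affine group scheme of finite type over $A$ with a unit section $e: \spc A \to G$ whose image is connected; local constancy of the fibre dimension for flat morphisms of finite type \ega{IV}{3}{14.2.6} then forces $\dim_{e_k} G_k = \dim_{e_K} G_K = 0$, and by translation invariance within $G_k$ we conclude $\dim G_k = 0$. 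A $0$-dimensional affine scheme of finite type over a field is finite, so $G_k$ is finite. Hence $G$ is quasi-finite, yielding (1).

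For (1'), quasi-finite implies pseudo-finite, and the same faithful flatness of transition morphisms from (2) applies, so $\Pi(X, x_0)$ is strictly pro-pseudo-finite. The main obstacle of the plan is the verification that $G$ is of finite type over $A$: over a field this is \cite[2.21, p.139]{deligne-milne82}, but over a discrete valuation ring one must carefully invoke the correct version of Tannakian duality (as developed in \cite{duong-hai18}) to ensure that a faithful representation on a free $A$-module yields a closed immersion into the corresponding general linear group scheme.
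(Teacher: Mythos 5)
Your treatment of part (2) and the preliminary reduction (locating each $\ce\in\g T_X^\circ$ inside some $\g T_\ph^\circ$ with $\ph\in\g S^+$ via Corollary \ref{19.07.2018--1} and Proposition \ref{07.11.2018--1}, then invoking Corollary \ref{25.06.2018--2}) track the paper's argument. The faithful flatness of the transition arrows $\mm{Gal}'(\cf)\to\mm{Gal}'(\ce)$ is correct in substance, though the paper cites \cite[Theorem 4.1.2]{duong-hai18} rather than 4.1.1.

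The gap is in part (1), and you have correctly located it yourself: the assertion that $G=\mm{Gal}'(\ce,\g T_X,x_0)$ is of finite type over $A$ because the tensor generator $\ce|_{x_0}$ ``produces a closed immersion $G\hookrightarrow\GL(\ce|_{x_0})$.'' Over a Dedekind ring this is exactly where Tannakian duality diverges from the field case, and \cite{duong-hai18} does not supply such a closed immersion criterion in the form you need. The ring $A[G]$ is a coend over all objects of $\langle\ce\,;\,\g T_X\rangle_\ot$, and because $A$-module inclusions such as $\pi M\hookrightarrow M$ need not split, subquotients can contribute to the coend beyond what comes from the tensor powers $\bb T^{a,b}(\ce|_{x_0})$; there is no a priori reason that $A[\GL(\ce|_{x_0})]\to A[G]$ is surjective. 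Notice that the paper never asserts that the individual $\mm{Gal}'(\ce)$ are of finite type at this stage: the proof of Theorem \ref{15.03.2019--1}(1)--(1') passes instead through Lemma \ref{19.03.2019--1}, which replaces the system $\{\mm{Gal}'(\ce)\}$ by a different pro-system supplied by the structure theorem \cite[Theorem 2.17]{duong-hai-dos_santos18}, whose members $G_{\al,i}$ \emph{are} flat of finite type over $A$ by construction; your ``locally constant fibre dimension plus translation invariance'' argument is then applied (implicitly) to those $G_{\al,i}$, where it is sound. The eventual proof that each $\mm{Gal}'(\ce)$ is actually finite without completeness is Theorem \ref{25.03.2019--1}, and it requires the prudence machinery of Section \ref{14.03.2019--1} together with a base-change to the completion $\wh A$ and the criterion of Lemma \ref{21.03.2019--1}; it is not a formal consequence of the Tannakian set-up. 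In short: to repair (1), you should either substitute the paper's Lemma \ref{19.03.2019--1} (and the underlying structure theorem for affine flat group schemes over a DVR), or first prove the finite-type claim for $\mm{Gal}'(\ce)$ by a genuinely new argument; merely invoking Tannakian duality over $A$ will not do.
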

We note that this result shall be improved below---see Theorem \ref{25.03.2019--1}---by a careful application of (2) and Lemma \ref{21.03.2019--1}, but not to make the argument overly involved, we opt for less generality now.

\begin{proof}We consider the set of isomorphism classes $I$ of objects in $\g T^\circ_X$ and order it by decreeing that $\ce\le\cf$ if and only if $\ce\in\langle\cf\rangle_\ot$. Putting $G_\ce:=\mm{Gal}'(\ce,\g T_X,x_0)$, we  obtain a system of group schemes  $\{\nu_{ \ce\cf}:G_\cf\to G_\ce\}$ whose limit is $\Pi(X,x_0)$.

(1) and (1'). We know that for each $\ce\in\g T_X^\circ$ the group   $G_\ce\ot K$ is finite (by Proposition \ref{07.11.2018--1} and Theorem \ref{11.04.2019--2}). Hence, the proof follows from the
\begin{lem}\label{19.03.2019--1} Let $G\in\bb{FGSch}/A$ be such that $G\ot K$ is a pro-finite group scheme. Then $G$ is (a) strictly pro-pseudo-finite, and (b) is pro-quasi-finite.
\end{lem}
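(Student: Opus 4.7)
\emph{Strategy.} The plan is to present $G$ as a cofiltered limit $G=\lip_\al G_\al$ of \emph{finite-type} flat affine group schemes over $A$ with faithfully flat transition morphisms and with faithfully flat structure maps $G\to G_\al$, and then to verify that each such $G_\al$ is automatically both pseudo-finite and quasi-finite. This would yield (a) and (b) in one stroke.

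\emph{The presentation.} For the existence of such a presentation I would appeal to the standard structure theorem for flat affine group schemes over a noetherian base; in our context of a discrete valuation ring it is particularly clean because every sub-$A$-module of a flat $A$-module is torsion-free and therefore flat. Concretely, one writes $A[G]=\bigcup_\al B_\al$ as the filtering union of its finitely generated $A$-flat sub-Hopf algebras, sets $G_\al=\spc B_\al$, and uses a Takeuchi-type theorem---whose version adapted to the $A$-flat setting can be imported from \cite{duong-hai18}---to conclude that each $G\to G_\al$ is faithfully flat.

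\emph{Finiteness of the fibres.} Granted the presentation, I would next check that each $G_\al$ has finite fibres as a group scheme. For the generic fibre, $G_\al\otimes K$ is a faithfully flat quotient of the pro-finite $G\otimes K$; dualising gives an inclusion of the finite-type $K$-algebra $K[G_\al]$ into $K[G\otimes K]=\lid_i K[H_i]$, where each $K[H_i]$ is finite-dimensional. A finite set of generators of $K[G_\al]$ must lie in some $K[H_{i_0}]$, forcing $K[G_\al]\subset K[H_{i_0}]$ to be finite-dimensional; thus $G_\al\otimes K$ is a finite $K$-group scheme. Since $G_\al\to\spc A$ is flat and of finite type, the relative-dimension function is locally constant on $\spc A$, so $\dim(G_\al\otimes k)=\dim(G_\al\otimes K)=0$; as a $0$-dimensional affine group scheme of finite type over the field $k$, $G_\al\otimes k$ is finite. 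Thus each $G_\al$ is pseudo-finite, and being of finite type over $A$ with finite fibres it is also quasi-finite. The presentation $G=\lip_\al G_\al$ then delivers both (a) and (b) at once.

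\emph{Main obstacle.} The crux of the argument is the structure theorem invoked at the outset---namely, the faithful flatness of the inclusions of finitely generated $A$-flat sub-Hopf algebras into $A[G]$. This is the Takeuchi-type input referred to above and is the only non-formal ingredient; once granted, the remainder of the proof is a routine comparison of dimensions via the standard formula for flat morphisms.
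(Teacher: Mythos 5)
Your strategy hinges on producing a single presentation $G=\lip_\al G_\al$ in which the $G_\al$ are \emph{of finite type over $A$} \emph{and} all the transition morphisms are faithfully flat. This is exactly the point on which the argument breaks down, and the "Takeuchi-type theorem" you invoke to justify it is precisely what fails over a d.v.r.

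Over a field, any inclusion of commutative Hopf algebras is automatically faithfully flat, so the filtering union $A[G]=\bigcup_\al B_\al$ by finitely generated sub-Hopf algebras does produce a strict pro-finite-type presentation. Over $A$ this is no longer true: by \cite[Theorem 4.1.1]{duong-hai18}, a morphism of flat $A$-Hopf algebras $B_\al\to A[G]$ is faithfully flat if and only if both $B_\al\ot K\to K[G]$ and $B_\al\ot k\to k[G]$ are. The generic-fibre condition is automatic (classical Takeuchi), but the special-fibre condition genuinely fails in general: even when $B_\al$ and $A[G]$ are both $A$-flat, the quotient $A[G]/B_\al$ can have $\pi$-torsion, so $B_\al\ot k\to k[G]$ need not even be injective, let alone faithfully flat. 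Concretely, infinite towers of N\'eron blowups (of $\mu_p$ over a mixed-characteristic $A$, say) yield flat affine group schemes whose natural filtration by finite-type flat subgroups has transition maps that are isomorphisms on generic fibres but \emph{not} surjective on special fibres; one cannot repair this by passing to a different cofinal system.

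The paper sidesteps this by using \emph{two different} presentations extracted from \cite[Theorem 2.17]{duong-hai-dos_santos18}. First, $G=\lip_\al G_\al$ with faithfully flat transition maps and each $G_\al$ flat with $G_\al\ot K$ of finite type — but the $G_\al$ themselves are \emph{not} asked to be of finite type over $A$. These $G_\al$ are then shown to be pseudo-finite (here the paper uses the elementary lifting argument $\dim_k M\ot k\le\dim_K M\ot K$ for flat $M$, which is valid without any finite-type hypothesis — unlike your relative-dimension argument, which only applies to morphisms of finite type). This gives (a). Second, each $G_\al$ is separately written as $\lip_iG_{\al,i}$ with $G_{\al,i}$ flat \emph{of finite type} but with transition maps that are only required to induce isomorphisms on generic fibres (not faithfully flat). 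This gives (b). Note the two conclusions of the Lemma are deliberately asymmetric — strictly pro-pseudo-finite but merely pro-quasi-finite — and your proposed presentation, if it existed, would give the stronger conclusion "strictly pro-quasi-finite," which the paper carefully avoids claiming. That asymmetry is a good signal that the presentation you want is not available.
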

\begin{proof}
 Following \cite[Theorem 2.17,p. 989]{duong-hai-dos_santos18}, we write $G=\lip_\al G_\al$ where each $G_\al$ lies in $(\bb{FGSch}/A)$,  the transition morphisms are faithfully flat and  each $G_\al\ot K$ is of finite type over $K$. 
Since $G\ot K$ is pro-finite, $G_\al\ot K$ is in fact finite. But  for a flat $A$-module $M$, the inequality  $\dim_KM\ot K<\infty$ entails $\dim_kM\ot k<\infty$, as we see by lifting a linearly independent set in $M\ot k$ to $M$. We conclude that   $G_\al$ is pseudo-finite. 
In addition, by the same Theorem in \cite{duong-hai-dos_santos18}, we know that $G_\al$ is a projective limit $\lip_iG_{\al,i}$ where $G_{\al,i}$ is flat and of finite type over $A$, and the transition morphisms $G_{\al,j}\to G_{\al,i}$ induce isomorphisms on the generic fibres. In particular, $G_{\al,i}\ot K\simeq G_\al\ot K$, and hence $G_{\al,i}\ot K$  is again finite.  By the same argument as before $G_\al$ is, being of finite type, quasi-finite. 
 
\end{proof}
(2) In view of Corollary \ref{25.06.2018--2} and Proposition \ref{07.11.2018--1}, only the final statement needs proof. 
Due to \cite[Theorem 4.1.2]{duong-hai18} each arrow $\nu_{\ce\cf}:G_\cf\to G_{\ce}$ is  faithfully flat, which is enough argument.
 \end{proof}

We now preset the amplification of Theorem \ref{15.03.2019--1} already mentioned before.

\begin{thm}\label{25.03.2019--1}We suppose that $A$ is Henselian and Japanese,  and moreover assume that $X_K$ is normal.   Then $\Pi(X,x_0)$ is strictly pro-finite. In particular, if $\ce\in\g T_X^\circ$, then $\mm{Gal}'(\ce,\g T,x_0)$ is finite.   \end{thm}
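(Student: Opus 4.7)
The plan is to bootstrap from the complete case established in Theorem \ref{15.03.2019--1}(2) by passing to the $\pi$-adic completion $A\to\hat A$ and descending, with the comparison map controlled via Lemma \ref{21.03.2019--1}.

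First, I would let $\hat A$ be the completion of $A$, set $\hat X:=X\otimes_A\hat A$ with induced base point $\hat x_0$, and verify that the hypotheses of Theorem \ref{15.03.2019--1}(2) hold for $\hat X$ over $\hat A$. The ring $\hat A$ is complete, hence Henselian and Japanese; assuming $X_{\hat K}$ remains normal, Theorem \ref{15.03.2019--1}(2) gives that every $\mm{Gal}'(\hat\cf,\g T_{\hat X},\hat x_0)$ with $\hat\cf\in\g T_{\hat X}^\circ$ is finite over $\hat A$. Fix $\ce\in\g T_X^\circ$ and write $G:=\mm{Gal}'(\ce,\g T_X,x_0)$, $\hat\ce:=\ce\otimes_A\hat A$, $H:=\mm{Gal}'(\hat\ce,\g T_{\hat X},\hat x_0)$. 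The base-change tensor functor $\langle\ce;\g T_X\rangle_\ot\to\langle\hat\ce;\g T_{\hat X}\rangle_\ot$ yields, via Tannakian duality, a morphism of affine group schemes over $\hat A$
\[
\pi:H\longrightarrow G\otimes_A\hat A.
\]
By Theorem \ref{15.03.2019--1}(1'), $G\otimes_A\hat A$ is pseudo-finite over $\hat A$, which makes Lemma \ref{21.03.2019--1} available.

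The pivotal step is to show $\pi$ is faithfully flat. By Lemma \ref{21.03.2019--1}, this reduces to verifying $V^H=\hat A\cdot 1$ for every $V\in\g s(G\otimes\hat A)$, and the parallel identity on the residue field. Applying the same lemma to the (tautologically faithfully flat) identity $\mm{id}:G\otimes\hat A\to G\otimes\hat A$ gives $V^{G\otimes\hat A}=\hat A\cdot 1$. On the other hand, under Tannakian duality any such $V$ corresponds to a coherent sheaf $\cv\subset\bb T^{a,b}\hat\ce$ on $\hat X$ which lies simultaneously in $(\langle\ce;\g T_X\rangle_\ot)\otimes\hat A$ and in $\langle\hat\ce;\g T_{\hat X}\rangle_\ot$; both invariant computations $V^{G\otimes\hat A}$ and $V^H$ are then identified with the intrinsic sheaf-theoretic invariant $H^0(\hat X,\cv)$. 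Hence $V^H=V^{G\otimes\hat A}=\hat A\cdot 1$, as required, and the argument on the residue fibre is strictly analogous.

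With $\pi$ known to be faithfully flat, the dual morphism $\hat A[G\otimes\hat A]=A[G]\otimes_A\hat A\hookrightarrow\hat A[H]$ is injective. Since $H$ is finite, $\hat A[H]$ is a finitely generated $\hat A$-module, hence so is its sub-Hopf-algebra $A[G]\otimes_A\hat A$ by Noetherianity of $\hat A$. Faithfully flat descent of finite generation along $A\to\hat A$ then yields that $A[G]$ is finitely generated over $A$, so that $G$ is a finite group scheme. Running this argument over all $\ce\in\g T_X^\circ$ and combining with the faithful flatness of the transition maps $G_\cf\to G_\ce$ already established in the proof of Theorem \ref{15.03.2019--1}(1'), we conclude that $\Pi(X,x_0)=\lip_\ce G_\ce$ is strictly pro-finite.

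The main obstacles are twofold: (i) justifying that $X_{\hat K}$ is normal, so that Theorem \ref{15.03.2019--1}(2) can be applied to $\hat X$ over the complete ring $\hat A$; and (ii) the careful identification $V^H=V^{G\otimes\hat A}$ via sheaf-theoretic global sections on $\hat X$, which is what makes the invariant criterion of Lemma \ref{21.03.2019--1} effective for the comparison map $\pi$.
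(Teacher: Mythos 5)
Your overall strategy coincides with the paper's: pass to the completion $\wh A$, compare the group scheme over $\wh A$ with the base change of $G$ via Lemma \ref{21.03.2019--1}, deduce finiteness of $G\ot_A\wh A$ from the complete case (Theorem \ref{15.03.2019--1}(2)), and descend. Using $H=\mm{Gal}'(\wh\ce,\g T_{\wh X},\wh x_0)$ as the source (rather than $\Pi(\wh X,\wh x_0)$, as the paper does) is a harmless and in fact slightly cleaner variant. You also correctly flag that $X_{\wh K}$ needs to be shown normal; the paper resolves this using the Japanese hypothesis (so $\wh K/K$ is separable by \ega{IV}{2}{7.6.6} and normality is preserved by \ega{IV}{2}{6.7.4}).

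However, your crucial step is not justified. You claim that for $V\in\g s(G\ot\wh A)$, the invariants $V^{G\ot\wh A}$ and $V^H=\xi(V)^H$ are ``both identified with the intrinsic sheaf-theoretic invariant $H^0(\wh X,\cv)$.'' Only the second identification is automatic (it is Tannakian duality for $H$ via the equivalence $\rep{\wh A}H\simeq\langle\wh\ce\,;\,\g T_{\wh X}\rangle_\ot$). The identification $V^{G\ot\wh A}=H^0(\wh X,\cv)$ has no a priori meaning: the category $\rep{\wh A}{G\ot\wh A}$ is not realized as a category of coherent sheaves on $\wh X$, and for a general $V\in\g s(G\ot\wh A)$ there is no object $\cv_0\in\langle\ce\,;\,\g T_X\rangle_\ot$ with $V\simeq\cv_0|_{x_0}\ot\wh A$. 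What is tautologically true, from $\xi=\pi^\#$, is only the inclusion $V^{G\ot\wh A}\subset\xi(V)^H$; the reverse inclusion is precisely what faithful flatness of $\pi$ amounts to, so asserting it as an identification is circular. The paper closes exactly this gap: using \cite[1.4 and 1.5, Prop.~2]{serre68}, every $V\in\g s(G\ot\wh A)$ embeds $G\ot\wh A$-equivariantly into $i(V^\flat)=V^\flat\ot_A\wh A$ for some $V^\flat\in\g s(G)$; since $V^\flat\ot\wh A$ \emph{does} correspond to a sheaf obtained by base change (namely $\te(V^\flat)\ot\wh A$), flat base change gives $\tau(V^\flat)^H=H^0(\wh X,\te(V^\flat)\ot\wh A)\simeq\wh A$, and then $\wh A\po 1\subset\xi(V)^H\subset\tau(V^\flat)^H\simeq\wh A$ forces equality. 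The analogous injection into $i(M^\flat)$ handles the residue-field condition. Without this embedding step the verification of Lemma \ref{21.03.2019--1} is incomplete. The remainder of your argument (finiteness of $\wh A[H]$, hence of $A[G]\ot\wh A$, then descent of finite generation along the faithfully flat $A\to\wh A$, and the previously established faithful flatness of transition maps) is sound.
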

\begin{proof}
Let \[
\Pi(X,x_0)\aro G 
\]
be a faithfully flat morphism to a pseudo-finite flat group scheme over $A$. (The existence of such an arrow is assured by Theorem \ref{15.03.2019--1}.)  
Define \[\te:\rep A{G}\aro \g T_X\] as the composition of the natural functor $\rep A{G}\to
\rep A{\Pi(X,x_0)}$ with a tensor inverse to $\bullet|_{x_0}:\g T_X\to\rep A{\Pi(X,x_0)}$. (That such an inverse exists is proved in \cite[I.4.4.2, p.69]{saavedra72}.)
Since $\Pi(X,x_0)\to G$ is faithfully flat, we conclude $\rep A{G}\to\rep A{\Pi(X,x_0)}$
  is fully faithful (see \cite[Proposition 3.2.1(ii)]{duong-hai18}, for example) and hence that $\te$ is fully faithful.
  
Let $\wh A$ be the completion of $A$ and write $\wh X$ for the base-change $X\ot_A\wh A$; note that 
$\wh X$ is a flat and proper $\wh A$-scheme with geometrically reduced fibres and that 
$x_0$ induces an $\wh A$-point $\wh x_0$ on it. In addition, since $A$ is assumed Japanese, we can say that $\wh K$, the field of fractions of $\wh A$, is a separable extension of $K$ \ega{IV}{2}{7.6.6, p.211}. Consequently, $\wh X\ot_{\wh A} \wh K$ is also a normal scheme \ega{IV}{2}{6.7.4, p.146} and it is then a simple matter to deduce that $\wh X$ is also irreducible so that all properties imposed on   the morphism $X\to \spc A$ in the beginning of this section are valid for $\wh X\to\spc \wh A$.  

Using the base-change functor $\si:\g T_X\to \g T_{\wh X}$ and the equivalence \[\bullet|_{\wh x_0}:\g T_{\wh X}\aro \rep{\wh A}{\Pi(\wh X,\wh x_0)},\] 
we derive a tensor functor  
\[
\tau:\rep AG\aro  \rep{\wh A}{\Pi(\wh X,\wh x_0)}
\]  
preserving forgetful functors (up to tensor natural isomorphism).   
Now, if $i:\rep A{G}\to \rep{\wh A}{G_{\wh A}}$ stands for the base-extension functor, then  
$\tau$ can be prolonged  to a tensor functor 
\[
\xi:\rep {\wh A}{G_{\wh A}}\aro \rep {\wh A}{\Pi(\wh X,\wh x_0)}
\]  
rendering 
\[
\xymatrix{\ar[d]_{i}\rep AG  \ar[rr]^-{\tau}&& \rep{\wh A}{\Pi(\wh X,\wh x_0)}
\\
\rep{\wh A}{G_{\wh A}}\ar[rru]_{\xi}&&
}
\] 
commutative up to natural isomorphism of tensor functors. In addition, $\xi$ preserves the forgetful functors. We contend that the morphism of group schemes induced by $\xi$ is faithfully flat, and for that we rely on Lemma \ref{21.03.2019--1}, whose notations are from now on in force. 

Let   $V\in \g s(G)$.   Since $\te$ is full, we conclude that $H^0(X,\te(V))\simeq A$. 
Hence, flat base-change gives 
$H^0(\wh X,\si\te(V))\simeq \wh A$.
This implies that 
\[ \tau(V)^{\Pi(\wh X,\wh x_0)}
\simeq \wh A.
\]
Likewise, if $M\in\g s_0(G)$, we conclude that 
\[
\tau(M)^{\Pi(\wh X,\wh x_0)}\simeq k.
\]  
Now, as we learn from 1.4 and Proposition 2 of 1.5  in \cite{serre68},  for any $V\in\g s(G_{\wh A})$, 
respectively $M\in \g s_0(G_{\wh A})$, there exists $V^\flat\in \g s(G)$, respectively 
$M^\flat\in \g s_0(G)$,  and an injection $V\to i(V^\flat)$, respectively an injection  $M\to i(M^\flat)$. 
Then, 
\[
\begin{split} \xi(V)^{\Pi(\wh X,\wh x_0)}&\subset\tau(V^\flat)^{\Pi(\wh X,\wh x_0)}\\&\simeq \wh A, 
\end{split}
\]
and 
\[\begin{split} \xi(M)^{\Pi(\wh X,\wh x_0)}&\subset\tau(M^\flat)^{\Pi(\wh X,\wh x_0)}\\&\simeq k. \end{split}
\]
Consequently, Lemma \ref{21.03.2019--1} guarantees that the morphism \[\Pi(X_{\wh A},\wh x_0)\aro G_{\wh A}
\] deduced from $\xi$ is faithfully flat. 
Now, since $\Pi(X_{\wh A},\wh x_0)$ is strictly pro-finite (Theorem \ref{15.03.2019--1}) it is not hard to see that $G_{\wh A}$ is actually finite, so that $G$ must then be finite \cite[I.3.6, Proposition 11, p.52]{bourbaki-ac}.  
\end{proof}

Using \cite[Tag 0AS7]{stacks}, we have:
\begin{cor}\label{16.04.2019--2}Let us adopt the hypothesis of Theorem \ref{25.03.2019--1}. Then, the ring of functions of $\Pi(X,x_0)$ is a Mittag-Leffler $A$-module. \qed
\end{cor}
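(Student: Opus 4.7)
The plan is to exhibit $A[\Pi(X,x_0)]$ as a filtered colimit of finite free $A$-modules with pure-injective transition maps, and then to invoke \cite[Tag 0AS7]{stacks} directly.

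First I would use Theorem \ref{25.03.2019--1}: the group scheme $\Pi(X,x_0)$ is strictly pro-finite, so it is isomorphic to $\varprojlim_{i\in I}G_i$ for some directed system $\{\nu_{ij}:G_j\to G_i\,:\,i\le j\}$ in $(\bb{FGSch}/A)$ in which each $G_i$ is finite over $A$ and each $\nu_{ij}$ is faithfully flat. Passing to rings of functions therefore gives
\[
A[\Pi(X,x_0)]\;\simeq\;\varinjlim_{i\in I}A[G_i].
\]

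Next I would observe that each $A[G_i]$ is a finite \emph{free} $A$-module: by assumption it is a finite flat $A$-module, and any such module over a discrete valuation ring is free. Moreover, the faithful flatness of $\nu_{ij}:G_j\to G_i$ implies that the dual arrow $A[G_i]\to A[G_j]$ is a pure monomorphism of $A$-modules; in fact it is split as a map of $A[G_i]$-modules by \cite[\S11, Theorem 1(B), p.111]{mumford70} (the same fact that was invoked in the proof of Lemma \ref{29.11.2018--1}), and \emph{a fortiori} it is $A$-universally injective.

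Hence $A[\Pi(X,x_0)]$ is a filtered colimit of finite free $A$-modules along universally injective transition maps, and \cite[Tag 0AS7]{stacks} delivers the Mittag-Leffler property at once. The only point that deserves some care is the verification that the transition maps $A[G_i]\to A[G_j]$ are universally injective (as opposed to merely injective), but this is immediate from the splitting just mentioned.
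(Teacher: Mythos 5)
Your proposal is correct and follows precisely the route the paper leaves implicit: Theorem \ref{25.03.2019--1} provides the strict pro-finite presentation $\Pi(X,x_0)=\varprojlim G_i$, and then one checks that $A[\Pi(X,x_0)]=\varinjlim A[G_i]$ is a filtered colimit of finite free $A$-modules along universally injective transition maps, after which \cite[Tag 0AS7]{stacks} applies. One small remark: Mumford's theorem only gives that $A[G_j]$ is finite projective over $A[G_i]$, not directly that the inclusion splits; to close that gap one should note that faithful flatness makes $A[G_i]\to A[G_j]$ pure over $A[G_i]$, and since the cokernel is finitely presented (everything is finite over the Noetherian ring $A$) a pure exact sequence with finitely presented cokernel is split — but your conclusion, that the map is $A$-universally injective, is of course correct and is exactly what is needed.
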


\section{An application to the theory of torsion points on the Picard scheme}\label{20.03.2019--1}We assume that $A$ is Henselian and Japanese. 
 Let $X$ be an irreducible, proper and flat $A$-scheme with geometrically reduced fibres, and $x_0$ an $A$-point of $X$. Assume in addition that $X_K$ is normal. 
The following result connects Theorem \ref{25.06.2018--4} with the theory of torsion points on abelian schemes.

\begin{prp}\label{11.01.2019--1}Suppose that $A$  has characteristic $(0,p)$ and absolute ramification index  $e$. We give ourselves a positive integer $r$ and an invertible sheaf $\cl$ on $X$. 
\begin{enumerate}\item If $\cl\in\mm{Pic}(X)$ has order $p^r$ and is taken upon reduction to the identity of $\mm{Pic}(X_k)$, then $p^r-p^{r-1}\le e$. 
\item If $Y$ is an abelian scheme over $A$ and  $y\in Y(A)$ is a point of order $p^r$ which reduces to the identity in $Y(k)$, then $p^{r}-p^{r-1}\le e$. 
\end{enumerate}
\end{prp}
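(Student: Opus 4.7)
I would first reduce part (2) to part (1) via the Poincar\'e bundle. If $y \in Y(A)$ has order $p^r$ with $y_k = 0$, then $\cl := (y \times \id_{Y^\vee})^*\cp$ on $Y^\vee$, where $\cp$ is the normalised Poincar\'e bundle on $Y \times Y^\vee$, is a line bundle with $\cl^{\otimes p^r}\simeq\co_{Y^\vee}$ (bi-multiplicativity of $\cp$ and $p^r y = 0$), of exact order $p^r$, and with $\cl_k$ trivial (rigidification of $\cp$ along $\{0\}\times Y^\vee$ together with $y_k = 0$). Part (1) applied to $Y^\vee$, which is an irreducible proper flat abelian $A$-scheme with geometrically reduced and normal fibres, and a canonical $A$-point (the identity), gives the bound.

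For part (1), after choosing an isomorphism $\cl^{\otimes p^r}\simeq\co_X$ I form the Kummer $\mu_{p^r}$-torsor
\[
P \;=\; \underline{\mathrm{Spec}}_{\co_X}\Big(\bigoplus_{i=0}^{p^r-1}\cl^{-i}\Big)\aro X,
\]
which trivialises $\cl$ after pullback. Although $P$ may fail the $H^0$-flatness requirement of $\g S(X,x_0)$ (since $\cl_k$ trivial causes $H^0(X_k,\cl_k^{\otimes i})$ to jump while $H^0(X,\cl^{\otimes i})=0$), Theorem \ref{05.11.2018--1} furnishes a $\ps:Z\to X$ in $\g S^+(X,x_0)$ dominating $P$, placing $\cl$ in $\g T_X^\circ$. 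The Tannakian Galois group $G:=\mathrm{Gal}'(\cl,\g T_X,x_0)$ embeds into $\mathrm{GL}(\cl|_{x_0})=\GG_{m,A}$ via the character describing its action, and since $\cl^{\otimes p^r}\simeq\co_X$ one has $G\hookrightarrow\mu_{p^r,A}$. Exact order $p^r$ of $\cl_K$ gives $G_K=\mu_{p^r,K}$, and Theorem \ref{25.06.2018--4} together with the closed immersion $G\hookrightarrow\mu_{p^r,A}$ then forces $G=\mu_{p^r,A}$.

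The arithmetic conclusion is then extracted from the restriction $P_0 := P\times_X\spc A = \spc\big(A[t]/(t^{p^r}-a)\big)$ for some $a\in A^\times$. Triviality of $\cl_k$ yields a section of $P_k\to X_k$ whose restriction to $x_{0,k}$ forces $\bar a\in(k^\times)^{p^r}$, so after rescaling the generator of $\cl|_{x_0}$ we may assume $a\in 1+\pi A$. The fact that $\cl$ has exact order $p^r$ -- coupled with the identification of $G$ as the full $\mu_{p^r,A}$ and the faithfulness of $\bullet|_{x_0}$ (Corollary \ref{19.07.2018--2}) -- is then used to show that the class $[a]$ is non-trivial of order $p^r$ inside the Kummer group $(1+\pi A)/(1+\pi A)^{p^r}$ of the formal multiplicative group $\wh\GG_m(A)$. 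A standard $p$-adic analysis of this group (essentially the fact that non-trivial $p^r$-torsion of $\wh\GG_m(A)$ exists exactly when $A$ contains a primitive $p^r$-th root of unity congruent to $1\bmod\pi$) then forces $e\ge p^r-p^{r-1}$.

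\textbf{Main obstacle.} The delicate passage is the last one: translating the global invariant ``$\cl$ is of exact order $p^r$ in $\mathrm{Pic}(X)$ with $\cl_k$ trivial'' into a non-trivial arithmetic invariant of $a\in 1+\pi A$ inside $(1+\pi A)/(1+\pi A)^{p^r}$. A priori $a$ depends on the chosen trivialisations of $\cl^{\otimes p^r}$ and of $\cl|_{x_0}$, and in an abstract proper $X$ the evaluation map $\mathrm{Pic}(X)[p^r]\to A^\times/(A^\times)^{p^r}$ can have kernel. Bridging this gap is precisely where the combination of $G = \mu_{p^r,A}$ (so that the Tannakian picture detects non-triviality through $\cl|_{x_0}$) and the global triviality of $\cl_k$ (which rules out geometric contributions on the special fibre and leaves only arithmetic ones) is essential; this is the technically most intricate step of the argument.
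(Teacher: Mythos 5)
The plan for part (1) hinges on the claim that the character $\la : G\to\GG_{m,A}$ determined by $\cl|_{x_0}$ is a closed immersion, so that $G\hookrightarrow\mu_{p^r,A}$ with isomorphic generic fibres, forcing $G=\mu_{p^r,A}$. This step is wrong, and the mistake sits exactly where the arithmetic content of the proposition must enter. Since $\cl_k$ is the identity of $\mm{Pic}(X_k)$ and $\g T_X$ is a \emph{full} subcategory of $\bb{coh}(X)$, the isomorphism $\cl_k\cong\co_{X_k}$ is an isomorphism in $\g T_X$, so the $G$-representation $E_k=(\cl|_{x_0})\ot k$ is the \emph{trivial} one and $\la_k:G_k\to\GG_{m,k}$ is the constant morphism. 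If $\la$ were a closed immersion then so would be $\la_k$, forcing $G_k$ to be trivial—impossible for a finite flat $G$ with $G_K\cong\mu_{p^r,K}$; alternatively, if $G$ equalled $\mu_{p^r,A}$ then by flatness $\la$ would be the standard inclusion, so $\la_k$ would be non-trivial, again a contradiction. Over a d.v.r., "$E$ tensor-generates $\rep_AG$" does \emph{not} imply "$G\to\GL(E)$ is a closed immersion"; the paper itself only invokes the existence of \emph{some} faithful representation in Theorem \ref{25.06.2018--1}, not that the given generator is faithful.

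What the paper does instead is exploit the triviality of $\la_k$. By the universal property of N\'eron blowups, $\la$ factors through $\tilde\mu_{p^r}\to\mu_{p^r}$, the blowup of $\mu_{p^r}$ at the identity of its special fibre. As this factorisation is an isomorphism on generic fibres, $A[\tilde\mu_{p^r}]\hookrightarrow A[G]$; finiteness of $G$ (Theorem \ref{25.03.2019--1}) then forces $\tilde\mu_{p^r}$ to be finite. Lemma \ref{15.01.2019--2} computes the finiteness criterion $p^r\le er+\min_{0\le i<r}\{p^i-ie\}$ explicitly, and the proposition follows by a short induction on $r$ (the case $r=1$ yields $p-1\le e$, and the inductive hypothesis identifies the minimum). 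Note that the group $G$ itself is never identified.

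Independently of the above, the final Kummer-theoretic extraction is not sound: having an element of exact order $p^r$ in $(1+\pi A)/(1+\pi A)^{p^r}$ carries no constraint on $e$ (for $p$ odd, $1+p\in\ZZ_p$ already has order $p^r$ in that quotient with $e=1$). The condition that does work—$A$ containing a primitive $p^r$-th root of unity in $1+\pi A$—is precisely what finiteness of $\tilde\mu_{p^r}$ encodes, but your argument does not produce it, because the passage from "$\cl$ of exact order $p^r$" to "$a$ of exact order $p^r$ in the Kummer quotient" is unjustified: the restriction map $\mm{Pic}(X)[p^r]\to A^\ti/(A^\ti)^{p^r}$ can have a large kernel. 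The reduction of (2) to (1) via the Poincar\'e bundle, on the other hand, is essentially what the paper does.
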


The proof requires:

\begin{lem}\label{15.01.2019--2}Let $A$ have characteristic $(0,p)$ and absolute ramification index $e$. Let $r$ be a positive integer. Then, the Neron blowup \cite[Section 1]{waterhouse-weisfeiler80} $\tilde\mu_{p^r}$ of $\mu_{p^r}$ at the origin in the special fibre is finite if and only if 
\[
p^r\le er+\min_{0\le i<r}\{p^i-ie\}.
\]
\end{lem}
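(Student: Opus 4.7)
\noindent\textit{Plan of proof.} The idea is to compute the Neron blowup of $\mu_{p^r}$ at the identity of the special fibre in explicit coordinates and to read off the $A$-finiteness condition from the Newton polygon of the resulting defining polynomial.

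First, translate the identity section to the origin: writing $\mu_{p^r}=\mathrm{Spec}\,A[T]/(T^{p^r}-1)$ and setting $U=T-1$, one has $\mu_{p^r}=\mathrm{Spec}\,A[U]/(f)$ with $f(U)=(1+U)^{p^r}-1$, and the identity in the special fibre corresponds to $U=0$. Following the affine description in \cite[Section 1]{waterhouse-weisfeiler80}, the Neron blowup at this point is obtained by introducing $U_1=U/\pi$ and taking the $\pi$-torsion-free quotient of $A[U_1]/(f(\pi U_1))$, where
$$f(\pi U_1)=(1+\pi U_1)^{p^r}-1=\sum_{i=1}^{p^r}\binom{p^r}{i}\pi^i\, U_1^i.$$

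Next, using the standard identity $v_p\binom{p^r}{i}=r-v_p(i)$ for $1\le i\le p^r$ (which follows from $\binom{p^r}{i}=\tfrac{p^r}{i}\binom{p^r-1}{i-1}$ together with the fact that $\binom{p^r-1}{i-1}$ is a $p$-adic unit by Lucas's theorem), the $\pi$-adic valuation of the coefficient of $U_1^i$ equals $e(r-v_p(i))+i$. Setting
$$m=\min_{1\le i\le p^r}\bigl(e(r-v_p(i))+i\bigr)\qquad\text{and}\qquad \tilde f(U_1):=\pi^{-m}f(\pi U_1)\in A[U_1],$$
the Neron blowup can be identified (after killing $\pi$-torsion) with $\mathrm{Spec}\,A[U_1]/(\tilde f)$. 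The coefficient of $U_1^{p^r}$ in $\tilde f$ is $\pi^{p^r-m}$, so $A[U_1]/(\tilde f)$ is $A$-finite (in fact $A$-free of rank $p^r$) precisely when this leading coefficient is a unit, i.e.\ when $m=p^r$; if $m<p^r$ then $U_1^{p^r}$ cannot be expressed as an $A$-linear combination of lower powers, so the blowup fails to be $A$-finite.

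Finally, among the $i$ with $v_p(i)=k$ fixed, $e(r-k)+i$ is minimized at $i=p^k$, giving the value $e(r-k)+p^k$. Hence $m=\min_{0\le k\le r}(e(r-k)+p^k)$, and the equality $m=p^r$ (the minimum being attained at $k=r$) is equivalent to $p^r\le e(r-k)+p^k$ for every $0\le k<r$, which rearranges to $p^r\le er+\min_{0\le k<r}(p^k-ke)$, precisely the inequality in the statement. The main point requiring care is the identification of the $\pi$-torsion-free quotient with $A[U_1]/(\tilde f)$ together with the equivalence between $A$-finiteness of this quotient and unit-monicity of $\tilde f$; once these are established, the rest is a direct combinatorial valuation computation.
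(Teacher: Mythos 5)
Your proof is correct and follows the same strategy as the paper: translate the identity to the origin via $T\mapsto 1+U$, compute $v_p\binom{p^r}{i}=r-v_p(i)$, substitute $U=\pi U_1$, and compare the $\pi$-adic valuations of the coefficients. In fact your write-up is somewhat more complete than the paper's, which stops after computing $\min_{1\le n<p^r}\alpha(n)=\min_{0\le i<r}(er+p^i-ie)$ and leaves implicit both the identification of the $\pi$-torsion-free quotient of $A[U_1]/(f(\pi U_1))$ with $A[U_1]/(\tilde f)$ (which relies on $\tilde f$ being primitive, so that $\gcd(\pi,\tilde f)=1$ in the UFD $A[U_1]$) and the fact that $A[U_1]/(\tilde f)$ is $A$-finite precisely when the leading coefficient is a unit (otherwise torsion-freeness would force $A$-freeness of constant rank, contradicting the drop in rank mod $\pi$); you correctly flag and address both of these points.
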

\begin{proof}Let us write $\mu_{p^r}=\spc{A[t]/(t^{p^r}-1)}$. Putting $t=1+s$, we have $\mu_{p^r}=\spc A[s]/(\ph)$, where $\ph(s)=s^{p^r}+\sum_{n=1}^{p^r-1}\binom{p^r}{n}s^n$. Now, if $\mm{ord}_p:\ZZ\setminus\{0\}\to\NN$ denotes the $p$-adic valuation, then \[\mm{ord}_p\binom{p^r}{n}=r-\mm{ord}_p(n).
\]
Consequently, writing $\tilde s=\pi^{-1}s$, we obtain  
\[\begin{split}
\ph&=\pi^{p^r}\tilde s^{p^r}+\sum_{n=1}^{p^r-1} \pi^{n+er-e\po\mm{ord}_p(n)}u_n\tilde s^n
\\
&=\pi^{p^r}\tilde s^{p^r} +\sum_{n=1}^{p^r-1}\pi^{\al(n)}u_n\tilde s^n, 
\end{split}
\]
with $u_n\in A^\ti$ and  $\al(n):=er+n-e\po\mm{ord}_p(n)$. 
Now, if $\mm{ord}_p(n)=i$,  then $\al(n)=er+n-ei$ so that $\al(p^i)\le\al(n)$ in this case. Consequently,   
\[
\min_{1\le n<p^r} er+n-e\po\mm{ord}_p(n)=\min_{0\le i<r}er + p^i-ie.
\]
\end{proof}

\begin{proof}[Proof of Proposition \ref{11.01.2019--1}]Let us deal first with the case $r=1$. Using the fact that $\cl$ becomes trivial on the $\mu_p$-torsor associated to it and Theorem \ref{05.11.2018--1}, we see that $\cl\in\g T_X$. 
 Let $G=\mm{Gal}'(\cl)$ and let $\la:G\to\GG_m$ be the associated representation. Then, $\la$ factors through $\mu_{p}\subset\GG_m$ and  $\la\ot K$ induces an isomorphism $G\ot K\to\mu_{p}\ot K$. Since $\la_k$ is trivial, the morphism $\la$ factors through an arrow $\tilde \la:G\to\tilde\mu_{p}$, where  $\tilde\mu_{p}$ is the Neron blowup of $\mu_{p}$ at the identity of the special fibre. Now, because     $G\to \tilde\mu_{p}$ induces an isomorphism between generic fibres and a fortiori an injection among rings of functions, finiteness of $G$ (by    Theorem \ref{25.03.2019--1}) implies that of $\tilde\mu_{p}$ and Lemma \ref{15.01.2019--2} finishes the proof. 

Let us now assume that $r\ge2$ and that the claim is true for all $i<r$. Since $\cl^{\ot p^{r-i}}$ has order $p^{i}$, we conclude that 
\begin{equation}\label{15.01.2019--1}
p^{i}\le e+p^{i-1}
\end{equation} 
for $i\in\{1,\ldots,r-1\}$. Using  the fact that $\cl$ becomes trivial on the $\mu_{p^r}$-torsor associated to it and Theorem \ref{05.11.2018--1}, we see that $\cl\in\g T_X$.  Let $G=\mm{Gal}'(\cl)$ and let $\la:G\to\GG_m$ be the associated representation. 
Just as for the particular case, we conclude that the blowup of $\mu_{p^r}$ at the identity of the special fibre is finite. Hence, by Lemma \ref{15.01.2019--2},
\[
p^r\le er+\min_{0\le i<r}\{p^i-ei\}.
\] 
But using \eqref{15.01.2019--1}, we see that 
\[1\ge p-e\ge p^2-2e\ge\cdots\ge p^{r-1}-(r-1)\po e.\]
Consequently, 
$\displaystyle\min_{0\le i<r}\{p^i-ei\}=p^{r-1}-(r-1)\po e$, so that 
\[p^r\le er+p^{r-1}-(r-1)\po e,\]
and this is what we wanted.

(2) One takes $X$ to be an abelian scheme such that $Y$ is its dual and applies part (1).
\end{proof}

\begin{rmk}Corollary \ref{11.01.2019--1}(2) can be deduced from ``Cassels'  Theorem'' on formal groups. The one dimensional case is classical \cite[IV.6.1, p.123]{silverman86}, while the higher dimensional can be found in \cite[p.966]{grant13}. 
\end{rmk}

\section{Application to the theory of  torsors}\label{15.03.2019--2} 
We suppose that $A$ is Henselian and Japanese. Let $X$ be an irreducible, proper and flat $A$-scheme with geometrically reduced fibres, and $x_0$ an $A$-point of $X$. 

\begin{thm}\label{14.01.2019--1} Let $G\in(\bb{FGSch}/A)$ be  finite  and $\ph:Q\to X$ be a   $G$-torsor. Then, there exists a $\ps:Z\to X$ in $\g S^+(X,x_0)$ such that $\te_Q:\rep AG\to\bb{coh}(X)$ takes values in $\g T_\ps$. 
\end{thm}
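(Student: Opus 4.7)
The plan is as follows. The central observation, which follows directly from the construction of sheaves twisted from a torsor (see \cite[Part I, 5.8--5.9]{jantzen87}), is that for every representation $M \in \rep A G$ there is a canonical isomorphism of $\co_Q$-modules
\[
\ph^* \te_Q(M) \;\simeq\; \co_Q \ot_A M.
\]
In particular, $\te_Q(M)$ becomes trivial relative to $A$ after being pulled back along $\ph$, so that $\te_Q$ already takes values in $\g T_\ph$. The proof would be complete at this point if $\ph$ itself were an object of $\g S^+(X,x_0)$.

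The obstacle is precisely that $\ph$ need not lie in $\g S^+(X,x_0)$: since $G$ is finite and flat over $A$ and $Q\to X$ is a $G$-torsor, the morphism $\ph$ is finite, faithfully flat, and in particular proper and surjective, but we have no a priori control on $H^0(\co_Q)$, on flatness of $Q$ over $\spc H^0(\co_Q)$, or on the existence of a section of the sort required by $\g S3$. To surmount this I would invoke Theorem \ref{05.11.2018--1}, which takes any proper and surjective morphism over $X$ and produces a further proper surjective $\ps:Z\to X$ factoring through it and lying in $\g S^+(X,x_0)$. Applied to $\ph:Q\to X$, it yields a morphism $\pi:Z\to Q$ together with $\ps=\ph\circ\pi$ in $\g S^+(X,x_0)$.

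It then remains to transport the triviality on $Q$ to $Z$: for every $M\in\rep A G$ we have
\[
\ps^* \te_Q(M) \;\simeq\; \pi^*\ph^*\te_Q(M) \;\simeq\; \pi^*(\co_Q\ot_A M) \;\simeq\; \co_Z\ot_A M,
\]
so $\te_Q(M)\in\g T_\ps$, as required. The only step in the argument that merits genuine verification is the initial canonical identification $\ph^*\te_Q(M)\simeq \co_Q\ot_A M$, which is a well-known feature of twisting by torsors and which we are free to assume by \cite[Part I, 5.8--5.9]{jantzen87}; once this is granted, the conclusion is essentially a repackaging of the already-established Theorem \ref{05.11.2018--1}, so no serious difficulty is expected.
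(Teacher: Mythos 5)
Your proposal is correct and follows exactly the same route as the paper: observe the canonical isomorphism $\ph^*\te_Q(M)\simeq\co_Q\ot_A M$, apply Theorem \ref{05.11.2018--1} to $\ph:Q\to X$ to produce $\ps:Z\to X$ in $\g S^+(X,x_0)$ factoring through $Q$, and conclude by pulling the trivialization back to $Z$.
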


\begin{proof}According to  Theorem \ref{05.11.2018--1}, there exists $\ps:Z\to X$ in $\g S^+(X,x_0)$ and a commutative diagram 
\[
\xymatrix{Z\ar[r]\ar[dr]_\ps&Q\ar[d]^\ph\\&X.}
\]
Hence, for each $M\in\rep AG$ we conclude that $\ps^*(\te_Q(M))$ is trivial relatively to $A$ since $\ph^*\te_Q(M)\simeq\co_Q\ot_AM$ is trivial relatively to $A$. 

\end{proof}

In \cite[87ff]{nori82}, Nori defines the notion of ``reduced torsor'' in order to understand which group schemes do come as quotients of his fundamental group. We follow the same idea here, but instead of starting off with Nori's definition, we prefer to use an equivalent characterization \cite[Proposition 3, p.87]{nori82}. 

Before reading the definition to come, the reader might profit to recall that, for any $G\in(\bb{FGSch}/A)$ and any $G$-torsor $\ph:Q\to X$ above $X$, the functor $\te_Q:\rep AG\to \bb{coh}(X)$ is exact and faithful since $\ph^*\te_Q$ is naturally isomorphic to $M\mapsto \co_Q\ot_AM$ (see for example the proof of (a) in \cite[Part 1, Proposition 5.9]{jantzen87}).

\begin{dfn}\label{05.07.2018--1}Let $G\in(\bb{FGSch}/A)$ be  finite  and $Q\to X$ be a   $G$-torsor having an $A$-point $q_0$ above $x_0$. We say that the data $(Q,G,q_0)$ defines a Nori-reduced torsor if $\te_Q:{\rm Rep}_A(G)\to\bb{coh}(X)$ is a fully faithful functor.  
\end{dfn}

\begin{prp}[{compare to \cite[Theorem 7.1]{mehta-subramanian13}}]\label{29.11.2018--2}Let $G\in(\bb{FGSch}/A)$ be  finite  and $Q\to X$ be a   $G$-torsor having an $A$-point $q_0$ above $x_0$.  Then, the following conditions are equivalent: 
\begin{enumerate}[(i)]
\item[(i)] The triple $(Q,G,q_0)$ is Nori-reduced. 
\item[(ii)] The ring of global functions of $Q_k$ is $k$. 
\item[($ii'$)] The triple $(Q_k,G_k,q_{0,k})$ is reduced in the sense of \cite[Definition 3, p.87]{nori82}. 

\item[(iii)] The $A$-scheme $Q$ is $H^0$-flat and $A=H^0(Q,\co_Q)$. 
\end{enumerate}
\end{prp}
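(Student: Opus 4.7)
The strategy is to establish the cycle (iii)$\Rightarrow$(i)$\Rightarrow$(ii$'$)$\Leftrightarrow$(ii)$\Rightarrow$(iii). Throughout I shall use without further comment that $\ph:Q\to X$ is finite and faithfully flat (being a torsor under the finite flat $G$), so $Q\to\spc A$ is proper and flat and $\ph_*\co_Q$ is locally free over $\co_X$; the hypotheses on $X$ moreover force $H^0(X,\co_X)=A$ via Lemmas \ref{03.07.2018--1} and \ref{03.07.2018--2}. The equivalence (ii)$\Leftrightarrow$(ii$'$) follows from Nori's original characterization [Proposition 3, p.~87 of nori82], combined with the $\mm{Hom}$-calculation below specialized to the base field $k$: Nori's result says $(Q_k,G_k,q_{0,k})$ is reduced iff $\te_{Q_k}$ is fully faithful, and the calculation (base flatness being automatic over a field) translates full faithfulness of $\te_{Q_k}$ into $H^0(Q_k,\co_{Q_k})=k$.

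For (iii)$\Rightarrow$(ii) and (iii)$\Rightarrow$(i): the former is immediate since $H^0$-flatness and $H^0(Q,\co_Q)=A$ give $H^0(Q_k,\co_{Q_k})=H^0(Q,\co_Q)\ot_A k=k$. For the latter, fpqc descent along $\ph$ together with the adjunction between pullback from $\spc A$ and global sections yield, for all $M,N\in\rep{A}{G}$,
\[
\mm{Hom}_{\co_X}(\te_Q M,\te_Q N)\;=\;\mm{Hom}^G_{\co_Q}(\co_Q\ot_A M,\co_Q\ot_A N)\;=\;\mm{Hom}_A\bigl(M,\,H^0(Q,\co_Q\ot_A N)\bigr)^G,
\]
where $(-)^G$ denotes invariants under the diagonal action, and where one observes that $G$ acts trivially on $H^0(Q,\co_Q)=A$ (since $H^0(Q,\co_Q)^G=H^0(X,\co_X)=A$ by taking invariants in $\ph_*\co_Q$). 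The $H^0$-flatness hypothesis then converts $H^0(Q,\co_Q\ot_A N)$ into $A\ot_A N=N$ as a $G$-module, so the right-hand side is $\mm{Hom}_A(M,N)^G=\mm{Hom}_G(M,N)$; thus $\te_Q$ is fully faithful.

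For (i)$\Rightarrow$(ii$'$): the full subcategory of $\rep{A}{G}$ consisting of representations annihilated by $\pi$ is canonically equivalent to $\rep{k}{G_k}$, and under this identification $\te_Q$ restricts to $\te_{Q_k}$ (both target $\pi$-torsion $\co_X$-modules, which are the same as $\co_{X_k}$-modules, and the corresponding $\mm{Hom}$-groups agree). Therefore full faithfulness of $\te_Q$ restricts to full faithfulness of $\te_{Q_k}$, whence (ii$'$) by Nori's Proposition~3.

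For (ii)$\Rightarrow$(iii), the sole substantive step: upper semi-continuity of $s\mapsto\dim_{k(s)}H^0(Q_s,\co_{Q_s})$ on $\spc A$ (Hartshorne III.12.8), together with hypothesis (ii), gives $\dim_K H^0(Q_K,\co_{Q_K})\le 1$; the reverse inequality follows from the constants, so this function is identically~$1$. Grauert's theorem (Hartshorne III.12.11) then implies that the coherent sheaf on $\spc A$ associated to $H^0(Q,\co_Q)$ is locally free of rank~$1$ and its formation commutes with arbitrary base change---which is precisely $H^0$-flatness of $Q$ over $A$. The base-change map $H^0(Q,\co_Q)\ot_A k\to H^0(Q_k,\co_{Q_k})=k$ is then a surjection between two one-dimensional $k$-algebras, hence an isomorphism; so $A\to H^0(Q,\co_Q)$ is surjective modulo $\pi$, Nakayama upgrades this to global surjectivity, and a surjection between free $A$-modules of rank~$1$ is an isomorphism, giving $A=H^0(Q,\co_Q)$. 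The only genuine obstacle in the whole proof is this Grauert invocation; the remaining implications are formal manipulations of torsor descent and the $\pi^*\dashv\pi_*$ adjunction.
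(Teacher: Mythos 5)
Your proof is correct, and it takes a genuinely different route in its most substantive leg. The paper establishes the cycle (i)$\Rightarrow$(ii)$\Leftrightarrow$(ii$'$), (ii)$\Rightarrow$(iii)$\Rightarrow$(i), and the proof of (iii)$\Rightarrow$(i) goes through the Tannakian machinery built in Sections 3--4: one views $\ph:Q\to X$ as an element of $\g S^+(X,x_0)$, produces a morphism $\rho:\Pi(X,\ph,x_0)\to G$ from the torsor and its base point, and then shows that $\rho_k$ and $\rho_K$ are faithfully flat via Lemma~\ref{29.11.2018--1} applied to $k[G]_{\rm left}$ and $K[G]_{\rm left}$, upgrading to faithful flatness of $\rho$ by \cite[4.1.1]{duong-hai18} and concluding by fullness of $\rho^\#$. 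You instead bypass $\Pi(X,\ph,x_0)$ entirely: you compute $\mm{Hom}_{\co_X}(\te_Q M,\te_Q N)$ directly by fpqc descent along $Q\to X$ and the $p^*\dashv p_*$ adjunction for $p:Q\to\spc A$, reducing full faithfulness to the identity $H^0(Q,\co_Q\ot_A N)\simeq N$, which is precisely what $H^0$-flatness plus $H^0(\co_Q)=A$ supply. This is more self-contained (no reliance on Theorem~\ref{06.04.2018--1} or the faithful flatness criteria of \cite{duong-hai18}) and arguably more transparent about what (iii) is doing; what you lose is the explicit identification of $G$ as a faithfully flat quotient of $\Pi(X,\ph,x_0)$, which the paper's proof yields as a by-product and which is used structurally elsewhere in the paper. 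Your (i)$\Rightarrow$(ii$'$) via restriction to the $\pi$-torsion subcategory is also a mild variant: the paper proves (i)$\Rightarrow$(ii) directly by applying fullness to the object $\te_Q(k[G]_{\rm left})\simeq\co_{Q_k}$ and then uses Nori's Proposition~3 purely as a black box for (ii)$\Leftrightarrow$(ii$'$). For (ii)$\Rightarrow$(iii) both proofs invoke the cohomology-and-base-change package from \cite[III.12]{hartshorne77}, the paper citing Proposition III.12.10 and you Grauert's criterion; the argument is essentially the same.

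One small caution on your descent computation: the identity
\[
\mm{Hom}^G_{\co_Q}(\co_Q\ot_A M,\co_Q\ot_A N)\;=\;\mm{Hom}_G\bigl(M,H^0(Q,\co_Q\ot_A N)\bigr)
\]
is best phrased as passing from $\co_Q$-linear $G$-equivariant maps to $G$-comodule maps of $A$-modules via the natural $G$-equivariant adjunction for $p:Q\to\spc A$; stating this carefully (rather than as a bare $(-)^G$ of a Hom) avoids any ambiguity about what ``diagonal action'' means on the Hom set. As written the argument is correct but a referee would ask you to spell this out.
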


\begin{proof}The proof relies on the fact that $A[G]_{\rm left}$ is an algebra in the category $\rep AG$, and that the corresponding $\co_X$-algebra $\te_Q(A[G]_{\rm left})$ is simply $\co_Q$. 

$(i) \Rightarrow (ii)$. 
Because the functor $\te_Q$ is fully faithful and $\te_Q(k[G]_{\rm left})\simeq\co_{Q}\ot k$, we conclude that $k=H^0(X,\co_{Q}\ot k)$. But $H^0(X,\co_{Q}\ot k)=H^0(Q,\co_{Q}\ot k)$, and hence the only regular functions on the $k$-scheme $Q_k$ are the constants. 

$(ii)\Leftrightarrow(ii')$. This is  \cite[II, Proposition 3]{nori82}.

$(ii)\Rightarrow(iii)$. Recall that $Q$ is flat over $X$ and a fortiori over $A$. Now, because $k=H^0(Q,\co_Q\ot_A k)$, we can employ Proposition  12.10 of Chapter III in \cite{hartshorne77} to conclude that  $Q$ is  cohomologically flat of degree zero over $A$. 
Since $H^0(\co_Q)$ is a finite and flat $A$-module, the isomorphism $k\simeq k\ot_A H^0(\co_Q)$ proves that $A\simeq H^0(\co_Q)$. 

$(iii)\Rightarrow (i)$. 
Let $\ph$ denote the structural morphism $Q\to X$; by assumption it belongs to $\g S^+(X,x_0)$.   If we agree to write $H:=\Pi(X,\ph,x_0)$ (see Definition \ref{14.01.2019--2}), the existence of the point $q_0$ gives an isomorphism between $(\bullet|_{x_0})\circ\te_Q$ and the forgetful functor $\rep AG\to\modules A$
and hence  a morphism of   group schemes 
\[
\rho:H\aro G
\] together with a commutative diagram 
\begin{equation}\label{28.11.2018--2}
\xymatrix{
\rep AG\ar[rr]^{\te_Q} \ar[d]_{\rho^\#}&&\ar[dll]_{\sim}^{\bullet|_{x_0}}  \g T_\ph\\
\rep A H .&&}
\end{equation}
Now, \[\begin{split}k&= H^0(X,\co_{Q_k})\\
&\simeq H^0(X,\te_Q(k[G]_{\rm left}))\\&\simeq   \left(   k[G]_{\rm left}         \right)^{H}\\&\simeq (k[G]_{\rm left})^{H_k}.
\end{split}
\]
According to Lemma \ref{29.11.2018--1}, this is only possible when $\rho_k:H_k\to G_k$ is faithfully flat. Analogously,  we have that $(A[G]_{\rm left})^{H}\simeq A$. This implies that $(K[G]_{\rm left})^{H_K}\simeq K$, and hence  $\rho_K$ is faithfully flat according to Lemma \ref{29.11.2018--1}.  In conclusion, $\rho_k$ and $\rho_K$ are faithfully flat, and hence   $\rho$ must be faithfully flat  \cite[4.1.1, p. 1124]{duong-hai18}. Together with \cite[3.2.1(ii), p. 1121]{duong-hai18}, we conclude that $\rho^\#$ is fully faithful, so that diagram \eqref{28.11.2018--2} secures fully faithfulness of  $\te_Q$.
\end{proof}
 
We shall now keep the notation and assumptions of Proposition  \ref{29.11.2018--2} and  offer other properties equivalent to the ones in its statement. This will allow us, in passing, to  render the connection with \cite[Theorem 7.1]{mehta-subramanian13} and to \cite[Definition 3, p.87]{nori82} more transparent. First we develop some preliminaries.

\label{29.03.2019--5}Let us abbreviate $\Pi=\Pi(X,x_0)$. Similarly to \cite{nori} (see \S2 and the argument on p.39), there exists a $\Pi$-torsor 
\[
\wt X\aro X
\] 
with an $A$-point $\tilde x_0$ above $x_0$ such that $\te_{\wt X}\circ(\bullet|_{x_0})\simeq{\rm id}$ and $(\bullet|_{x_0})\circ\te_{\wt X}\simeq{\rm id}$ as tensor functors. The quasi-coherent $\co_X$-algebra of $\wt X$ is a direct limits of coherent modules belonging to $\g T_X$ and corresponds, in $\mm{Ind}\,\rep A{\Pi}$, to $A[\Pi]_{\rm left}$, see \cite[Definition, p. 32]{nori}. The  torsor $\wt X$ is called the {\it universal pointed torsor}.

Let  $\rho:G'\to G$ be an arrow of $(\bb{FGSch}/A)$.  We say that $Q$ has a {\it reduction of structure group to $\rho$, or to $G'$},  if there exists a $G'$-torsor $Q'\to X$ together with an isomorphism  $Q'\ti^{\rho}G\to Q$. In addition, if $Q'$ can be picked to come with an $A$-point $q_0'$ such that $(q_0',e)$ corresponds to $q_0$ under the aforementioned isomorphism,  then the reduction is called {\it pointed}.  Note that,  we  do not assume $\rho$ to be a closed embedding. 

\begin{cor}\label{27.03.2019--1} The equivalent properties appearing in Proposition \ref{29.11.2018--2} are also equivalent to each one of the following conditions: 
\begin{enumerate}[(a)]\item  If $Q$ has a pointed reduction to $\rho:G'\to G$, then $\rho$ is faithfully flat. 
\item There exists a faithfully flat morphism $\rho:\Pi\to G$ and 
an isomorphism of pointed $G$-torsors $\wt X\ti^\rho G\stackrel\sim\to Q$. (That is, $\wt X$ defines a pointed reduction of $Q$.)
\end{enumerate}
\end{cor}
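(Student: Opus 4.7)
My plan is to extend the chain of equivalences in Proposition \ref{29.11.2018--2} by proving $(i) \Leftrightarrow (b)$ Tannakianly and $(a) \Leftrightarrow (b)$ via the universal torsor $\wt X$ together with a fibrewise cohomological dimension count. The common ingredient is the canonical morphism $\rho : \Pi \to G$ attached to the pointed $G$-torsor $(Q, q_0)$: by Theorem \ref{14.01.2019--1}, $\te_Q : \rep{A}{G} \to \bb{coh}(X)$ factors through $\g T_X$, and composing with the fibre-functor equivalence $\bullet|_{x_0} : \g T_X \arou{\sim} \rep{A}{\Pi}$ yields a tensor functor preserving forgetful functors; Tannakian duality then provides the unique $\rho$ with $\te_Q \simeq \te_{\wt X} \circ \rho^\#$. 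Matching coordinate algebras ($\te_{\wt X}(\rho^\# A[G]_{\rm left}) \simeq \co_Q$) gives $\wt X \ti^\rho G \simeq Q$ as pointed $G$-torsors unconditionally. Then $(i) \Leftrightarrow (b)$ follows from the chain ``$\te_Q$ fully faithful $\Leftrightarrow$ $\rho^\#$ fully faithful $\Leftrightarrow$ $\rho$ faithfully flat'', by Proposition 3.2.1(ii) of \cite{duong-hai18}.

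For $(a) \Rightarrow (b)$, let $G^{\rm im} \subset G$ be the scheme-theoretic image of $\rho$, a closed subgroup. Since $A[\Pi]$ is $A$-flat as a filtered colimit of the flat coordinate rings $A[\mm{Gal}'(\ce, \g T_X, x_0)]$ ($\ce \in \g T_X^\circ$), its subring $A[G^{\rm im}]$ is torsion-free and hence $A$-flat, placing $G^{\rm im}$ in $(\bb{FGSch}/A)$. Setting $Q^{\rm im} := \wt X \ti^{\Pi \to G^{\rm im}} G^{\rm im}$ yields a pointed reduction of $Q$ through $\iota : G^{\rm im} \hookrightarrow G$; by $(a)$, $\iota$ is faithfully flat and hence an isomorphism, so $G^{\rm im} = G$ and $\rho$ is faithfully flat.

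For $(b) \Rightarrow (a)$, given a pointed reduction $Q \simeq Q' \ti^{\rho'} G$ with $\rho' : G' \to G$, I would verify faithful flatness of $\rho'$ fibrewise via Theorem 4.1.1 of \cite{duong-hai18}. Writing $F \in \{K, k\}$ for either fibre, let $I_F \subset G_F$ be the schematic image of $\rho'_F$, a closed subgroup of the finite $G_F$, and set $Q''_F := Q'_F \ti^{G'_F \to I_F} I_F$, so that $Q_F \simeq Q''_F \ti^{I_F \hookrightarrow G_F} G_F$. The twist by $Q''_F$ of the trivial sub-$I_F$-representation $F[G_F]^{I_F} \simeq F[I_F \backslash G_F] \hookrightarrow F[G_F]$ is a trivial $\co_{X_F}$-subsheaf of $\co_{Q_F}$ of rank $|I_F \backslash G_F|$, giving $\dim_F H^0(\co_{Q_F}) \ge |I_F \backslash G_F|$, since $H^0(\co_{X_F}) = F$ by Lemma \ref{03.07.2018--1}. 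But $(b)$ is equivalent to $(ii)$--$(iii)$ of Proposition \ref{29.11.2018--2}, forcing $H^0(\co_{Q_F}) = F$ on both fibres (using $H^0$-flatness for the generic one), so $I_F = G_F$; then $\rho'_F$ is surjective onto a finite target over the field $F$, hence faithfully flat.

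The main obstacle I expect is the fibrewise step of $(b) \Rightarrow (a)$: identifying the trivial sub-sheaf of rank $|I_F \backslash G_F|$ inside $\co_{Q_F}$ via the twist construction requires care, especially because $G'$ need not be finite (so there is no direct Tannakian framework for $Q'$ over $A$). Routing through the fibres, where $G_F$ is finite and the reduction passes through the finite closed subgroup $I_F$, bypasses this difficulty.
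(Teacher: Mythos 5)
Your plan decomposes the corollary as $(i)\Leftrightarrow(b)$ plus $(a)\Leftrightarrow(b)$, whereas the paper argues the cycle $(i)\Rightarrow(a)\Rightarrow(b)\Rightarrow(i)$. Your $(i)\Leftrightarrow(b)$ and $(b)\Rightarrow(a)$ steps are fine---in fact, your $(b)\Rightarrow(a)$ is genuinely different from the paper's corresponding $(i)\Rightarrow(a)$ (you do a fibrewise dimension count via the invariant subring $F[G_F]^{I_F}\hookrightarrow F[G_F]$, while the paper observes that $\te_Q\simeq\te_{Q'}\circ\rho'^\#$ with $\te_{Q'}$ faithful forces $\rho'^\#$ to be full, and then invokes Lemma \ref{21.03.2019--1}). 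Your route has the small cost of repeating, on each fibre, an argument that is done once and uniformly in the paper, and the small benefit of making the geometric input $H^0(\co_{Q_F})=F$ completely explicit.

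However, your $(a)\Rightarrow(b)$ step has a genuine gap. From $(a)$ applied to the reduction through $\iota:G^{\rm im}\hookrightarrow G$ you correctly get $G^{\rm im}=G$, but you then assert that ``$\rho$ is faithfully flat.'' This does not follow: $G^{\rm im}=G$ only says that $A[G]\to A[\Pi]$ is injective, i.e.\ that $\rho$ is schematically dominant, and schematic dominance does \emph{not} imply faithful flatness for flat affine group schemes over a d.v.r.\ (for instance, the map from a N\'eron blowup $\widetilde G\to G$ of a group scheme at a proper subgroup of the special fibre is an isomorphism generically, hence has dense schematic image, yet fails to be surjective on special fibres). The detour through $G^{\rm im}$ does not close the implication. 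The correct and immediate route---which is also what the paper does---is to note that the isomorphism $\widetilde X\times^\rho G\stackrel{\sim}{\to}Q$ of pointed $G$-torsors, which you have already established unconditionally, exhibits a pointed reduction of $Q$ to $\rho:\Pi\to G$ itself, and applying $(a)$ directly to \emph{this} reduction yields faithful flatness of $\rho$. (Note that the paper's Definition of a reduction of structure group does not require the source group scheme to be of finite type, so $\Pi$ is a legitimate choice of $G'$.) Once you make that replacement the argument is sound.

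One minor imprecision in the $(i)\Leftrightarrow(b)$ step: the equivalence ``$\rho^\#$ fully faithful $\Leftrightarrow$ $\rho$ faithfully flat'' is not given by \cite[Proposition 3.2.1(ii)]{duong-hai18} alone; that result gives only the implication from right to left. The converse, which is what you need for $(i)\Rightarrow(b)$, uses finiteness of $G$ through Lemma \ref{21.03.2019--1} (fullness of $\rho^\#$ yields $(\rho^\#V)^{\Pi}=V^G=A\cdot1$ for $V\in\g s(G)$, and similarly for $\g s_0(G)$).
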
  
\begin{proof} Proposition \ref{29.11.2018--2}-$(i)$ $\Rightarrow$ $(a)$.  Let  $\rho:G'\to G$ define a pointed reduction $Q'\to X$; it then follows that $\te_{Q'}\circ\rho^\#$ is isomorphic to $\te_Q$. Since $\te_{Q'}$ is faithful, we conclude that $\rho^\#:\rep AG\to\rep A{G'}$ is full and faithful. By  Lemma \ref{21.03.2019--1},    $\rho$ is faithfully flat. 

$(a)$ $\Rightarrow$ $(b)$.  
According to Theorem \ref{14.01.2019--1},  $\te_Q$ takes values in $\g T_X$; the existence of the point $q_0$ allows us to say that $\bullet|_{x_0}\circ\te_Q$ is isomorphic to the forgetful functor $\rep AG\to\modules A$ which gives us an arrow $\rho:\Pi\to G$ such that $\rho^\#\simeq \bullet|_{x_0}\circ\te_Q$ (as tensor functors). Hence, $\te_{\wt X}\circ\rho^\#\simeq \te_Q$, and we conclude, as in \cite[Proposition 2.9(c)]{nori}, that $Q$ has a pointed reduction to $\rho$. But $(a)$  forces $\rho$ to be faithfully flat. 

$(b)$ $\Rightarrow$ Proposition \ref{29.11.2018--2}-$(i)$. We know that $\te_Q\simeq \te_{\wt X}\circ\rho^\#$ in this case; but, by construction, $\te_{\wt X}$ is fully faithful  as is $\rho^\#$ (by \cite[3.2.1(ii), p.1121]{duong-hai18}, say). Therefore, $\te_Q$ is fully faithful. 
\end{proof}

\begin{rmk}
Let $Q\to X$ be as in the statement of Proposition \ref{29.11.2018--2}. The condition that $\te_Q:\repo AG\to\g T_X^\circ$ be full is  not enough to assure that $G$ is a faithfully flat quotient of $\Pi$ (so that this is missing in \cite[Theorem 7.1]{mehta-subramanian13}). 
 \end{rmk}

\section{Essentially finite vector bundles on the fibres: Reviewing  a theory of of Mehta and Subramanian }\label{06.03.2019--1} Let $X$ be an irreducible, projective and flat $A$-scheme with geometrically reduced fibres, and $x_0$ an $A$-point of $X$.   
(Recall that over a perfect field, an algebraic scheme is geometrically reduced if and only if it is reduced \ega{IV}{2}{4.6.1, p.68}. Note also that $X$ must be reduced.) 

The following result, which is one of the main points in \cite{mehta-subramanian13}, is essentially a consequence of the method employed by Deninger and Werner in proving  \cite[Theorem 17, p.573]{deninger-werner05} plus Section \ref{FGS}. Before putting forth its statement, let us recall the notion of an $F$-trivial vector bundle. 

If $M$ is a proper scheme over an unspecified \emph{perfect} field of positive characteristic, a vector bundle $E$ on $M$ is called  $F$-trivial \cite[Section 2,p. 144]{mehta-subramanian02}  if for a certain   $s\in\NN$, the pull-back of $E$ by a geometric Frobenius morphism 
${\rm Fr}^s:M^{(-s)}\to M$ is trivial.

\begin{thm}\label{14.05.2018--1}Suppose that $A$ is Henselian and Japanese, and that $k$ is perfect of characteristic $p>0$. 
Let $E$ be an $F$-trivial vector bundle on $X_k$. Then, there exists a proper and surjective morphism $\ps:Z\to X$
such that:  
\begin{enumerate}[(1)]\item The ring $B:=H^0(\co_Z)$ is a discrete valuation ring and is a finite extension of $A$. 
\item The canonical morphism $Z\to\spc B$ is flat and has geometrically reduced fibres. 
\item The $B$-scheme $Z$ has a $B$-point above $x_0$. 
\item Write $\ell$ for the residue field of $B$ and denote by \[\ps_0:Z\otu B\ell\aro X_k\] 
the morphism of $k$-schemes naturally induced by $\ps$.
Then  $\ps_0^*(E)$ is trivial.
\end{enumerate} 
\end{thm}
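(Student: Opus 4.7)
The strategy is to combine a Deninger--Werner-type construction---in its simplest incarnation, using only the projective embedding of $X$---with Theorem \ref{05.11.2018--1}. Even though the $s$-th Frobenius of $X_k$ does not lift to characteristic zero, it will be realized as the reduced special fibre of an honest finite flat cover of $X$. Concretely, pick a closed immersion $X \hookrightarrow \PP^N_A$ arising from a very ample line bundle, and let $\Phi_s \colon \PP^N_A \aro \PP^N_A$ be the $A$-morphism $[T_0 : \cdots : T_N] \mapsto [T_0^{p^s} : \cdots : T_N^{p^s}]$; on each standard affine chart this is the ring extension $A[y_1^{p^s}, \ldots, y_N^{p^s}] \hookrightarrow A[y_1, \ldots, y_N]$, which is finite and free of rank $p^{sN}$. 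Form the fibre product
\[
Y := X \tiu{\PP^N_A, \Phi_s} \PP^N_A, \qquad \ph \colon Y \aro X;
\]
then $\ph$ is finite, flat and surjective, and a fortiori $Y$ is flat over $A$.

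Next, analyse $Y_k$ using that $k$ is perfect. For any $f = \sum a_\alpha T^\alpha$ with coefficients in $k$ one has $f(T^{p^s}) = g(T)^{p^s}$ with $g = \sum a_\alpha^{1/p^s} T^\alpha$. Hence the ideal sheaf cutting out $Y_k$ inside $\PP^N_k$ is the $p^s$-th power of the ideal sheaf of a closed subscheme of $\PP^N_k$ canonically identified with the Frobenius twist $X_k^{(-s)}$, and the induced projection to $X_k$ is precisely the $s$-th geometric Frobenius ${\rm Fr}^s \colon X_k^{(-s)} \aro X_k$. In particular $(Y_k)_\red \simeq X_k^{(-s)}$, and the $F$-triviality hypothesis yields that $E|_{(Y_k)_\red} \simeq ({\rm Fr}^s)^*(E)$ is trivial.

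Now apply Theorem \ref{05.11.2018--1} to $\ph$: this produces $\ps \colon Z \aro X$ in $\g S^+(X,x_0)$ together with a factorization $Z \to Y \to X$, yielding at once properties (1)--(3) of the present statement (with $B = H^0(\co_Z)$ and residue field $\ell$). For property (4), the fibre $Z \otu B \ell$ is geometrically reduced over $\ell$, hence reduced; consequently the morphism $Z \otu B \ell \aro Y \otu A \ell$ induced by $Z \to Y$ factors through the reduced closed subscheme $(Y \otu A \ell)_\red$. Since $k$ is perfect, $(Y \otu A \ell)_\red$ arises by base change from $(Y_k)_\red$, so composing with $(Y_k)_\red \simeq X_k^{(-s)} \stackrel{{\rm Fr}^s}{\aro} X_k$ exhibits $\ps_0^*(E)$ as the pullback of $E|_{(Y_k)_\red}$, which is trivial.

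The main technical point is the Frobenius identification $(Y_k)_\red \simeq X_k^{(-s)}$ together with the fact that the projection recovers ${\rm Fr}^s$; this is where perfectness of $k$ is essential and where the Deninger--Werner idea enters. Everything else is either bookkeeping (base change along $A \to B$ and the factorization of reduced schemes through reduced subschemes) or a direct invocation of Theorem \ref{05.11.2018--1}, which in turn packages the reduced fibre theorem and absorbs the extra work needed to secure a d.v.r. base and a $C$-point above $x_0$.
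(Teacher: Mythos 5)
Your construction is correct, and in the equal-characteristic-zero case it reproduces the paper's proof verbatim: the paper also forms $Y=X\times_{\PP^n_A,\Phi}\PP^n_A$ for the coordinate-wise $p^s$-power map $\Phi$ on $\PP^n_A$, cites Deninger--Werner, Lemma~19, for the identification $Y_{k,\red}\simeq X_k^{(-s)}$ (with induced map $\mathrm{Fr}^s$), and then applies Theorem~\ref{05.11.2018--1}, concluding via the observation that the reduced fibre $Z\otimes_B\ell$ factors through $Y_{k,\red}$. One cosmetic inaccuracy in your write-up: the ideal of $Y_k$ is generated by the $p^s$-th powers of generators of $I(X_k^{(-s)})$, which is generally not the $p^s$-th power of that ideal; only the radicals agree, but that is all you need for $(Y_k)_\red\simeq X_k^{(-s)}$.

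Where you genuinely diverge from the paper is that the paper treats the cases ${\rm char}\,A=(0,p)$ and ${\rm char}\,A=(p,p)$ separately. In the $(p,p)$ case the authors work directly with the absolute Frobenius $F_X^s\colon X\to X$; since $F_A^s\colon A\to A$ need not be finite, they pass to the directed system $\Lambda$ of d.v.r.'s between $A^{p^s}$ and $A$, use limit arguments (\'EGA~IV$_3$, \S8) to descend $X$ and $F_X^s$ to a finitely generated $\g o\in\Lambda$, and then build $Z$ by base change and a separate point-lifting step. Your construction sidesteps this entirely: the $A$-morphism $\Phi_s$ on $\PP^N_A$ raises only coordinates, not coefficients, so it is finite and free of rank $p^{sN}$ over $A$ regardless of the characteristic of $A$, and its mod-$\pi$ reduction is the $k$-linear Frobenius on $\PP^N_k$ in either case. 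The Deninger--Werner identification of $(Y_k)_\red$ with $X_k^{(-s)}$ uses only perfectness of $k$ and is likewise characteristic-independent. So your single argument covers both cases and replaces the paper's \'EGA~IV$_3$ limit machinery with something more elementary. This is a genuine simplification; if you were unaware of the case split in the paper, it is worth noting that your version renders it unnecessary.
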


\begin{proof}
{\it The case of characteristic $(0,p)$.}  
We assume that $E$ is trivialized by ${\rm Fr}^s:X_k^{(-s)}\to X_k$. Let $X\to\PP_A^n$ be a closed immersion. 
Write $\Ph:\PP_A^n\to\PP^n_A$ for the evident $A$-morphism   lifting  the {\it $k$-linear} Frobenius morphisms ${\rm Fr}^s:\PP_k^n\to\PP_k^n$, and consider the cartesian diagram 
\[
\xymatrix{Y\ar[r]^\ph\ar@{^{(}->}[d]\ar@{}[dr]|\square&X\ar@{^{(}->}[d]\\\PP_A^n\ar[r]_{\Ph}&\PP_A^n.
}
\]
We note that $\Ph$ is a finite, flat and surjective morphism, so that $\ph:Y\to X$ is likewise; in particular this implies that $Y$ is $A$-flat. 

Base-changing by means of $A\to k$ we get the cartesian diagram 
\[
\xymatrix{
Y_k\ar@{}[dr]|\square\ar@{^{(}->}[d]\ar[r]^{\ph_k}&X_k\ar@{^{(}->}[d]
\\
\PP_k^n\ar[r]_{{\rm Fr}^s}&\PP^n_k 
}
\]
so that, since $X_k^{(-s)}$ is reduced, there exists a closed embedding \[
j:X_k^{(-s)}\aro Y_k
\] 
which {\it identifies $X_k^{(-s)}$ with $Y_{k,\,{\rm red}}$} and, in addition, produces a   factorisation of ${\rm Fr}^s:X_k^{(-s)}\to X_k$ like so 
\[
\xymatrix{X_k^{(-s)}\ar@{^{(}->}[dr]_j\ar@/^2pc/[drr]^{{\rm Fr}^s}&&\\
&Y_k\ar@{}[dr]|\square\ar@{^{(}->}[d]\ar[r]^{\ph_k}&X_k\ar@{^{(}->}[d]
\\
&\PP_k^n\ar[r]_{{\rm Fr}^s}&\PP^n_k .
}
\]
See Lemma 19 in \cite{deninger-werner05}.
Consequently, if $V$ is any \emph{reduced} scheme and $\al:V\to Y_k$ is a $\ZZ$-morphism, we   conclude that  $\al^*\ph_k^*E$ is trivial since $\al$ factors as 
\[V\aro X_k^{(-s)}\arou j  Y_k.\] 

A direct application of Theorem \ref{05.11.2018--1} now gives us a commutative diagram 
\[
\xymatrix{Z\ar[r]\ar[dr]_\ps&Y\ar[d]^\ph\\&X
}
\]
such that 
\begin{enumerate}[(i)]\item the morphism $\ps$ is surjective and proper. 
\item The ring $B:=H^0(\co_Z)$ is a discrete valuation rings and is a finite extension of $A$. 
\item The canonical morphism $Z\to\spc B$ is flat and has geometrically reduced fibres. 
\item The $B$-scheme $Z$ has a $B$-point above $x_0$. 
\end{enumerate}

In this situation, the proof is concluded by the observation preceding it. Indeed, if $\ell$ is the residue field of $B$ and
$\ps_0:Z\ot_B\ell\to X_k$
is the  arrow induced by $\ps$, we conclude that  $\ps_0^*(E)$ is trivial because $Z\ot_B\ell$ is reduced so that $\ps_0$ factors through 
$Z\ot_B\ell\to Y_k$.

\vspace{.2cm}\noindent\emph{Proof in the case of characteristic $(p,p)$.} The idea behind the proof is much simpler, but notation and technicalities hinder its handling.  

Suppose that $E\in\bb {VB}(X_k)$ becomes trivial after being pulled back by  $F_{X_k}^s:X_k\to X_k$.
Employing the commutative diagram of $\FF_p$-schemes 
\[
\xymatrix{X\ar[r]^{   F_X^s}\ar[d]&X\ar[d]
\\
\spc A\ar[r]_{F_A^s}&\spc A,
}
\]
we see that the morphism $(F^s_X)_0:X_k\to X_k$ induced on special fibres is none other than $F_{X_k}^s$. 
Hence, if  $F_A^s$ is a finite morphism, the choice  $Z=X$ and $\Ps=F_X^s$ is sufficient to fulfill all but condition (3) of the statement. But finiteness of $F_A$ is not always assured, and we choose to argue as in \ega{IV}{3}{\S8}. 

Let \[\La=\left\{\begin{array}{c} \text{$B$ is a d.v.r. dominated by $A$ and dominating $A^{p^s}$,  } \\\text{and such that ${\rm Frac}\,B$ is a finite extension of $K^{p^s}$} 
\end{array}\right\},
\] 
and endow it with the   partial order defined by domination of d.v.r.'s. 
As $A$ is Japanese,  for any $B\in\La$, the $A^{p^s}$-module $B$ is finite, and 
any element in $A$ belongs to some $B\in\La$ (see Theorem 10.2 and Exercise 11.2 in  \cite{matsumura}). Consequently, the limit $\lip_{B\in\La}\spc B$ in the category of $A$-schemes is simply $F_A^s:\spc A\to\spc A$.  
Employing \ega{IV}{3}{8.8.2-ii, p.28}, there exists $\g o\in\La$ and a $\g o$-scheme of {\it finite type} $Y$ 
fitting into a cartesian commutative diagram 
\begin{equation}\label{30.10.2018--2}
\xymatrix{
X\ar[r]^u\ar[d]\ar@{}[dr]|\square & Y\ar[d]
\\
\spc A\ar[r]&\spc \g o.
}
\end{equation}
In addition, if $u_{B}:X\to Y\ot_{\g o}B$ stands for the canonical morphism, an application of \ega{IV}{3}{8.2.5,p.9} shows that 
\[
(u_B):X\aro \lip_{B\ge \g o}Y\otu{\g o}B 
\]
is in fact an isomorphism of $\g o$-schemes. Also by loc.cit., the canonical morphism 
\[
X\otu{A,F_A^s}A \aro  \lip_{B\ge\g o } X\otu{A,F_A^s}B 
\]
is also an isomorphism. 
The relative Frobenius morphism 
\[
\g f: X\aro  X\otu{  A,F_A^s} A
\]
now gives rise, via \ega{IV}{3}{8.8.2-i,p.28}, to a $B\ge\g o$ and a morphism of $B$-schemes 
\[
f:Y\otu{\g o} B\aro X\otu {A,F_A^s}B, 
\]
such that $f\ot_{B}A$ corresponds to $\g f$. Hence, if $\ps$ stands for the composition of $f$ with the projection $X\ot_{A,F_A^s}B\to X$, we arrive at a commutative diagram 
\begin{equation}\label{30.10.2018--3}
\xymatrix{
Y\otu{\g o}B\ar[r]^-{\psi}\ar[d] & X\ar[d]
\\
\spc B\ar[r]&\spc A.
}
\end{equation}
In addition, 
\[
\psi\circ u_B={  F}_X^s.
\] 
Let us agree to write $Z=Y\ot_{\g o}B$. Then,   paralleling diagram \eqref{30.10.2018--2}, we have 
\begin{equation}\label{30.10.2018--4}
\xymatrix{
X\ar[r]^-{u_B}\ar[d]\ar@{}[dr]|\square & Z\ar[d]
\\
\spc A\ar[r]&\spc B.}
\end{equation}

\noindent\emph{Claim.} The following statements are true. 
\begin{enumerate}[(i)]
\item The morphism $\psi$ is finite and surjective.  
\item As a $B$-scheme, $Z$ is flat and proper. 
\item The geometric fibres of $Z$ over $B$ are reduced. 
\item The ring of global functions of $Z$ is $B$. 
\item Write  $\psi_0$ for the morphism induced from $\psi$ between special fibres. Then $\psi_0^*(E)$ is trivial. 
\end{enumerate}

\noindent{\it Proof.} 
(i) Surjectivity follows from $F_X^s=\ps\circ u_B$. 
Because $\g f$ is finite and the inclusion $B\to A$ is faithfully flat, we conclude that $f$ is finite \ega{IV}{2}{2.7.1, p.29}. Hence, $\ps$ is finite as $F_A^s:A\to B$ is finite.

(ii) We note that the morphism $\spc A\to\spc B$ in diagram \eqref{30.10.2018--4} is faithfully flat. Consequently, the claim is proved by employing \ega{IV}{3}{2.5.1, p. 22},  \ega{IV}{3}{2.7.1, p.29} and the fact that $X$ is flat and proper over $A$.

(iii) This is a direct consequence of diagram \eqref{30.10.2018--4} and the fact that being geometrically reduced is independent of the field extension \ega{IV}{2}{4.6.10, p.70}. 

(iv) This is a direct consequence of   flat base-change applied to diagram \eqref{30.10.2018--4} and $A=H^0(\co_X)$ (which follows from Lemma \ref{03.07.2018--2}, say). 

(v) Let now $\ell$ be the residue field of $B$; it is clear that $B\to A$ in fact induces an isomorphism $\ell\stackrel\sim\to k$. Since $X=Z\ot_BA$, it follows that  $(u_B)_0:X\ot_Ak\to Z\ot_B\ell$ is also an isomorphism. Hence, $\ps_0^*(E)$ is trivial because $\ps u_B=F_X^s$ so that  $(u_B)_0^*(\ps_0^*(E))$ is trivial. The claim is proved.

To finish the proof, we note that $\ps:Z\to X$ satisfies all the conditions in the statement of the Theorem except for the existence of a $B$-point above $x_0$. Now, the inverse image $\ps^{-1}(x_0)$ comes with a finite and surjective morphism to $\spc A$ (for surjectivity, see \ega{I}{}{3.5.2,p.115}). Hence, it is possible to find a {\it finite} extension of d.v.r.'s $B'\supset A$ and a point $\spc B'\to\Ps^{-1}(x_0)$ which then gives a commutative diagram  
\[
\xymatrix{
&Z\ar[d]
\\
\spc B'\ar[r]\ar[ru]^{z_0'} & \spc B}
\]
such that $z_0'$ is a $B'$-point of $X$ above   $x_0$ and the induced arrow $B\to B'$ is a finite extension. Consequently, letting $Z'$ be $Z\ot_BB'$ and $\ps':Z'\to X$  the composition $Z'\stackrel{\rm pr}\to Z\stackrel\ps\to X$, we see that $\ps':Z'\to X$ now satisfies all properties in the statement of the theorem. 
\end{proof}

As a consequence of Theorem \ref{14.05.2018--1}, we can give a simple alternative description of the vector bundles in  $\g T$ and, in doing so,  connect our theory to that of \cite{mehta-subramanian13}. See   Corollary \ref{23.11.2018--2}. 

\begin{thm}[Compare to {\cite[Lemma 3.1]{mehta-subramanian13}}]\label{MS_theorem}Suppose that $A$ is Henselian and Japanese,   that $k$ is perfect, and that $X$ is in addition normal.  
Let $\ce\in{\bf VB}(X)$ be such that $\ce_K$ and $\ce_k$ are essentially finite. Then, there exists a proper and surjective morphism \[\ze:X'\aro X\] such that 
 
\begin{enumerate}[(1)]
\item The ring $A':=H^0(\co_{X'})$ is a discrete valuation ring and a finite extension of $A$. 
\item The canonical morphism $X'\to\spc A'$ is flat and has geometrically reduced fibres. 
\item The $A'$-scheme $X'$ has an $A'$-point above $x_0$. 
\item The vector bundle $\ze^*(\ce)$ is trivial. 
\end{enumerate} 
\end{thm}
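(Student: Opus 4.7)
The plan is to trivialize $\ce_K$ and $\ce_k$ separately via finite covers of $X$, combine these into a single morphism $\ze:X'\to X$ inside the $\g S^+$-framework of Section \ref{FGS}, and invoke Lemma \ref{03.05.2018--1} to deduce triviality of $\ze^*(\ce)$ as a whole.

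For the generic fibre: since $X_K$ is normal, proper, reduced and pointed, Nori's theorem applied to the essentially finite bundle $\ce_K$ yields a finite $K$-group scheme $G_K$ and a pointed $G_K$-torsor $P_K\to X_K$ whose pullback trivializes $\ce_K$. Because $A$ is Japanese, normalizing $X$ in the total ring of fractions of $P_K$ produces a finite surjective $A$-morphism $P\to X$ recovering $P_K\to X_K$ on the generic fibre. Applying Theorem \ref{05.11.2018--1} to $P\to X$ one obtains $\ps_1:Z_1\to X$ in $\g S^+(X,x_0)$ that factors through $P$, so that $\ps_{1,K}^*(\ce_K)$ is trivial.

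For the special fibre, set $B_1=H^0(\co_{Z_1})$ with residue field $\ell_1$. The scheme $Z_{1,\ell_1}$ is geometrically reduced by the $\g S^+$ hypothesis, and $\ps_{1,\ell_1}^*(\ce_k)$ is essentially finite on $Z_{1,\ell_1}$, since pullback of essentially finite along a proper surjection of reduced proper schemes preserves essential finiteness. The key step will adapt the Frobenius-in-$\PP^n$ construction from the proof of Theorem \ref{14.05.2018--1}: after pulling back along a sufficiently iterated Frobenius on a projective embedding of $Z_1$, an essentially finite bundle on the reduced special fibre will become $F$-trivial, so that Theorem \ref{14.05.2018--1} applies and yields $\ps_2:Z_2\to Z_1$ in $\g S^+(Z_1,z_1)$ (where $z_1$ is the $B_1$-point of $Z_1$ above $x_0$) whose special-fibre restriction trivializes $\ps_{1,\ell_1}^*(\ce_k)$.

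Setting $\ze:=\ps_1\circ\ps_2:X':=Z_2\to X$ and $A':=H^0(\co_{Z_2})$, the composition of $\g S^+$-morphisms, together with a final pass through Theorem \ref{05.11.2018--1} and the valuative criterion of properness to secure points and flatness, inherits properties (1)--(3) of the statement. By construction both $\ze^*(\ce)_K$ and $\ze^*(\ce)_{\ell'}$ (with $\ell'$ the residue field of $A'$) are trivial, and Lemma \ref{03.05.2018--1}, applied over the d.v.r.\ $A'$, then forces $\ze^*(\ce)$ itself to be trivial, establishing (4). The main obstacle lies in the special-fibre step: adapting Theorem \ref{14.05.2018--1} to a bundle that is merely essentially finite rather than $F$-trivial demands a careful Frobenius iteration and repeated use of the reduced fibre theorem (Theorem \ref{reduced_fibre_theorem}), while simultaneously controlling the growing tower of finite extensions of $A$ so that $\g S^+$-membership is preserved at every stage---this is the ``technically intricate'' work flagged in the introduction.
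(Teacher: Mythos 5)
Your plan has the right overall skeleton (reduce to triviality of the restriction to each fibre separately, then invoke Lemma~\ref{03.05.2018--1}), and your reversal of order---generic fibre first, then special---is not in itself problematic. The genuine gap is in the special-fibre step: you assert that ``after pulling back along a sufficiently iterated Frobenius on a projective embedding of $Z_1$, an essentially finite bundle on the reduced special fibre will become $F$-trivial.'' This is false in general. An essentially finite vector bundle on a proper scheme $M$ over a perfect field of characteristic $p$ corresponds to a representation of a finite group scheme $G$, and $G$ fits in an extension $1\to G^0\to G\to\pi_0(G)\to 1$ with $G^0$ infinitesimal and $\pi_0(G)$ \'etale. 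Frobenius pullback eventually kills the contribution of $G^0$, but the \'etale quotient $\pi_0(G)$ survives all Frobenius twists: if $E$ is, say, a non-trivial rank-one bundle of order prime to $p$, then $F^{s*}E$ is never trivial. Hence no amount of Frobenius iteration will land you in the hypotheses of Theorem~\ref{14.05.2018--1}.

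The paper instead disposes of the \'etale part \emph{before} applying Theorem~\ref{14.05.2018--1}: one first picks a torsor under an \'etale group scheme $f\colon Y\to X_k$ with $f^*(\ce_k)$ $F$-trivial (this is where essential finiteness of $\ce_k$ is used, via Nori, \S3), and then lifts $f$ to an \'etale covering $\wt Y\to X$ of the whole $A$-scheme using Artin approximation---this is the essential place where Henselianity of $A$ enters and is entirely absent from your proposal. One also needs to check that $\wt Y$ inherits the standing hypotheses (flat, proper, geometrically reduced fibres, normal, irreducible). After replacing $X$ by $\wt Y$, the special-fibre restriction is genuinely $F$-trivial and Theorem~\ref{14.05.2018--1} applies as stated. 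Only afterwards does the paper handle the generic fibre: the restriction of the resulting bundle to the generic fibre of the intermediate $Z$ is essentially finite, is trivialized by a pointed finite torsor over the generic fibre, and that torsor is spread out over the d.v.r.\ by passing to the reduced scheme and taking integral closure (using that $A$ is universally Japanese and the relevant rings are Nagata); a final application of Theorem~\ref{05.11.2018--1} repackages everything into $\g S^+$ and Lemma~\ref{03.05.2018--1} concludes. So besides the Frobenius misconception, you should replace your special-fibre step by the \'etale-cover-plus-Artin-approximation reduction, after which your outline would be sound.
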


\begin{proof}As the vector bundle $\ce_k$ is essentially finite, it is possible to find a torsor under an etale group scheme  $f:Y\to X_k$, and a fortiori an etale covering, such that $f^*(\ce_k)$ is $F$-trivial see \cite[\S3]{nori}. In addition, $Y$ can be chosen to posses a $k$-rational point $y_0$ above $x_{0,k}$ (cf. loc.cit) and to satisfy $k=H^0(Y,\co_Y)$ \cite[II, Proposition 3, p. 87]{nori82}.  
Now, as $A$ is Henselian, Theorem 3.1 on p.30 of \cite{artin69} (the remarkable equivalence, Grothendieck's existence theorem and Artin approximation) allows us to find an  etale covering $\tilde f:\wt Y\to X$ lifting $Y\to X_k$. Looking at the finite and etale $A$-scheme $\tilde f^{-1}(x_0)$, applying one of the main properties  of a Henselian local ring \cite[VII.3, Proposition 3, p.76]{raynaud70}, and making use of the $k$-point $y_0:\spc k\to \tilde f^{-1}(x_0)$, we can find an $A$-point $\tilde y_0$ in $\wt Y$ above $x_0$. 
Note that $\wt Y$ inherits the following properties from $X$:
it is flat, proper, and has geometrically reduced fibres over $A$, and it is normal.  In addition, $\wt Y$ is connected and its normality then  assures  irreducibility. (Irreducibility might fail without   normality.)
 Therefore, $\wt Y$ satisfies all hypothesis imposed on $X$ in the beginning of the section and in the statement of the theorem. Note that, by construction,  the restriction of $\wt f^*\ce$ to ${\wt Y_k}$ is $F$-trivial. 

Because of the previous paragraph, we  suppose, so to lighten notation, that $\ce_k$ is $F$-trivial already on $X_k$. Let us apply Theorem \ref{14.05.2018--1} to the vector bundle $E=\ce_k$. Then, keeping with the notations of this theorem, 
we conclude that 
\[\ps^*(\ce)|_{Z\otu B\ell}\]
is trivial.  
Let $L={\rm Frac}(B)$.
Since $\ps^*(\ce)|_{Z\ot_BL}$ is an essentially finite vector bundle,  \cite[\S3]{nori} assures that we  can find a  torsor with  finite structural group 
\[\la^\circ:
Q^\circ\aro Z\otu BL
\] 
such that 
\[
\la^{\circ*}\left(\ps^*(\ce)|_{Z\ot_BL}\right)
\]
is trivial. In addition, $Q^\circ$ might be chosen to come with two extra properties, which are: 
\begin{itemize}\item Letting $z_0:\spc B\to Z$ be the point above $x_0$ mentioned in Theorem \ref{14.05.2018--1}, 
$Q^\circ$ has an $L$-rational point $q_0^\circ$  above $z_{0,L}$. (This is not used in what follows.)
\item The ring of global functions of $Q^\circ$ is $L$, see Proposition 3 of Chapter II, p.87, in \cite{nori82}. In particular $Q^\circ$ is connected. 
\end{itemize}

Let $Q^\Box\to Q^\circ$ be the associated reduced scheme and write 
\[
\la^\Box:Q^\Box\aro Z\otu B L
\] 
for the induced morphism. Clearly $Q^\Box$ is connected, $\la^\Box$ is surjective, finite and
\[
\la^{\Box*}\left(\ps^*(\ce)|_{Z\ot_BL}\right)
\] 
is trivial.

Let $\mu:Z\ot_BL\to Z$ be the natural immersion and write 
\[
\la:Q\aro Z
\]
for the integral closure of the quasi-coherent $\co_Z$-algebra $(\mu\la^\Box)_*(\co_{Q^\Box})$, see \ega{II}{}{6.3, 116ff} or \cite[Tag 035H]{stacks}. 
This means, according to \ega{II}{}{6.3.4, p.117}, that for each affine open subset $V\subset Z$, 
the ring $\co_Q(\la^{-1}V)$ is the integral closure of $\co_{Z}(V)$ inside $\co_{  Q^\Box}((\mu \la^{\Box})^{-1}(V))$.
In particular, $Q$ is   flat as a $B$-scheme and 
\[
Q\otu {B} L= Q^\Box.  
\]
Since $A$ is universally Japanese \ega{IV}{2}{7.7.2, p. 212}, for each affine and open subset $V$ of $Z$, the ring $\co_Z(V)$ is universally Japanese and noetherian, and hence is a Nagata ring \cite[Tag 032R]{stacks}. As a consequence of \cite[Tag 03GH]{stacks} and the fact that $ Q^\Box$ is reduced, 
 we see that $\la$ is a \emph{finite} morphism.  Because $\la(Q)$ contains $Z\ot_BL$ (recall that $\la^\Box$ is surjective), we conclude that $\la$ is surjective. 
Finally, both $(\ps\la)^*(\ce)|_{Q\ot_B\ell}$ and  $(\ps\la)^*(\ce)|_{Q\ot_BL}$ are trivial. 

Theorem \ref{05.11.2018--1} can be applied to $\ps\la:Q\to X$, and this allows us to find a commutative diagram of schemes 
\[
\xymatrix{R\ar[r]  \ar[dr]_\te& Q\ar[d]^{ \ps\la}  
\\
&X, } 
\]
such that: 
\begin{itemize}\item The morphism $\te$ is proper and surjective. 
\item The ring $C:=H^0(\co_R)$ is a discrete valuation ring   and a finite extension of $A$. 
\item The canonical morphism $R\to \spc C$ is flat and has geometrically  reduced fibres.
\item The $C$-scheme $R$ has a $C$-point above $x_0$.  
\end{itemize}
As the natural arrow $\spc C\to\spc B$ sends the generic, respectively special, point to the generic, respectively special,  point, triviality of  $(\ps\la)^*(\ce)|_{Q\ot_B\ell}$ and  $( \ps\la)^*(\ce)|_{Q\ot_BL}$ allows us to conclude that  the restrictions of $\te^*\ce$ to the generic and special fibres of $R$ over $C$ are trivial.  Because   $R$ is $H^0$-flat over $C$ (it has reduced fibres), we conclude by employing Lemma \ref{03.05.2018--1}  that $\te^*\ce$ is trivial. 
\end{proof}

For the sake of discussion, let us make the following: 
\begin{dfn}\label{23.11.2018--3} The Mehta-Subramanian category of $X$, denote it $\bb{MS}(X)$, is the full subcategory of $\bb{VB}(X)$ whose objects are 
\[
\{\ce\in\bb{VB}(X)\,:\,\text{$\ce_k$ and $\ce_K$ are essentially finite}\}.
\]
\end{dfn}

An immediate consequence of \cite[Theorem 1]{antei-mehta11} (or \cite[Theorem I]{tonini-zhang17}) and Theorem \ref{MS_theorem} is then: 

\begin{cor}\label{23.11.2018--2} Suppose that $A$ is Henselian and Japanese,   that $k$ is perfect, and that in addition to the hypothesis in the beginning of the section, $X_k$ and $X_K$ are  normal. Then $\g T^\circ=\bb{MS}(X)$. 
\end{cor}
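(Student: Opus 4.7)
The plan is to prove the two inclusions $\g T^\circ_X\subseteq\bb{MS}(X)$ and $\bb{MS}(X)\subseteq\g T^\circ_X$ separately. Each inclusion relies on one principal input: the first uses the characterization of essentially finite vector bundles on a proper normal variety as those trivialized by a proper surjective morphism, due to Antei--Mehta \cite{antei-mehta11} (or Tonini--Zhang \cite{tonini-zhang17}); the second is a direct application of Theorem \ref{MS_theorem} established above.

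For $\g T^\circ_X\subseteq\bb{MS}(X)$, I would take $\ce\in\g T^\circ_X$. By Definition \ref{30.10.2018--1} there is some $\ph:Y\to X$ in $\g S(X,x_0)$ with $\ph^*\ce$ trivial relative to $A$. Restricting to the fibres $Y_k$ and $Y_K$, both $\ph_k^*(\ce_k)$ and $\ph_K^*(\ce_K)$ are trivial. Since $\ph_k$ and $\ph_K$ are proper and surjective morphisms onto the normal schemes $X_k$ and $X_K$, \cite[Theorem 1]{antei-mehta11} (equivalently \cite[Theorem I]{tonini-zhang17}) yields $\ce_k\in\bb{EF}(X_k)$ and $\ce_K\in\bb{EF}(X_K)$, so $\ce\in\bb{MS}(X)$.

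For $\bb{MS}(X)\subseteq\g T^\circ_X$, I would apply Theorem \ref{MS_theorem} to any $\ce\in\bb{MS}(X)$, producing $\ze:X'\to X$ with the listed properties (1)--(4). The triviality of $\ze^*\ce$ in (4) gives $\ce\in\g T_\ze^\circ$ as soon as $\ze\in\g S(X,x_0)$, so it remains to check the axioms of Definition \ref{26.06.2018--1}. Properness and surjectivity ($\g S1$) and the $H^0(\co_{X'})$-point above $x_0$ ($\g S3$) are supplied directly by (1) and (3), and (2) delivers $\g S4$. The only axiom needing a short computation is $\g S2$: since $X'\to\spc A'$ is flat with geometrically reduced fibres, it is $H^0$-flat over $A'$ by convention (b) of the Notations, so for any $A$-module $M$ one has
\[
H^0(\co_{X'}\ot_A M)=H^0(\co_{X'}\ot_{A'}(A'\ot_A M))=A'\ot_{A'}(A'\ot_A M)=A'\ot_A M,
\]
which is exact in $M$ since $A'$, being a finitely generated torsion-free module over the DVR $A$, is free. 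This both establishes $H^0$-flatness over $A$ and identifies $H^0(\co_{X'})$ with $A'$, placing $\ze$ in $\g S^+(X,x_0)\subseteq\g S(X,x_0)$ and concluding the argument.

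The substantive content of the result is carried entirely by Theorem \ref{MS_theorem} and by the external characterization of essentially finite bundles; the remaining work is bookkeeping around Definition \ref{26.06.2018--1}, with the only genuine verification being the transitivity of $H^0$-flatness along the finite extension $A\subset A'$. I anticipate no further obstacle.
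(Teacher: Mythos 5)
Your proof follows the same two-pronged strategy as the paper's: Antei--Mehta/Tonini--Zhang for $\g T^\circ_X\subseteq\bb{MS}(X)$ and Theorem \ref{MS_theorem} for the reverse inclusion, with the verification that the resulting $\ze$ lies in $\g S^+(X,x_0)$ (which you spell out in more detail than the paper does, and your check of $\g S2$ via the tensor-product juggling is fine).

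There is, however, one genuine omission: Theorem \ref{MS_theorem} carries the hypothesis that $X$ itself is normal, whereas the Corollary only assumes $X_k$ and $X_K$ are normal. Before you can invoke the theorem you must bridge this, and the bridge is not content-free: it uses flatness of $X$ over $A$ together with EGA IV$_2$, 6.5.4, which says that normality of both fibres of a flat morphism over a DVR forces normality of the total space. Without this step the application of Theorem \ref{MS_theorem} is not licensed by the stated hypotheses. The paper makes this observation explicitly; you should add it.
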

\begin{proof}Let $\ce\in\bb{MS}(X)$. Since $X$ is flat over $A$, normality of $X_k$ {\it and} $X_K$ implies normality of $X$ \ega{IV}{2}{6.5.4, p.143} and  Theorem \ref{MS_theorem} may be applied. Consequently, $\ce$ belongs to $\g T^\circ$ as $\ze$ in Theorem \ref{MS_theorem} lies in $\g S^+(X,x_0)$.  Conversely, let $\ce$ be a vector bundle in $\g T$. Since $X_k$ and $X_K$ are normal, we know  that $\ce_k$ and $\ce_K$ are essentially finite (due to \cite[Theorem 1]{antei-mehta11} or \cite[Theorem I]{tonini-zhang17}).  
\end{proof}

\begin{rmk}\label{28.11.2018--1}Since a point $x\in X$ above the generic fibre specializes to a point on the special fibre, normality of  $X$ is equivalent to normality of $X$ {\it on the points of $X_k$}. Of course,  $X_k$ can easily fail to be normal even when $X$ is regular.
\end{rmk}

\section{Further applications to the theory of torsors}\label{14.01.2019--3}
We assume that $A$ is Henselian, Japanese and has a perfect residue field. 
Let $X$ be an irreducible, projective and flat $A$-scheme with geometrically reduced fibres, and $x_0$ an $A$-point of $X$.


\begin{thm}[{compare to \cite[Corollary 3.2]{mehta-subramanian13}}]\label{25.06.2018--1} 
Let us add to the assumptions made at the start of this section that $X$ is normal.
Let $G\in({\bf FGSch}/A)$ be quasi-finite over $A$,  
\[
 Q\aro X
\]
be a $G$-torsor, and  $q_0$ an $A$-point of $Q$ above $x_0$. 

\begin{enumerate}[(1)]\item There exists $\ze:X'\to X$ in $\g S^+(X,x_0)$ (see Definition \ref{26.06.2018--1}) and $\ce\in\g T_\ze^\circ$ such that $\te_Q:\rep AG\to\bb{coh}(X)$ takes values in $\langle\ce\,;\,\g T_\ze\rangle_\ot$ (and a fortiori in $\g T_X$). 
\item 
There  exists a finite $H\in(\bb{FGSch}/A)$, a morphism $\rho:H\to G$, 
an $H$-torsor $R\to X$ and an $A$-point $r_0:\spc A\to R$ together with an isomorphism of torsors  
\[
R\ti^{H}G\arou\sim Q
\]
  sending the $A$-point  $(r_0,e)$ of $R\ti^{H}G$ to  $q_0$. In addition, it is possible to choose $\rho$ to be a closed immersion. 
Put differently, $Q$ has a reduction of structure group to a finite group scheme.

\item If $H^0(Q,\co_Q)\simeq A$, then $G$ is in fact finite.  
\end{enumerate}

\end{thm}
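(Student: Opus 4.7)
My plan is to derive the three conclusions in order, leveraging the trivialization result Theorem~\ref{MS_theorem} and the finiteness result Theorem~\ref{25.03.2019--1}.

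For (1), I would pick a faithful representation $V$ of $G$ on a free $A$-module of finite rank---such a $V$ exists because $G$ is affine, flat and of finite type over the DVR $A$, so any sufficiently large finitely generated $G$-stable submodule of $\co(G)$ (automatically torsion-free, hence free) cogenerates the Hopf algebra and defines a closed immersion $G\hookrightarrow\GL(V)$. Set $\ce:=\te_Q(V)$; this is a vector bundle on $X$, whose fibres $\ce_k$ and $\ce_K$ are associated to the finite torsors $Q_k\to X_k$ and $Q_K\to X_K$ under the \emph{finite} group schemes $G_k$ and $G_K$ (finiteness on the fibres being precisely the quasi-finiteness of $G$). These fibres are therefore finite---a fortiori essentially finite---vector bundles on the geometrically reduced, connected fibres $X_k$ and $X_K$ (connectedness of $X_k$ follows from $H^0$-flatness together with $H^0(X,\co_X)=A$). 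Since $X$ is normal, Theorem~\ref{MS_theorem} now furnishes $\zeta\colon X'\to X$ in $\g S^+(X,x_0)$ trivializing $\ce$, placing $\ce$ in $\g T_\zeta^\circ$. Faithfulness of $V$ implies, by the standard Tannakian fact, that every $M\in\rep{A}{G}$ is a subquotient of some $\bb T^{a,b}V$; applying the exact tensor functor $\te_Q$ (exact because $Q\to X$ is faithfully flat) places $\te_Q(M)$ in $\langle\ce\,;\,\g T_\zeta\rangle_\ot$.

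For (2), the $A$-point $q_0$ induces a tensor isomorphism between $(\bullet|_{x_0})\circ\te_Q$ and the forgetful functor $\rep{A}{G}\to\modules{A}$; Tannakian duality for $\g T_X$ then produces a morphism $\rho\colon\Pi(X,x_0)\to G$ which, thanks to (1), factors through the natural epimorphism $\Pi(X,x_0)\twoheadrightarrow\mm{Gal}'(\ce,\g T_X,x_0)$. By Theorem~\ref{25.03.2019--1} the latter group is a \emph{finite} flat $A$-group scheme, so decomposing the factored morphism as $\iota\circ q$ with $q$ faithfully flat and $\iota\colon H\hookrightarrow G$ a closed immersion identifies $H$ as a finite closed subgroup scheme of $G$. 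The desired $H$-torsor $R\to X$ is obtained by pushing the universal pointed $\mm{Gal}'(\ce,\g T_X,x_0)$-torsor (built from $\wt X$ as described before Corollary~\ref{27.03.2019--1}) forward along $q$, and inherits a canonical $A$-point $r_0$ above $x_0$ from that of $\wt X$. The isomorphism $R\ti^HG\simeq Q$ sending $(r_0,e)$ to $q_0$ is dictated by the Tannakian correspondence between pointed $G$-torsors on $X$ whose associated sheaves lie in $\g T_X$ and fibre-functor-preserving tensor functors out of $\rep{A}{G}$.

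For (3), I would compute $H^0(Q,\co_Q)$ by Tannakian means. Since $\pi\colon Q\to X$ is affine, $H^0(Q,\co_Q)=H^0(X,\pi_*\co_Q)$, and $\pi_*\co_Q$ corresponds, as an ind-object of $\g T_X$, to the left regular representation $A[G]_{\rm left}=\varinjlim V_\alpha$ with $V_\alpha$ running over the finite-rank $G$-stable submodules of $A[G]$ (compare the identity $\te_Q(k[G]_{\rm left})\simeq\co_{Q_k}$ used in the proof of Proposition~\ref{29.11.2018--2}); that is, $\pi_*\co_Q=\varinjlim\te_Q(V_\alpha)$. Passing the Tannakian identification $H^0(X,\cf)=(\cf|_{x_0})^{\Pi(X,x_0)}$, valid for $\cf\in\g T_X$, through this filtered colimit---legitimate because $\Pi(X,x_0)$ acts on $A[G]_{\rm left}$ through the finite quotient $H$ of (2), and because $H^0$ commutes with filtered colimits of quasi-coherent sheaves on the noetherian proper $X$---yields
\[
H^0(Q,\co_Q)\;=\;(A[G]_{\rm left})^{H}\;=\;\co(H\backslash G).
\]
The hypothesis $H^0(Q,\co_Q)\simeq A$ thus forces $H\backslash G=\spc A$; combined with the closed immersion $\iota$ and the identity section of $G$ this forces $H=G$, so $G$ is finite. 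The most delicate step of the whole argument is this passage to ind-representations: identifying $\pi_*\co_Q$ with $\varinjlim\te_Q(V_\alpha)$ inside the ind-completion of $\g T_X$ and carrying the Tannakian computation of $H^0$ through the filtered colimit. Once that identification is in place, the remaining assertions follow formally from Theorem~\ref{MS_theorem}, Theorem~\ref{25.03.2019--1}, and the standard Tannakian torsor-dictionary.
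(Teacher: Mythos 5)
Your treatment of (1) has a genuine gap: from the fact that every $M\in\rep AG$ is a subquotient of some $\bb T^{a,b}V$ and the exactness of $\te_Q$, you conclude directly that $\te_Q(M)\in\langle\ce\,;\,\g T_\ze\rangle_\ot$. But $\langle\ce\,;\,\g T_\ze\rangle_\ot$ is by definition the category of subquotients \emph{inside} the abelian category $\g T_\ze$: for $\te_Q(M)$ to land there, the intermediate sheaves $\te_Q(M')\subset\bb T^{a,b}\ce$ and $\te_Q(M'')\subset\te_Q(M')$ produced by exactness in $\bb{coh}(X)$ must themselves belong to $\g T_\ze$, i.e.\ must become trivial relative to $A$ after pullback along $\ze$. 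This is far from automatic---$\g T_\ze$ is not closed under arbitrary subsheaves of its objects in $\bb{coh}(X)$ (think of a line subbundle $\co(-1)\subset\co^{\op 2}$ on $\PP^1$)---and verifying it is exactly the substantive work of the paper's proof. There one checks triviality first on the two fibres using the determinantal Lemma~\ref{24.05.2018--1} (a quotient of a trivial bundle whose determinant is trivialized by a proper surjective morphism is itself trivial), glues the two fibres with Lemma~\ref{03.05.2018--1} using $H^0$-flatness of $X'$, handles subobjects through duals and ``special'' monomorphisms, and only then treats a general $M\in\rep AG$ via an equivariant presentation by locally free representations together with Lemma~\ref{27.04.2017--3}. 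Without this chain of reductions, the claim that ``the image of $\te_Q$ lies in $\g T_\ze$'' is precisely what remains to be proved.

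Parts (2) and (3) are essentially the paper's argument and are sound, modulo the dependence of (2) on a corrected (1). Two minor remarks: in (2) the paper first takes $H=\mm{Gal}'(\ce,\g T_X,x_0)$ itself with $R$ the universal torsor and only afterwards factors the morphism to extract the closed-immersion refinement, whereas you pass directly to the image; both routes work, since the image of a morphism of flat affine group schemes over $A$ is again flat (its ring of functions is a torsion-free $A$-submodule). In (3), you reach the key identity $H^0(Q,\co_Q)\simeq (A[G]_{\rm left})^H$ by a Tannakian filtered-colimit computation, while the paper argues more directly through the description of $\co(R\ti^H G)$ as $H$-equivariant morphisms and the locally-free structure of $A[G]$ over $A[G]^H$; the latter also supplies the rank argument that makes your final step (``this forces $H=G$'') precise: $A[G]$ is locally free over $A[G]^H\simeq A$ of rank equal to that of $A[H]$, and the surjection $A[G]\to A[H]$ between free $A$-modules of the same rank must be an isomorphism.
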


\begin{proof}As is well-known (by adapting the proofs in \cite[3.3]{waterhouse79}) the facts that   $G$ is of finite type and $A$ is a d.v.r. allow us to find $E\in\repo AG$ such that the resulting morphism $G\to\bb{GL}(E)$ is a closed immersion, or, in the terminology of \cite[\S3]{duong-hai-dos_santos18}, $E$ is a faithful representation. We then write $\ce=\te_Q(E)$ and note that since $Q_k\to X_k$ and $Q_K\to X_K$ are {\it finite} principal bundles, the vector bundles $\ce_k=\te_{Q_k}(E_k)$ and $\ce_K=\te_{Q_K}(E_K)$ are in fact essentially finite \cite[Proposition 3.8, p.38]{nori}.  
 Let $\ze:X'\to X$, $A'$, and $x_0'$ be as in Theorem \ref{MS_theorem} when applied to $\ce$. 
Note that $\ze:X'\to X$ is     in $\g S^+(X,x_0)$ and that $\ce\in\g T_\ze$.

\bigskip\noindent (1).
We write $Q'$ for the $G$-torsor $X'\ti_XQ$. We know that for each $M\in\rep AG$, the coherent $\co_{X'}$-module  $\ze^*\te_Q(M)$ is isomorphic to $\te_{Q'}(M)$. Because each $\te_Q(\bb T^{a,b}E)$ belongs to $\langle\ce\,;\,\g T_\ze\rangle_\ot$, we conclude that     each  $\te_{Q'}(\bb T^{a,b} E)$ is trivial. 

Let $T\in\repo AG$ be such that $\te_Q(T)$ belongs to $\langle\ce\,;\,\g T_\ze\rangle_\ot$. If $V\in\repo AG$ is the target of an epimorphism $T\to V$,  exactness of $\te_{Q'}$ produces an epimorphism  
\[\co_{X'}^{\oplus r}\simeq\te_{Q'}(T)\aro\te_{Q'}(V).\] 
Since $\te_{Q'}(V)\ot_A k$ and $\te_{Q'}(V)\ot_A K$ become trivial when pulled back via  $Q'\ot_A k\to X'\ot_Ak$ and $Q'\ot_A K\to X'\ot_AK$, we may apply Lemma \ref{24.05.2018--1}  to  conclude that $\te_{Q'}(V)\ot k$ and $\te_{Q'}(V)\ot K$ are trivial.  Because $X'$ is $H^0$-flat over $A$, Lemma \ref{03.05.2018--1} says  that $\te_{Q'}(V)$ is equally trivial and hence that $\te_Q(V)\in\langle\ce\,;\,\g T_\ze\rangle_\ot$.
By the same argument, now applied to $\check T$, we conclude that $\te_Q(W)$ belongs to $\langle\ce\,;\,\g T_\ze\rangle_\ot$ once $W\to T$ is a special subobject (we employ \cite[Definition 10]{dos_santos09}). 

Now we know that for every  $U\in\repo AG$, there exists a special monomorphism $V\to \bb T^{a,b}E$ and an epimorphism $V\to U$ \cite[Proposition 12]{dos_santos09}. From what was proved above, $\te_Q(U)$ belongs to $\langle\ce\,;\,\g T_\ze\rangle_\ot$. 

To end the proof, let $M\in\rep AG$ be arbitrary. Using \cite[Corollary 2.2, p.41]{serre68}, we find an equivariant presentation \[0\aro U_1\aro U_0\aro M\aro0\]
with $U_0$ and $U_1$ in $\repo AG$.  An application of  Lemma \ref{27.04.2017--3} (and the exactness of the functor $\te_Q$) assures that $\te_Q(M)$ belongs to $\g T_\ze$, and hence to $\langle\ce\,;\,\g T_\ze\rangle_\ot$.  

\bigskip\noindent  (2). We pick $\ze$ and $\ce$ as in item (1) and define  $H={\rm Gal}'(\ce;\g T_X ,x_0)$ so that $\bullet|_{x_0}:\langle\ce\,;\,\g T_X \rangle_\ot\to\rep AH$ is an equivalence of tensor categories. Because of Theorem \ref{25.03.2019--1}, $H$ is a \emph{finite} group scheme over $A$. Using the $A$-point $q_0$ of $Q$ above $x_0$, the functor $(\bullet|_{x_0})\circ\te_Q:\rep AG\to\modules A$ is naturally isomorphic to the forgetful functor and hence 
we derive a morphism of group schemes 
\[
\rho:H\aro G
\]
such that $(\bullet|_{x_0})\circ \te_Q\simeq \rho^\#$.
Now, similarly to \cite{nori} (see \S2 and the argument on p.39), there exists a $H$-torsor $R\to X$ with an $A$-point $r_0$ above $x_0$ such that $\te_{R}\circ(\bullet|_{x_0})\simeq{\rm id}$ and $(\bullet|_{x_0})\circ\te_{R}\simeq{\rm id}$ as tensor functors. (The quasi-coherent  $\co_X$-                                                              algebra of the torsor $R$ corresponds, in the category $\rep A{H}$, to $A[H]_{\rm left}$, see \cite[Definition, p. 32]{nori}.) Now 
\[
\begin{split}\te_Q&\simeq\te_{R}\circ(\bullet|_{x_0})\circ\te_Q\\&\simeq \te_R\circ\rho^\#
\end{split}
\]
which shows, just as in \cite[Propsoition 2.9(c), p. 34]{nori}, that $Q\simeq R\ti^{H}G$. 

To verify the last statement, we note $\rho$ can be decomposed into $H\to H'\stackrel{\si}\to G$, where $\si$ is a closed immersion and $H'$ is finite. This being so, we have $Q\simeq (R\ti^{H}H')\ti^{H'}G$.

\noindent(3). Let $H\in(\bb{FGSch}/A)$ be  finite, $Q\to X$ be an $H$-torsor, $H\to G$ be a closed immersion,  and  $R\ti^HG\simeq Q$ be an isomorphism  as in (2). 
Now, employing the arrow \[R\ti G\aro G,\quad (r,g)\longmapsto g^{-1}\]
we obtain an injection $\mm{Mor}(G,\mathds A^1)\to\mm{Mor}(R\ti G,\mathds A^1)$ and hence an injection 
\[
\left\{\text{$H$-equivariant $G\to\mathds A^1$}\right\}\aro \left\{\text{$H$-equivariant $R\ti G\to\mathds A^1$}\right\}. 
\]
Since the right-hand-side above is simply the ring of     functions of  $R\ti^HG\simeq Q$, the hypothesis then forces $A=A[G]^H$. But $A[G]$ is a finite and locally free $A[G]^H$-module whose rank equals that of $A[H]$ (see III.2.4 of \cite{demazure-gabriel70}). It is then easy to see that the closed immersion $H\to G$ is an isomorphism. 
\end{proof}

We shall now gather some consequences of Theorem \ref{25.06.2018--1} and in doing so connect it to \cite[Chapter II]{nori82} and \cite{antei-emsalem-gasbarri18}. {\it Notations are as in the statement of Theorem \ref{25.06.2018--1}.} 

Let us abbreviate $\Pi=\Pi(X,x_0)$.
As already explained in Section \ref{15.03.2019--2} (see the discussion preceding Corollary  \ref{27.03.2019--1}), there exists a $\Pi$-torsor 
\[
\wt X\aro X
\] 
with an $A$-point $\tilde x_0$ above $x_0$ such that $\te_{\wt X}\circ(\bullet|_{x_0})\simeq{\rm id}$ and $(\bullet|_{x_0})\circ\te_{\wt X}\simeq{\rm id}$ as tensor functors. Recall that for each homomorphism $\rho:\Pi\to G$, the fpqc sheaf of the contracted product  $\wt X\ti^\rho G$  (see \cite[III.4.3.2, p.368]{demazure-gabriel70} or \cite[Part I, 5.14]{jantzen87}) has the following description: it  is the quotient of $\wt X\ti G$ by the right action of $\Pi$ defined, on the level of points, by 
\begin{equation}\label{12.03.2019--2}
(\tilde x,g)\po\ga=(\tilde x\ga,\rho(\ga)^{-1}g).
\end{equation}
Let us write \[\chi_\rho:\wt X\ti G\aro \wt X\ti^\rho G\] for the canonical quotient  morphism. Then,  the arrow
\[
(\mm{pr}_{\wt X}, \chi_\rho ):\wt X\ti G\aro \wt X\ti_X(\wt X\ti^\rho G)
\]
is an isomorphism of $G$-torsors over $\wt X$     (see \cite[III.4.3.1]{demazure-gabriel70} or \cite[Part 1, 5.14(3)]{jantzen87}). 
In addition, if we let $\Pi$ act on (the right of) $\wt X\ti G$ as implied by \eqref{12.03.2019--2} and on $\wt X\ti_X(\wt X\ti^\rho G)$ by the action solely on $\wt X$, then
$(\mm{pr}_{\wt X}, \chi_\rho )$ is $\Pi$-equivariant, as a simple verification shows. 

Now we note that  $A=H^0(\co_{\wt X})$  because $A[\Pi]_{\rm left}$, which corresponds to the quasi-coherent $\co_{X}$-module $\co_{\wt X}$, has only constant invariants and because of \cite[Exercise II.1.11, p.67]{hartshorne77}. Since $G$ is affine, any morphism of schemes $\wt X\to G$ must factor through the structural morphism $\wt X\to\spc A$ \ega{I}{}{2.2.4,p.99}; 
we conclude that any arrow $\be: \wt X\ti G\to\wt X\ti G$
between $G$-torsors   must be of the form $(\tilde x,g)\mapsto(\tilde x,cg)$, where $c\in G(A)$. 
This being so, if $\be$ in addition fixes the $A$-point $(\tilde x_0,e)$, we see that $\be=\mm{id}$. 
This has the following pleasing consequence (implicit in \cite[Proposition 3.11]{nori}): 

\begin{lem}\label{12.03.2019--1}Let $G\in\bb{FGSch}/A$ and let 
 $\rho:\Pi\to G$ and $\si:\Pi\to G$ be arrows of group schemes over $A$. 
Let  $\al:\wt X\ti^\rho G\to\wt X\ti^\si G$ be a morphism of $G$-torsors  sending  $\chi_\rho(\tilde x_0,e)$ to $\chi_\si(\tilde x_0,e)$. Then $\rho=\si$ and $\al=\mm{id}$. 
\end{lem}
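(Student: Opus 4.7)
The plan is to pull $\alpha$ back along the $\Pi$-torsor $\wt X \to X$, use the canonical trivialization $\wt X \times G \stackrel{\sim}{\to} \wt X \times_X (\wt X \times^\rho G)$ (and its analogue for $\sigma$) recalled just before the lemma, and then invoke the rigidity observation quoted in the paragraph above: any $G$-torsor endomorphism of $\wt X \times G$ over $\wt X$ has the form $(\tilde x, g) \mapsto (\tilde x, c g)$ for some $c \in G(A)$. The point-preservation hypothesis will pin down $c$, and a comparison of the two $\Pi$-actions will then pin down $\rho$ versus $\sigma$.

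More precisely, since $\alpha$ is a morphism of $X$-schemes, the base change $\mm{id}_{\wt X} \times \alpha$ is automatically $\Pi$-equivariant for the actions of $\Pi$ on source and target that act solely on the first $\wt X$-factor. Transporting it through the two trivializations $(\mm{pr}_{\wt X}, \chi_\rho)$ and $(\mm{pr}_{\wt X}, \chi_\sigma)$ produces a $G$-torsor morphism
\[
\tilde\alpha: \wt X \times G \aro \wt X \times G
\]
which is $\Pi$-equivariant when the source is endowed with the action $\cdot_\rho$ coming from \eqref{12.03.2019--2} and the target with $\cdot_\sigma$. By rigidity, $\tilde\alpha(\tilde x, g) = (\tilde x, c g)$ with $c \in G(A)$, and a straightforward unwinding of the trivializations at the base point, together with the assumption $\alpha(\chi_\rho(\tilde x_0, e)) = \chi_\sigma(\tilde x_0, e)$, shows $\tilde\alpha(\tilde x_0, e) = (\tilde x_0, e)$, forcing $c = e$ and $\tilde\alpha = \mm{id}$. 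Writing out $\Pi$-equivariance then yields, for each test point $(\gamma, \tilde x, g)$,
\[
(\tilde x\gamma, \rho(\gamma)^{-1} g) \;=\; \tilde\alpha\bigl((\tilde x, g) \cdot_\rho \gamma\bigr) \;=\; \tilde\alpha(\tilde x, g) \cdot_\sigma \gamma \;=\; (\tilde x\gamma, \sigma(\gamma)^{-1} g),
\]
and specialising $g = e$ gives $\rho(\gamma) = \sigma(\gamma)$, i.e.\ $\rho = \sigma$. Once $\rho = \sigma$, source and target of $\alpha$ coincide, and $\alpha_{\wt X}$ identifies with $\tilde\alpha = \mm{id}$ via the common trivialization, so faithfully flat descent along $\wt X \to X$ (fpqc because $\Pi$ is flat and affine over $A$) concludes $\alpha = \mm{id}$.

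The step requiring the most care is verifying that the canonical trivialization $\wt X \times G \stackrel{\sim}{\to} \wt X \times_X (\wt X \times^\rho G)$ intertwines the diagonal action $\cdot_\rho$ on the left with the first-factor action on the right; without this, the $\Pi$-equivariance attributed to $\tilde\alpha$ would be unjustified and the argument collapses. This is in essence the defining property of the contracted product (the points $(\tilde x, g)$ and $(\tilde x\gamma, \rho(\gamma)^{-1} g)$ have the same image under $\chi_\rho$), but it must be written out to justify both the transfer of equivariance and the final application of descent. Beyond this bookkeeping, the proof is routine.
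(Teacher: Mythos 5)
Your proof is correct and follows essentially the same route as the paper's: pull back along $\wt X \to X$, transport through the two trivializations $(\mm{pr},\chi_\rho)$ and $(\mm{pr},\chi_\sigma)$, invoke the rigidity observation to get $\wt\al = \mm{id}$, read off $\rho = \sigma$ from $\Pi$-equivariance, and descend. The $\Pi$-equivariance of the trivialization that you flag as the delicate point is exactly what the paper records in the paragraph preceding the lemma ("as a simple verification shows"), so you are matching the intended argument step for step.
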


\begin{proof}We consider the following commutative diagram 
\[
\xymatrix{
\wt X\ti G\ar[d]^\sim_{(\mm{pr},\chi_\rho)}\ar[rr]^{\wt \al}&&  \wt X\ti G\ar[d]_\sim^{(\mm{pr},\chi_\si)}
\\
\wt X\ti_X(\wt X\ti^\rho G)\ar[rr]_{\mm{id}\ti\al}&& \wt X\ti_X(\wt X\ti^\si G).
}
\]
Since $\al$ takes $\chi_\rho(\tilde x_0,e )$ to $\chi_\si(\tilde x_0,e)$, we conclude that $\wt\al(\tilde x_0,e)=(\tilde x_0,e)$  and hence, by the above discussion, $\wt \al$ is the identity. Because $(\mm{pr},\chi_\rho)$ is $\Pi$-equivariant (for the actions explained above), we conclude that $\wt \al={\rm id}$ is $\Pi$-equivariant, and this is only possible when $\rho=\si$. Since   $\mm{id}\ti\al=\mm{id}$, fpqc descent \ega{IV}{2}{2.7.1,p.29} assures that $\al=\mm{id}$. 

\end{proof}

 Let $G\in(\bb{FGSch}/A)$ be {\it quasi-finite} and consider the category $\bb{Tors}_*(G)$ whose 
\begin{enumerate}\item [\textbf{objects}] are couples $(Q,q_0)$ consisting of a $G$-torsor over $X$ and an $A$-point $q_0$ of $Q$ above $x_0$, and
\item[\textbf{arrows}] are  isomorphisms of $G$-torsors which preserve the $A$-rational point.  
\end{enumerate}

\begin{thm}\label{13.03.2019--2}We maintain the above notations.   
\begin{enumerate}[(1)]
\item[(1)]  Write $|\bb{Tors}_*(G)|$ for the set of isomorphism classes in $\bb{Tors}_*(G)$ and   $\wt X[\rho]$ for the   class of the couple $(\wt X\ti^\rho G,\chi_\rho(\tilde x_0,e))$. Then the map \[
\hh{}{\Pi}{G}\aro  |\bb{Tors}_*(G)|,\qquad\rho\longmapsto\wt X[\rho]
\]
is bijective. 

\item[(1')] The category $\bb{Tors}_*(G)$ is discrete \cite[p.11]{maclane98}. 

\item[(2)] For each $(Q,q_0)$ in $\bb{Tors}_*(G)$, there exists a \emph{unique} generalized morphism of torsors $\chi:\wt X\to Q$ taking $\tilde x_0$ to $q_0$. 
\end{enumerate}

\end{thm}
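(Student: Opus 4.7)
The plan is to derive the three parts from Lemma \ref{12.03.2019--1} together with the reduction provided by Theorem \ref{25.06.2018--1}, proving them in the order (1), (1'), (2). I write $\om_G:\rep AG\to\modules A$ for the forgetful functor and recall from Definition \ref{19.07.2018--3} that $\bullet|_{x_0}$ defines an equivalence $\g T_X\arou\sim\rep A{\Pi}$ with quasi-inverse $\te_{\wt X}$.

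For part (1), injectivity is immediate from Lemma \ref{12.03.2019--1}: an equality $\wt X[\rho]=\wt X[\si]$ yields by definition a pointed isomorphism $\wt X\ti^\rho G\arou\sim\wt X\ti^\si G$, which forces $\rho=\si$. For surjectivity, fix $(Q,q_0)\in\bb{Tors}_*(G)$. Theorem \ref{25.06.2018--1}(1) asserts that $\te_Q:\rep AG\to\bb{coh}(X)$ actually takes values in $\g T_X$, while the $A$-point $q_0$ supplies a tensor natural isomorphism $(\bullet|_{x_0})\circ\te_Q\simeq\om_G$. Tannakian duality therefore produces a morphism $\rho:\Pi\to G$ with $\rho^\#\simeq(\bullet|_{x_0})\circ\te_Q$. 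Composing with $\te_{\wt X}$ yields $\te_{\wt X}\circ\rho^\#\simeq\te_Q$, and the standard argument of \cite[Proposition 2.9(c)]{nori} (already invoked in the proof of Theorem \ref{25.06.2018--1}) converts this identification of twist functors into an isomorphism of $G$-torsors $\wt X\ti^\rho G\arou\sim Q$. Tracking the base-point data through the Tannakian dictionary shows that $\chi_\rho(\tilde x_0,e)$ is carried to $q_0$, so $[(Q,q_0)]=\wt X[\rho]$.

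For (1'), take any morphism $\al:(Q,q_0)\to(Q',q'_0)$ in $\bb{Tors}_*(G)$ and, via (1), pointed isomorphisms $\iota:\wt X\ti^\rho G\arou\sim Q$ and $\iota':\wt X\ti^{\rho'}G\arou\sim Q'$. The composite $(\iota')^{-1}\al\iota:\wt X\ti^\rho G\to\wt X\ti^{\rho'}G$ is a pointed morphism, so Lemma \ref{12.03.2019--1} forces $\rho=\rho'$ and identifies it with the identity; hence $\al$ is determined uniquely by its source and target (in particular all automorphism groups are trivial), which is what discreteness means here. For (2), existence of $\chi$ is obtained by composing the canonical generalized morphism $\wt X\to\wt X\ti^\rho G$, $\tilde x\mapsto\chi_\rho(\tilde x,e)$ (which covers $\rho$) with the isomorphism of (1). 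For uniqueness, any $\chi':\wt X\to Q$ sending $\tilde x_0$ to $q_0$ and covering some $\rho':\Pi\to G$ gives, via the $\Pi$-invariant assignment $(\tilde x,g)\mapsto\chi'(\tilde x)g$, a pointed isomorphism $\wt X\ti^{\rho'}G\arou\sim Q$; the two resulting pointed isomorphisms are then compared through (1'), so Lemma \ref{12.03.2019--1} forces $\rho=\rho'$ and $\chi=\chi'$.

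The main obstacle I anticipate is in the surjectivity step of (1): the Tannakian reconstruction of $\rho$ is formal, but one must verify carefully that the base-point data (the natural isomorphism $(\bullet|_{x_0})\circ\te_Q\simeq\om_G$ provided by $q_0$, together with the choice $\te_{\wt X}$ of quasi-inverse pinned at $\tilde x_0$) propagates coherently so that the final isomorphism $\wt X\ti^\rho G\simeq Q$ is genuinely an arrow of $\bb{Tors}_*(G)$. Once that base-point compatibility is in place, (1'), (2), and all the uniqueness assertions are formal consequences of Lemma \ref{12.03.2019--1}.
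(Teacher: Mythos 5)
Your proposal follows essentially the same route as the paper: surjectivity of (1) via Theorem \ref{25.06.2018--1}(1), the $A$-point $q_0$, and the Tannakian correspondence reduced to \cite[Proposition 2.9(c)]{nori}; injectivity of (1), together with (1') and the uniqueness in (2), all from Lemma \ref{12.03.2019--1}. The base-point compatibility concern you flag is real but is handled exactly as the paper does, by appealing to the same argument of Nori's Proposition 2.9(c).
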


\begin{proof}(1) We first establish surjectivity: the argument is identical to the one in \cite[Proposition 3.11]{nori}.  
From  Theorem \ref{25.06.2018--1}-(1), the functor $\te_Q$ takes values in $\g T_X^{\rm tan}$.  Using the $A$-point $q_0$ of $Q$ above $x_0$, we see that  $(\bullet|_{x_0})\circ\te_Q:\rep AG\to\modules A$ is naturally isomorphic to the forgetful functor and, since $\bullet|_{x_0}:\g T_X^{\rm tan}\stackrel\sim\to\rep A{\Pi}$, 
we derive a morphism of group schemes 
\[
\rho:\Pi \aro G
\]
such that $(\bullet|_{x_0})\circ \te_Q\simeq \rho^\#$.
Now 
\[
\begin{split}\te_Q&\simeq\te_{\wt X}\circ(\bullet|_{x_0})\circ\te_Q\\&\simeq \te_{\wt X}\circ\rho^\#
\end{split}
\]
which shows, just as in \cite[Propsoition 2.9(c), p. 34]{nori}, that $Q\simeq {\wt X}\ti^{\rho}G$. 
Injectivity is a direct consequence of Lemma \ref{12.03.2019--1}. 

 (1') This is a direct consequence of (1) and Lemma \ref{12.03.2019--1}.

 (2) This is standard, but we run the argument for the convenience of the reader. 
Let $\tau:\wt X\to Q$ and $\tau':\wt X\to Q$ be morphisms as in the statement covering morphisms $\rho:\Pi\to G$ and $\rho':\Pi\to G$ respectively. We therefore  deduce arrows   $\ov\tau:\wt X\ti^\rho G\to  Q$ and $\ov\tau':\wt X\ti^{\rho'}G\to Q$ in $\bb{Tors}_*(G)$ such that 
\[
\wt X\arou{(\mm{id},e)} \wt X\ti G\arou{\chi_\rho}\wt X\ti^\rho G\arou{\ov\tau} Q
\] 
and 
\[
\wt X\arou{(\mm{id},e)} \wt X\ti G\arou{\chi_{\rho'}}\wt X\ti^{\rho'} G\arou{\ov\tau'} Q
\]
are respectively $\tau$ and $\tau'$. 
 This produces an arrow $\al:\wt X\ti^\rho G\to \wt X\ti^{\rho'}G$ in $\bb{Tors}_*(G)$. According to Lemma \ref{12.03.2019--1}, $\rho=\rho'$ and $\al=\mm{id}$ so that $\tau=\tau'$.
\end{proof}

This allows us to compare $\Pi$ to the fundamental group scheme introduced in \cite{antei-emsalem-gasbarri18}. 
Recall that these authors show the existence of a pro-quasi-finite (see Definition \ref{25.02.2019--1})
flat group scheme $\Pi^\star$, a $\Pi^\star$-torsor $X^\star\to X$ and an $A$-point $x^\star_0$ above $x_0$ enjoying the following universal property. If $G$ is flat and quasi-finite, $Q\to X$ is a $G$-torsor with a point $q_0$ above $x_0$, then there exists a {\it unique} generalized morphism of torsors $X^\star\to Q$ taking $x_0^\star$ to $q_0$. (See the paragraph after the proof of Theorem 5.2 in \cite{antei-emsalem-gasbarri18}.) From Theorem \ref{13.03.2019--2} we have:

\begin{cor}\label{13.03.2019--1}Under the assumptions of Theorem \ref{25.06.2018--1}, there exists a unique generalized isomorphism of torsors $\wt X\to X^\star$ taking $\tilde x_0$ to $x_0^\star$. In particular, $\Pi\simeq \Pi^\star$.  \qed
\end{cor}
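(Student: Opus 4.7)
The plan is to construct mutually inverse generalized morphisms between $\wt X$ and $X^\star$ that carry the marked point to the marked point, using Theorem~\ref{13.03.2019--2}(2) in one direction and the universal property of $X^\star$ recalled just before the statement in the other. The induced homomorphisms between structure groups will then automatically be mutually inverse, yielding $\Pi\simeq \Pi^\star$. The uniqueness of the resulting generalized isomorphism follows level-by-level from the uniqueness clauses of the two universal properties.

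Concretely, since $\Pi^\star$ is pro-quasi-finite, I would first write $\Pi^\star \simeq \lip_i G_i$ with each $G_i$ a flat quasi-finite group scheme over $A$, so that $X^\star$ appears as a compatible inverse system of pointed $G_i$-torsors $(X^\star_i, x^\star_{0,i})$, where $X^\star_i := X^\star\times^{\Pi^\star} G_i$. Applying Theorem~\ref{13.03.2019--2}(2) to each $(X^\star_i, x^\star_{0,i})$ produces a unique generalized morphism $\tau_i\colon \wt X \to X^\star_i$ sending $\tilde x_0$ to $x^\star_{0,i}$; uniqueness forces compatibility in $i$, so the $\tau_i$ assemble into a generalized morphism $\tau\colon \wt X\to X^\star$ taking $\tilde x_0$ to $x^\star_0$ and covering a homomorphism $\rho\colon\Pi\to \Pi^\star$.

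In the opposite direction, Theorem~\ref{25.03.2019--1} presents $\Pi$ as $\lip_j H_j$ with $H_j$ finite, and the corresponding pointed quotients $(\wt X_j, \tilde x_{0,j})$ are in particular pointed quasi-finite torsors over $X$. The universal property of $(X^\star, x^\star_0)$ then yields unique generalized morphisms $\si_j\colon X^\star \to \wt X_j$ taking $x^\star_0$ to $\tilde x_{0,j}$, compatible in $j$ by uniqueness, which assemble into $\si\colon X^\star \to \wt X$ covering some $\si_*\colon \Pi^\star\to\Pi$.

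To conclude, I would verify that $\si\circ\tau = \id_{\wt X}$ and $\tau\circ\si = \id_{X^\star}$ by projecting to each finite level and invoking the uniqueness clauses of the two universal properties. For the first identity: projecting $\si\circ\tau$ to $\wt X_j$ gives a pointed generalized endomorphism from $\wt X$ to $\wt X_j$; by Theorem~\ref{13.03.2019--2}(2) it must agree with the canonical projection $\wt X\to \wt X_j$, so passing to the limit gives $\si\circ\tau=\id_{\wt X}$. The second identity follows symmetrically from the uniqueness attached to the quasi-finite quotients $X^\star_i$ of $X^\star$. The main point I expect to require care is the bookkeeping needed to interpret a ``generalized morphism'' between the pro-schemes $\wt X$ and $X^\star$ and to check that the homomorphisms $\rho$ and $\si_*$ are indeed well-defined at the level of the pro-systems and mutually inverse; everything else is a straightforward application of the universal properties already in place.
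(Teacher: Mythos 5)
Your argument is correct and follows the same logic the paper leaves implicit (the paper marks the corollary with \textqed{} because it views it as an immediate consequence of the two mirrored universal properties, that of $\wt X$ from Theorem~\ref{13.03.2019--2}(2) and that of $X^\star$ from \cite{antei-emsalem-gasbarri18}). Your level-by-level bookkeeping through the pro-quasi-finite quotients of $\Pi^\star$ and the pro-finite quotients of $\Pi$, together with the observation that uniqueness forces compatibility along the two inverse systems, is exactly the detail that is being suppressed; it is a sound rendering of the standard Yoneda-type argument that two objects corepresenting the same functor are uniquely isomorphic.

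\newcommand{\textqed}{$\square$}
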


In particular, Theorem \ref{25.03.2019--1} says that $\Pi^\star$ is in fact pro-finite. 


\end{document}